\title{Rolling carpet strategy to reduce mosquito populations in two-dimensional space}
\author[1]{Lu\'{\i}s Almeida}
\author[2]{Alexis L\'{e}culier}
\author[3]{Nga Nguyen}
\author[3]{Nicolas Vauchelet}
\affil[1]{LPSM - Sorbonne Universit\'{e} and Universit\'{e} Paris-Cit\'{e}, France \texttt{(luis.almeida@cnrs.fr)}}
\affil[2]{Universit\'e de Bordeaux - Institut de Mathématiques de Bordeaux, UMR 5251, F-33405, Talence, France \texttt{(alexis.leculier@u-bordeaux.fr)}}
\affil[3]{Université Sorbonne Paris Nord, Laboratoire Analyse, Géométrie et Applications, LAGA, CNRS UMR 7539,
F-93430, Villetaneuse, France \texttt{(vauchelet@math.univ-paris13.fr)}}
\begin{document}                     
\maketitle

\begin{abstract}
Mosquitoes are vectors of numerous diseases; a strategy to fight the spread of these diseases is to control the vector population. In this article, we focus on the use of the sterile insect technique. Starting from a reaction-diffusion system, we show the existence of 'forced' traveling waves obtained by translating the intervention zone at constant speed. This result is proved in a two-dimensional space by using the radial symmetry. 

\end{abstract}
\textbf{Keywords~:} Reaction-diffusion system, traveling waves, population dynamics. 
\\
\textbf{AMS Subject Classification~:} 35K57, 92D25, 35C07.  \\

%\tableofcontents

%%%%%%%%%%%%%%%%%%%%%%%%%%%%%%%%%%%%%
\newtheorem{theorem}{Theorem}
\newtheorem{corollary}{Corollary}
\newtheorem{lemma}{Lemma}
\newtheorem{definition}{Definition}
\newtheorem{proposition}{Proposition}
\newtheorem{example}{Example}
\newtheorem{definition-proposition}{Definition-Proposition}
\newtheorem*{notation}{Notation}
\newtheorem*{remark}{Remark}

%%%%%%%%%%%%%%%%%%%%%%%%%%%%%%%%%%%%%%%%%%%%
\newcommand{\RR}{\mathbb{R}}
\newcommand{\barE}{\overline{E}}
\newcommand{\barM}{\overline{M}}
\newcommand{\barF}{\overline{F}}
%%%%%%%%%%%%%%%%%%%%%%%%%%%%%%%%%%%%%%%%%%%% 

\section{Introduction}

Many species of mosquitoes are vectors for numerous diseases, for instance \textit{Aedes} mosquitoes are vectors for chikungunya, zika and dengue. Without an efficient vaccine, reducing the vector population remains the key to controlling the spread of such diseases. The sterile insect technique (SIT) and the closely related Incompatible Insect Technique (IIT) aim to reduce the size of the insect population by releasing massively sterile males (for SIT, or incompatible males in the case of IIT). Although this technique was introduced to eradicate other insect species (see e.g. \cite{Enkerlin2017}), it has recently been successfully implemented in the field to control mosquito populations \cite{Bellini2013,CAP, GAT, ZHE}. In addition, many field trials of SIT are underway; see the review article \cite{Bouyer2024} where the issue of the scalability of the SIT for mosquitoes is also addressed.
Although ongoing efforts have reduced the cost of producing and releasing sterilized males \cite{Maiga}, the implementation of this strategy in large areas remains a challenge.

In this paper, we investigate from a mathematical point of view a general strategy to extend the SIT in a large spatial domain. This strategy consists in moving the release region to extend the free mosquito area and is called a \textit{"rolling carpet"} strategy.
A numerical investigation of this idea had been proposed in \cite{SEI} (see also \cite{ANG}).
Recently, the mathematical analysis of this technique has been considered for a simple scalar case in one dimension in \cite{ALM3}, and its optimization in \cite{bressan, AlmLecNadPriv}. A more general system has also been considered in \cite{LEC2023}.
The main objective of this paper is to generalize the mathematical analysis of the \textit{"rolling carpet"} strategy for a complete system of mosquito dynamics and in the two-dimensional case.

More precisely, we consider the following system that models the dynamics of a mosquito population with several stages~: $E$ density of the aquatic phase, $F$ density of fertilized females, $M$ density of males, $M_s$ density of sterilized males.
These quantities depend on the time variable $t>0$ and on the space variable $x\in\RR^2$. The following mathematical system governing the dynamics of these quantities was proposed in \cite{STR19} without spatial diffusion (see e.g. \cite{ANG,ALM1} for its natural extension incorporating the spatial dependency),
\begin{subequations}\label{syst}
\begin{align}\label{syst:E}
&\partial_t E = b F \left(1 - \frac{E}{K}\right) - (\mu_E+\nu_E) E \\
\label{syst:M}
&\partial_t M - D \Delta M = (1-\rho) \nu_E E - \mu_M M  \\
\label{syst:F}
&\partial_t F - D \Delta F = \rho \nu_E E \frac{M}{M+\gamma_s M_s}\Gamma(M+\gamma_s M_s) - \mu_F F  \\
\label{syst:Ms}
&\partial_t M_s - D \Delta M_s = \Lambda  - \mu_s M_s 
\end{align}
\end{subequations}
In this model, $b$ is the birth rate (oviposition rate), $\mu_E$, $\mu_M$, $\mu_F$ and $\mu_s$ are the death rates for the aquatic phase, the males, the fertilized females, and the sterile males, respectively. The carrying capacity is denoted $K$, the emergence rate $\nu_E$, the sex ratio $\rho$ and  the diffusion coefficient $D$. Since we will consider the setting $D$ is constant, up to a rescaling, and to simplify the computations and notations, we will always assume in the following that $D=1$.The release function of sterilized males is denoted $\Lambda(t,x)$.
The quantity $\frac{M}{M+\gamma_s M_s}\Gamma(M)$ models the probability that a female mates with a fertile male. The parameter $\gamma_s$ models the competitiveness of sterile males, and several choices for the function $\Gamma(M)$ have been proposed in \cite{AgboAlmeidaCoron2,STR19} to model the difficulty in finding a partner when the density is low; such function has been taken to introduce an Allee effect which stabilizes the extinction equilibrium.
In this work we will consider the two following choices for the function $\Gamma$~:
\begin{equation}\label{eq:Gamma}
\Gamma(M) = 1  \ \text{ (monostable case) }, \qquad \Gamma(M) = 1-e^{-\gamma M} \ \text{ (bistable case).}
\end{equation}
Obviously in the monostable case, there is no Allee effect. We mention that another possible modeling choice to include Allee effect has also been proposed in \cite{MULTERER2019}. 
System \eqref{syst} is complemented with some given initial data in $L^\infty(\RR^2)$
$$
E(x,t=0) = E^0(x),\quad M(x,t=0) = M^0(x), \quad F(x,t=0) = F^0(x), \quad M_s(x,t=0) = M_s^0(x).
$$

The function $\Lambda$ is the release function and from a mathematical point of view it may be seen as a control function. 
When spatial dependency is neglected the question of controlling the dynamics of the population of mosquitoes thanks to the release of sterile males has been addressed by many authors. For instance in \cite{BIDI2025,AgboAlmeidaCoron2,bidiJOTA2025, bliman2024,cristofaro2024}, the stabilization by feedback control of such a system has been studied. 
Optimizing the release function, when the spatial dependency is neglected, has also attracted the attention of several mathematicians, see e.g. \cite{THOME2010,ADPV_MBE2019,ADPV2022,BLI}. A few recent works also consider the control of mosquito populations in spatially dependent settings (see \cite{ABPV2025} for the Wolbacchia case and \cite{AgboAlmeidaCoron2} for the SIT setting).

To implement the \textit{"rolling carpet"} strategy, the place where the sterilized males are released is moved at a constant speed in order to generate a "forced" wave of eradication of the mosquito population. 
Then, system \eqref{syst} is considered with $\Lambda(t,x) = \mathcal{H}(|x|-ct)$ for a positive constant speed $c>0$, for a given positive release function $\mathcal{H}$. 
It has been proved in \cite{ALM3} that for a simple scalar reaction-diffusion equation and in one dimension, there exists a speed $c>0$ and a profile $\mathcal{H}$ such that the population goes to extinction. 
In the monostable situation and still in one dimension, a similar study has been performed in \cite{LEC2023} for system \eqref{syst}. A first difficulty to extend this result to the bistable situation is to find a condition under which there is a natural propagation of mosquitoes without sterile males. Indeed it is clear that when $\Gamma\equiv 0$ there is no need to release sterilized males to eradicate the population. A second technical difficulty lies in the construction of the forced wave of eradication for the whole system. We will use a comparison principle and we will construct sub-solutions and super-solutions for this system.
We will assume the spherical symmetry to investigate the two-dimensional case.

% !!!!! A COMPLETER : Préciser la notion de solution : sup des sous-solution ? [Volpert?] !!!!!

The outline of the paper is the following. In the next section, we state our main results. The first results concern the model without sterile male for which we provide a condition on the parameter to guarantee the invasion of the species (Proposition \ref{prop:TWMs0}). Then we present in Theorem \ref{MainTheorem} the result concerning the existence of a wave of eradication of the species by acting on a moving frame. Section \ref{sec:Ms0} is devoted to the proof of Proposition \ref{prop:TWMs0}. The proof of Theorem \ref{MainTheorem} is divided into three parts :  in Section \ref{sec:sub-solution}, we construct a sub-solution; in Section \ref{sec:super-solution}, we construct a super-solution; finally, the proof is concluded using these super- and sub-solutions in Section \ref{sec:final}. In an Appendix, we propose an analysis of the steady states as stated in Lemma \ref{lem:stat}.

\section{Main results}

In this section, we state our main results.
Existence of a unique solution of system \ref{syst} may be obtained by using the classical theory of nonlinear parabolic systems and has already been obtained for such a system in \cite{ANG}.
The aim of this work is to prove the existence of 'forced' traveling waves to eradicate the population of insects in a two-dimensional domain.
Before presenting the existence of such 'forced' traveling waves, it is important to state some results for the system without sterile males.

\subsection{Presentation of the main results : Case without sterile males}

When there are no sterile males (i.e. $M_s=0$), the system simplifies into
\begin{subequations}\label{syst0}
\begin{align}\label{syst0:E}
&\partial_t E = b F \left(1 - \frac{E}{K}\right) - (\mu_E+\nu_E) E \\
\label{syst0:M}
&\partial_t M - \Delta M = (1-\rho) \nu_E E - \mu_M M  \\
\label{syst0:F}
&\partial_t F - \Delta F = \rho \nu_E E \Gamma(M) - \mu_F F. 
\end{align}
\end{subequations}
Let us introduce the basic reproduction number $\mathcal{N}$ and a parameter $\zeta$ defined by 
\begin{equation}\label{def:Nzeta}
    \mathcal{N} := \frac{b\rho\nu_E}{\mu_F (\nu_E+\mu_E)}, \quad 
\zeta := \frac{\mu_M }{(1-\rho) \nu_E \gamma K}.
\end{equation}
We will always assume that $\mathcal{N}>1$, meaning that the population does not go to extinction naturally.

The dynamical system without diffusion corresponding to \eqref{syst0} reads
\begin{subequations}\label{systODE}
\begin{align}\label{systODE:E}
&E' = b F \left(1 - \frac{E}{K}\right) - (\mu_E+\nu_E) E \\
\label{systODE:M}
&M' = (1-\rho) \nu_E E - \mu_M M  \\
\label{systODE:F}
&F' = \rho \nu_E E \Gamma(M) - \mu_F F. 
\end{align}
\end{subequations}
The following Lemma provides some properties of the equilibria of this ODE system and their stability and justifies the terminology 'monostable' for $\Gamma\equiv 1$ and 'bistable' for $\Gamma(M) = 1-e^{-\gamma M}$ (see \eqref{eq:Gamma}).
\begin{lemma}\label{lem:stat}
Assume $\mathcal{N}>1$. 
\begin{itemize}
\item[(i)] In the monostable case $\Gamma(M)=1$, system \eqref{systODE} has two steady states : The extinction equilibrium $(0,0,0)$ which is unstable and a positive equilibrium $(E^*,M^*,F^*)$ which is stable and is given by
$$
F^* = \frac{K(\mu_E+\nu_E)}{b}(\mathcal{N}-1), \quad E^* = \frac{\mu_F}{\rho \nu_E} F^*,\quad 
M^* = \frac{(1-\rho) \mu_F}{\rho \mu_M} F^*.
$$
\item[(ii)] In the bistable case $\Gamma(M) = 1-e^{-\gamma M}$,
let $\zeta_c$ be the unique positive solution of the equation 
\begin{equation}\label{def:zetac}
 \frac{1+\sqrt{4\zeta_c\mathcal{N}+1}}{2\mathcal{N}} = 1-\zeta_c \ln \left(\frac{2\zeta_c \mathcal{N} + 1 + \sqrt{4\zeta_c \mathcal{N} + 1}}{2\zeta_c \mathcal{N}} \right).
\end{equation} 
If $\zeta<\zeta_c$, or equivalently $\gamma>\gamma_c:=\dfrac{\mu_M}{(1-\rho)\nu_E \zeta_c K}$, then the system \eqref{systODE} admits three constant stationary solutions~: The extinction equilibrium $(0,0,0)$ and two positive equilibria $(E^*_1,M^*_1,F^*_1) \ll (E^*,M^*,F^*)$.
Moreover the extinction equilibrium and the positive equilibrium $(E^*,M^*,F^*)$ are locally asymptotically stable, whereas the equilibrium $(E^*_1,M^*_1,F^*_1)$ is unstable.
\end{itemize}
\end{lemma}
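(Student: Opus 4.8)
The plan is to reduce the equilibrium problem to a single scalar equation and then read off both the number of equilibria and their stability from an associated one-dimensional profile. Setting the right-hand sides of \eqref{systODE} to zero, the second and third equations give $M=\frac{(1-\rho)\nu_E}{\mu_M}E$ and $F=\frac{\rho\nu_E}{\mu_F}E\,\Gamma(M)$. Substituting into the first equation and factoring out $E$ shows that either $E=0$ (the extinction equilibrium, always present) or, writing $u:=E/K\in(0,1)$ and using the definition of $\mathcal N$,
\begin{equation*}
h(u):=\Gamma\!\big(M(u)\big)(1-u)=\frac1{\mathcal N},\qquad M(u)=\frac{(1-\rho)\nu_E K}{\mu_M}\,u .
\end{equation*}
In the monostable case $\Gamma\equiv1$ this reads $1-u=1/\mathcal N$, giving the unique root $u^*=1-1/\mathcal N\in(0,1)$ since $\mathcal N>1$; back-substitution yields the stated formulas for $(E^*,M^*,F^*)$. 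In the bistable case $\Gamma(M)=1-e^{-\gamma M}$, using $\gamma M(u)=u/\zeta$ one gets $h(u)=(1-e^{-u/\zeta})(1-u)=:g(u;\zeta)$.

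For the bistable existence statement I would study $G(\zeta):=\max_{u\in[0,1]}g(u;\zeta)$. Since $g(\cdot\,;\zeta)$ vanishes at $u=0,1$ and is positive in between, the maximum is attained at an interior point; moreover $\partial_\zeta g=-\frac{u}{\zeta^2}e^{-u/\zeta}(1-u)<0$ for $u\in(0,1)$, so $G$ is strictly decreasing, with $G(\zeta)\to1$ as $\zeta\to0^+$ and $G(\zeta)\to0$ as $\zeta\to\infty$. As $0<1/\mathcal N<1$, there is a unique $\zeta_c$ with $G(\zeta_c)=1/\mathcal N$; for $\zeta<\zeta_c$ the level $1/\mathcal N$ meets the graph of $g$ at exactly two interior points $u_1<u_2$, yielding two positive equilibria, and the ordering $u_1<u_2$ together with the monotonicity of $E\mapsto M$ and $E\mapsto F$ gives $(E^*_1,M^*_1,F^*_1)\ll(E^*,M^*,F^*)$. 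The explicit equation \eqref{def:zetac} comes from the tangency $g=1/\mathcal N$, $\partial_u g=0$: eliminating $e^{-u/\zeta}$ from $\partial_u g=0$ gives $e^{-u/\zeta}=\zeta/(s+\zeta)$ with $s:=1-u$, whence $g=s^2/(s+\zeta)=1/\mathcal N$ forces the quadratic $\mathcal N s^2=s+\zeta$, i.e. $s=\frac{1+\sqrt{4\mathcal N\zeta+1}}{2\mathcal N}$; reinserting into $e^{-(1-s)/\zeta}=\zeta/(\mathcal N s^2)$ and simplifying reproduces \eqref{def:zetac}.

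For stability I would exploit that the Jacobian
\begin{equation*}
J=\begin{pmatrix}-\frac{bF}{K}-(\mu_E+\nu_E)&0&b\big(1-\tfrac EK\big)\\[2pt](1-\rho)\nu_E&-\mu_M&0\\[2pt]\rho\nu_E\Gamma(M)&\rho\nu_E E\,\Gamma'(M)&-\mu_F\end{pmatrix}
\end{equation*}
is a Metzler matrix (all off-diagonal entries are $\ge0$, using $E<K$ at any positive equilibrium): the system is cooperative. For a $3\times3$ Metzler matrix whose first two leading principal minors of $-J$ are positive---here $\frac{bF}{K}+(\mu_E+\nu_E)>0$ and the $2\times2$ minor $\big(\frac{bF}{K}+(\mu_E+\nu_E)\big)\mu_M>0$ automatically---linear asymptotic stability is equivalent to $-J$ being a nonsingular M-matrix, i.e. to $\det J<0$. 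A direct expansion of $\det J$, simplified with the equilibrium relations $\tfrac{dM}{du}=M/u$ and $\mu_F F=\rho\nu_E E\,\Gamma(M)$, reduces this sign condition to $\frac{M\Gamma'(M)}{\Gamma(M)}<\frac{u}{1-u}$, which is exactly the statement $h'(u)<0$. Thus each positive equilibrium is stable iff $h$ is decreasing there: the monostable $u^*$ has $h'=-1<0$ (stable), while in the bistable case $h'(u_1)>0$ (unstable) and $h'(u_2)<0$ (stable), matching the claim. The extinction equilibrium must be treated separately since $u=0$ is not a root of $h=1/\mathcal N$: evaluating $J$ at $(0,0,0)$ gives $\det J=\mu_M\mu_F(\mu_E+\nu_E)(\mathcal N-1)>0$ in the monostable case (unstable, one positive eigenvalue), whereas in the bistable case $\Gamma(0)=0$ makes $J$ have the three negative eigenvalues $-(\mu_E+\nu_E),-\mu_M,-\mu_F$ (asymptotically stable).

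The main obstacle is the bistable existence analysis: establishing that $G$ is strictly decreasing (so that $\zeta_c$ is unique and the root count is exactly two for $\zeta<\zeta_c$) and, above all, carrying out the tangency elimination cleanly enough to land on the precise transcendental form \eqref{def:zetac}. The stability part, once the cooperative/Metzler structure is recognized, is comparatively routine, the only care being the separate and opposite behavior of the extinction equilibrium in the two cases.
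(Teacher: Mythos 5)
Your proposal is correct, and its existence part is essentially the paper's argument in different clothes: your unknown $u=E/K$ coincides with the paper's variable $m=\frac{bF^*}{bF^*+K(\mu_E+\nu_E)}$, and your scalar equation $(1-e^{-u/\zeta})(1-u)=1/\mathcal{N}$ is exactly the paper's \eqref{eqapp:phi}. You diverge in how the threshold is extracted: the paper proves concavity of $\varphi(m)=\mathcal{N}(1-e^{-m/\zeta})(1-m)$, characterizes the interior maximizer $m_0$ by $1-m_0=\zeta(e^{m_0/\zeta}-1)$, turns $\varphi(m_0)>1$ into a quadratic inequality in $e^{m_0/\zeta}$ and deduces \eqref{cond:stat}, whose two sides are monotone in $\zeta$ in opposite directions; you instead show $G(\zeta)=\max_u g(u;\zeta)$ is strictly decreasing (via $\partial_\zeta g<0$) with limits $1$ and $0$, and recover \eqref{def:zetac} by the tangency elimination $e^{-u/\zeta}=\zeta/(s+\zeta)$, $\mathcal{N}s^2=s+\zeta$ with $s=1-u$ --- I checked that this elimination does reproduce \eqref{def:zetac} exactly, since $\mathcal{N}s^2/\zeta=(s+\zeta)/\zeta$ equals the argument of the logarithm there. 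The genuinely different contribution is the stability analysis: the paper \emph{does not prove stability at all}, deferring to \cite[Lemma 3]{STR19}, whereas you give a self-contained proof exploiting the Metzler/cooperative structure, the automatic positivity of the first two leading principal minors of $-J$, and the reduction of the sign condition $\det J<0$ to $h'(u)<0$; I verified this reduction (using $bF/K=(\mu_E+\nu_E)u/(1-u)$, $(1-\rho)\nu_E E=\mu_M M$ and $\Gamma(M)(1-u)=1/\mathcal{N}$ at equilibrium, $\det J<0$ collapses to $M\Gamma'(M)/\Gamma(M)<u/(1-u)$), as well as your treatment of the origin, including the triangular Jacobian with eigenvalues $-(\mu_E+\nu_E),-\mu_M,-\mu_F$ when $\Gamma(0)=0$. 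Your route thus buys a complete, checkable proof of the stability claims that the paper only cites.

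One small omission you should repair: you assert that for $\zeta<\zeta_c$ the level $1/\mathcal{N}$ meets the graph of $g(\cdot;\zeta)$ at \emph{exactly} two interior points, and both this count, the identification of $\zeta_c$ with the tangency configuration, and the signs $h'(u_1)>0>h'(u_2)$ rest on $g(\cdot;\zeta)$ being strictly concave (or at least strictly unimodal) in $u$ --- without it, $G(\zeta)>1/\mathcal{N}$ only yields \emph{at least} two crossings. The paper makes this concavity explicit; in your notation it is the one-line computation $\partial_u^2 g=-e^{-u/\zeta}\bigl(\frac{1-u}{\zeta^2}+\frac{2}{\zeta}\bigr)<0$ on $(0,1)$, so the gap is trivially filled, but it must be stated since the tangency elimination is meaningless without it.
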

The proof of the first point $(i)$ may be obtained by straightforward computations. The proof of the second point $(ii)$ is postponed to the appendix; we also refer to \cite{STR19}. The main idea of the proof is the following remark: if there exists a stationary solution $(E^*, F^*, M^*)$  then direct computations imply necessarily that
\begin{equation}\label{eq:gamma:phi}
\Gamma(\phi_0(F^*)) = \frac{\mu_F F^*}{\rho \nu_E K} + \frac{1}{\mathcal{N}}
\end{equation}
with 
\begin{equation}\label{def:phi0}
\phi_0(F) = \frac{(1-\rho) \nu_E b F}{\mu_M\frac{b F}{K}+\mu_M(\mu_E+\nu_E)}.
\end{equation}
The idea is then to prove that if $\gamma > \gamma_c$ then \eqref{eq:gamma:phi} has two positive solutions.  

\medskip

% \begin{equation}
%   \label{cond:stat}
%  \frac{1+\sqrt{4\zeta\mathcal{N}+1}}{2\mathcal{N}} < 1-\zeta \ln \left(\frac{2\zeta \mathcal{N} + 1 + \sqrt{4\zeta \mathcal{N} + 1}}{2\zeta \mathcal{N}} \right).
% \end{equation}

We focus now on the invasion.
It is well-known that in the monostable case, there is a 'hair trigger effect' meaning that as soon as the initial data is non-zero and nonnegative, then the species is invading, i.e. the solution of \eqref{syst0} converges to the positive steady state in the whole domain. 
The bistable case is more sophisticated. Indeed, since both the extinction equilibrium and the positive equilibrium $(E^*,M^*,F^*)$ are stable, it is not clear to which of these steady states the solution will converge on the whole spatial domain. However, there exists planar traveling wave solutions connecting the two stable steady states $(0,0,0)$ and $(E^*,M^*,F^*)$ (see e.g. \cite{VOL}), i.e. a direction $\textrm{e}$ and a particular solution under the form $(E,M,F)(x,t) = (\widetilde{E},\widetilde{M},\widetilde{F})(x\cdot\textrm{e}+ct)$ where $c$ is the so-called speed of the front, and $\widetilde{E},\widetilde{M},\widetilde{F}$ are nondecreasing functions from $\RR$ to $\RR^+$ such that
\begin{subequations}\label{systTW}
\begin{align}\label{systTW:E}
&c \widetilde{E}' = b \widetilde{F} \left(1 - \frac{\widetilde{E}}{K}\right) - (\mu_E+\nu_E) \widetilde{E} \\
\label{systTW:M}
&c \widetilde{M}' - \widetilde{M}'' = (1-\rho) \nu_E \widetilde{E} - \mu_M \widetilde{M}  \\
\label{systTW:F}
  &c \widetilde{F}' - \widetilde{F}'' = \rho \nu_E \widetilde{E}\, \Gamma(\widetilde{M}) - \mu_F \widetilde{F}  \\
  \label{systTW:lim}
  & (\widetilde{E},\widetilde{M},\widetilde{F})(+\infty) = (0,0,0), \qquad
     (\widetilde{E},\widetilde{M},\widetilde{F})(-\infty) = (E^*,M^*,F^*).
\end{align}
\end{subequations}
With this convention, we say that the species is invasive when $c>0$.

Although determining the sign of the speed $c$ for scalar reaction-diffusion is well-known since decades (see e.g. \cite{PER}), the case of systems is more tricky and still widely open (we refer e.g. to the review article \cite{Girardin}).
The following proposition gathers our main results concerning traveling waves in the bistable case and states a sufficient condition on the parameters to guarantee invasion of the species.
\begin{proposition}\label{prop:TWMs0}
  Let us assume that $\mathcal{N}>1$ and consider the bistable case $\Gamma(M) = (1-e^{-\gamma M})$ with $\gamma>\gamma_c$. Then, there exists a traveling wave solution $(c,\widetilde{E},\widetilde{M},\widetilde{F})$ of \eqref{systTW}.
  Moreover, $\gamma\mapsto c_\gamma$ is increasing and there exists $\gamma_0>\gamma_c$ such that,
  for $\gamma>\gamma_0$ it holds that $c_\gamma>0$, and $\gamma_0$ is such that
  \begin{equation}
      \label{eq:condition}
  \int_0^{F^*} \left(\frac{\rho \nu_E b u}{\frac{bu}{K}+\mu_E+\nu_E} \left(1-e^{-\gamma_0\phi(u)}\right) - \mu_F u\right)\,du = 0,
  \end{equation}
  where the function $\phi$ is defined in \eqref{def:phi}.
\end{proposition}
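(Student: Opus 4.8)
The plan is to treat the three assertions — existence, monotonicity of $\gamma\mapsto c_\gamma$, and positivity of $c_\gamma$ for $\gamma>\gamma_0$ — separately, exploiting throughout the fact that \eqref{systTW} is a \emph{cooperative} (monotone) system on the box $[0,E^*]\times[0,M^*]\times[0,F^*]$. First I would record this monotone structure: on $\{0\le E\le K\}$ one has $\partial_F\big(bF(1-E/K)-(\mu_E+\nu_E)E\big)=b(1-E/K)\ge 0$, $\partial_E\big((1-\rho)\nu_E E-\mu_M M\big)=(1-\rho)\nu_E\ge 0$, and both $\partial_E\big(\rho\nu_E E\,\Gamma(M)-\mu_F F\big)=\rho\nu_E\Gamma(M)\ge 0$ and $\partial_M\big(\rho\nu_E E\,\Gamma(M)-\mu_F F\big)=\rho\nu_E E\,\Gamma'(M)\ge 0$, the last because $\Gamma$ is nondecreasing; since $E^*=K(1-1/\mathcal N)<K$ the box is invariant and the off-diagonal reaction terms are nonnegative there. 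Together with the bistable picture of Lemma \ref{lem:stat}(ii) (the end states $(0,0,0)$ and $(E^*,M^*,F^*)$ are stable and $(E_1^*,M_1^*,F_1^*)$ is the unique intermediate, unstable equilibrium), this places us in the framework of bistable fronts of monotone parabolic systems, so the existence of a monotone profile with a \emph{unique} speed $c_\gamma$ follows from the Volpert theory \cite{VOL}. The only non-standard point is that the $E$-equation carries no diffusion; I would handle this by the vanishing-viscosity regularisation $c\widetilde E'-\varepsilon\widetilde E''=b\widetilde F(1-\widetilde E/K)-(\mu_E+\nu_E)\widetilde E$, obtaining for each $\varepsilon>0$ a monotone wave $(c_\varepsilon,\dots)$ and passing to the limit $\varepsilon\to 0$ using the uniform $L^\infty$ and monotonicity (hence $BV$) bounds, together with a uniform bound on $c_\varepsilon$ coming from the invariant box.

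For the monotonicity of $\gamma\mapsto c_\gamma$ I would argue by comparison. Since $\Gamma_\gamma(M)=1-e^{-\gamma M}$ is pointwise nondecreasing in $\gamma$ for $M\ge 0$, increasing $\gamma$ enlarges the reaction term in \eqref{systTW:F}, while simultaneously the upper equilibrium moves up and the intermediate one moves down. For $\gamma_1<\gamma_2$ I would insert the $\gamma_1$-profile into the $\gamma_2$-dynamics and check, using the cooperativity above, that it is a generalised sub-solution for the monotone semiflow associated with $\gamma_2$; the standard sliding/comparison argument for bistable monotone systems then forces $c_{\gamma_1}\le c_{\gamma_2}$, and the same device yields continuity of $\gamma\mapsto c_\gamma$. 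The mild technical point is that the two waves connect \emph{different} end states; this is handled by performing the comparison on the common range and invoking the monotonicity of the semiflow rather than a naive pointwise ordering.

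For the sign I would reduce to a scalar bistable equation by slaving $E$ and $M$ to $F$. Setting $\mathcal E(F):=\dfrac{bF}{bF/K+\mu_E+\nu_E}$ makes the reaction of the $E$-equation vanish identically (this slaving is exact when $c=0$), and $\phi$ (see \eqref{def:phi}), which coincides with the quasi-equilibrium $\phi_0$ of \eqref{def:phi0}, slaves $M$ via $M=\phi(F)$. The resulting scalar nonlinearity is
\begin{equation*}
g_\gamma(F)=\rho\nu_E\,\mathcal E(F)\,\big(1-e^{-\gamma\phi(F)}\big)-\mu_F F,
\end{equation*}
which for $\gamma>\gamma_c$ is bistable with zeros $0<F_1^*<F^*$; its standing-wave/equal-area condition $\int_0^{F^*}g_\gamma=0$ is precisely \eqref{eq:condition}. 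Because the factor $1-e^{-\gamma\phi(F)}$ increases with $\gamma$, the area $\int_0^{F^*}g_\gamma$ is strictly increasing in $\gamma$, tends to the monostable area (positive since $\mathcal N>1$) as $\gamma\to\infty$, and is negative for $\gamma$ close to $\gamma_c$, so it vanishes at a unique $\gamma_0\in(\gamma_c,\infty)$; for $\gamma>\gamma_0$ the scalar area is positive and the classical scalar criterion \cite{PER} produces a scalar front of positive speed. To transfer positivity to the full system I would use cooperativity once more: since $\Gamma$ is increasing in $M$, it suffices to bound $E$ and $M$ from below by their quasi-equilibria to bound the $F$-reaction from below by $g_\gamma$, and thereby build from the scalar front a sub-solution of the full parabolic system that spreads with positive speed, forcing $c_\gamma>0$.

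The main obstacle is exactly this last transfer. The slaving $E=\mathcal E(F)$ costs nothing because the $E$-equation has no diffusion and its reaction vanishes at quasi-equilibrium, so its only defect is the sign-controlled term $\partial_t\mathcal E(F)$; but $M$ carries diffusion and is genuinely \emph{not} slaveable, so $M=\phi(F)$ produces a defect from both $\partial_t\phi(F)$ and $-\phi(F)''$ that must be absorbed. My resolution is to observe that only a \emph{lower} bound on $M$ is needed (by monotonicity of $\Gamma$), and that such a bound is itself produced by a quasi-equilibrium sub-solution of the cooperative $M$-equation driven by $E$; the residual diffusive defect is then absorbed by a small steepening and translation of the scalar profile, which perturbs the speed quantitatively but cannot change its sign. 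This yields $c_\gamma>0$ for every $\gamma>\gamma_0$ and completes the scheme.
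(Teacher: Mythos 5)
Your existence and monotonicity steps are essentially sound and close to the paper's (the paper invokes the literature for bistable monotone systems for existence, and proves monotonicity of $\gamma\mapsto c_\gamma$ by exactly your device of viewing the $\gamma_1$-wave as a sub-solution of the $\gamma_2$-system). The genuine gap is in the sign argument, and it begins with a factual error: you assert that the function $\phi$ of \eqref{def:phi} ``coincides with the quasi-equilibrium $\phi_0$ of \eqref{def:phi0}''. It does not. By \eqref{def:phi}, $\phi$ carries an extra factor $\tfrac12$ and the correction $\bigl(1-\exp\bigl(2\sqrt{\mu_M/\mu_F}\,\ln(1-F/F^*)\bigr)\bigr)$, so that $\phi(F)<\phi_0(F)$ on $(0,F^*)$ and $\lim_{F\to F^*}\phi(F)=M^*/2$, whereas $\phi_0(F^*)=M^*$ (see \eqref{rel_stat1}). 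Consequently your equal-area condition $\int_0^{F^*}g_\gamma=0$, built from $\phi_0$, defines a \emph{different} (smaller) threshold than the $\gamma_0$ of \eqref{eq:condition}, so even if your scheme closed, it would not prove the proposition as stated.

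The discrepancy is not cosmetic: it sits exactly where the difficulty of the proof lives, namely the diffusive defect of $M$ that you flag in your last paragraph. Your proposed resolution fails on two counts. First, the quasi-equilibrium $M=\phi_0(F)$ is not in general a sub-solution of the diffusive $M$-equation: one would need $-(\phi_0\circ F)''\le 0$ along the profile, which fails where $\phi_0\circ F$ is concave, and the Dirichlet-type condition $M(0)=0$ forces $M$ strictly below quasi-equilibrium near the edge of the front --- this is precisely the regime that determines the leading-edge reaction in the $F$-equation. Second, the claim that the residual defect can be absorbed by ``a small steepening and translation\dots which cannot change the sign'' of the speed is not quantitative and is false near threshold: for $\gamma$ close to your equal-area value the scalar speed is arbitrarily small while the defect is of order one, so a perturbation can reverse the sign. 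The paper circumvents both problems by never slaving $M$ at quasi-equilibrium: it proves the rigorous pointwise lower bound $M\ge \phi(F)$ for monotone half-line profiles via the explicit representation formulas of Lemmas \ref{lem:1} and \ref{lem:2} (the factor $\tfrac12$ and the exponential correction in \eqref{def:phi} are exactly the price paid for the diffusion of $M$ and the condition $M(0)=0$), and then, instead of a travelling scalar sub-solution, builds a \emph{stationary} half-space sub-solution by the first-integral device $\underline{F}'=\sqrt{2\bigl(G(\underline{F_m})-G(\underline{F})\bigr)}$ under the strict-area condition \eqref{condition} (Propositions \ref{prop:exist} and \ref{prop:sub1D}); since the wave lies above a stationary sub-solution, $c\ge 0$ at $\gamma_0$, and $c>0$ for $\gamma>\gamma_0$ follows from the monotonicity of $\gamma\mapsto c_\gamma$ that you already have. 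To repair your argument you must either reprove a lower bound of the form $M\ge\phi(F)$ with $\phi<\phi_0$ --- at which point you have reconstructed the paper's proof and its condition \eqref{eq:condition} --- or make the defect absorption quantitative, which the vanishing of the scalar speed at threshold obstructs.
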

This Proposition will be proved in Section \ref{sec:Ms0}. The fact that relation \eqref{eq:condition} defines $\gamma_0$ uniquely is also addressed in Remark \ref{rem:condition}. 
We point out that the condition $\gamma >\gamma_0$ is a sufficient condition to guarantee the invasion phenomena, not a necessary one.
% \textcolor{blue}{\textbf{Je pense même que la bonne condition est 
% $$
%  \int_0^{F^*} \left(\frac{\rho \nu_E b u}{\frac{bu}{K}+\mu_E+\nu_E} \left(1-e^{-\gamma_0\phi_0(u)}\right) - \mu_F u\right)\,du = 0.
%  $$
%  En effet, le choix de la division par 2 est arbitraire est dû au lemme 5. On peut l'améliorer en ayant $(1+\varepsilon) $ dans l'exposant de l'exponentielle pour tout $\varepsilon >0$. Il faudrait faire du numérique et voir si cela vaut la peine d'être explicité ou peut être juste donner l'idée de démonstration étant donné que ce n'est pas le résultat principal et que le papier est déjà lourd comme cela. }}
%  \textcolor{red}{C'est certainement vrai, mais je ne sais pas vraiment si cela vaut le coup d'écrire ce résultat dans cet article, sachant que dans tous les cas, cela ne sera qu'une condition suffisante ! Le papier est assez dense comme ça.}

\subsection{Presentation of the main results : General case}

As a consequence of Proposition \ref{prop:TWMs0}, we have that for $\gamma>\gamma_0$ the mosquito species modeled by system \eqref{syst} is invasive. Then, it is relevant to use the sterile insect technique to fight against this invasion.
We consider now the full system \eqref{syst} where we assume that sterile males are released in an annulus of action of width $L=r_2-r_1$.

We first recall an important and useful result concerning the equilibria and their stability for the corresponding dynamical system. This system reads~:
\begin{subequations}\label{systdyn}
\begin{align}\label{systdyn:E}
&\frac{dE}{dt} = b F \left(1 - \frac{E}{K}\right) - (\mu_E+\nu_E) E \\
\label{systdyn:M}
&\frac{dM}{dt} = (1-r) \nu_E E - \mu_M M  \\
\label{systdyn:F}
&\frac{dF}{dt} = r \nu_E E \frac{M}{M+\gamma_s M_s}\Gamma(M+\gamma_s M_s) - \mu_F F  \\
\label{systdyn:Ms}
&\frac{dM_s}{dt} = \Lambda  - \mu_s M_s 
\end{align}
\end{subequations}
The following result shows that the sterile insect technique may be efficient to eradicate the population of mosquitoes~: 
\begin{lemma}\label{lem:0GAS}
  Under the assumption of Lemma \ref{lem:stat}. Let us consider the differential system \eqref{systdyn} with the function $\Gamma$ as in \eqref{eq:Gamma}. There exists $\Lambda^*>0$ such that if $\Lambda > \Lambda^*$ then the extinction equilibrium $(0,0,0)$ is globally attractive for \eqref{syst:E}--\eqref{syst:F}.
\end{lemma}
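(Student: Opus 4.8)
The plan is to reduce everything to the $(E,M,F)$ components by exploiting the decoupled structure of the sterile-male equation together with a fluctuation (lim sup) argument. First I would record the elementary facts that the nonnegative octant is forward invariant (on each face the corresponding vector-field component points inward: at $E=0$ one has $E'=bF\geq 0$, at $M=0$ one has $M'=(1-\rho)\nu_E E\geq 0$, at $F=0$ one has $F'\geq 0$, and at $M_s=0$ one has $M_s'=\Lambda\geq 0$), and that all trajectories are bounded. Indeed, whenever $E>K$ one has $E'<0$, so $E$ enters and remains in $[0,K]$; then $M'\leq (1-\rho)\nu_E K-\mu_M M$ and, since $\Gamma\leq 1$ and $\frac{M}{M+\gamma_s M_s}\leq 1$, also $F'\leq \rho\nu_E K-\mu_F F$, so $M$ and $F$ are ultimately bounded as well. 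In particular solutions are global and bounded with bounded derivatives, which is all that is needed to apply the fluctuation lemma below.

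The decisive observation is that \eqref{systdyn:Ms} is autonomous and decoupled: for constant $\Lambda$ one has $M_s(t)\to \Lambda/\mu_s$ as $t\to\infty$, so the mating-probability prefactor is eventually small, and this is precisely where the size of $\Lambda$ enters. Set $\barE:=\limsup_{t\to\infty}E(t)$ and likewise $\barM,\barF$; by the previous step these are finite, nonnegative, and $\barE\leq K$. I would then apply the fluctuation lemma to each of $E,M,F$: evaluating the right-hand sides along sequences $t_n\to\infty$ on which the component attains its lim sup while its derivative tends to $0$, and using $\Gamma\leq 1$, $\frac{M}{M+\gamma_s M_s}\leq \frac{M}{\gamma_s M_s}$ together with $M_s\to\Lambda/\mu_s$, yields
\begin{equation*}
\barE\leq \frac{b}{\mu_E+\nu_E}\,\barF,\qquad
\barM\leq \frac{(1-\rho)\nu_E}{\mu_M}\,\barE,\qquad
\barF\leq \frac{\rho\nu_E\,\mu_s}{\gamma_s\,\mu_F\,\Lambda}\,\barE\,\barM .
\end{equation*}
(The bound on $\barE$ uses $0\leq 1-E/K\leq 1$ once $E\leq K$; the bound on $\barF$ uses $\limsup(EM)\leq \barE\,\barM$ together with $M_s\to\Lambda/\mu_s$.)

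Chaining these three inequalities gives $\barE\leq C(\Lambda)\,\barE^{\,2}$ with
\begin{equation*}
C(\Lambda)=\frac{b\,\rho(1-\rho)\nu_E^2\,\mu_s}{(\mu_E+\nu_E)\,\mu_F\,\mu_M\,\gamma_s\,\Lambda}.
\end{equation*}
Hence either $\barE=0$ or $\barE\geq 1/C(\Lambda)$; since $\barE\leq K$, the second alternative is impossible as soon as $C(\Lambda)K<1$, i.e. as soon as
\begin{equation*}
\Lambda>\Lambda^*:=\frac{K\,b\,\rho(1-\rho)\nu_E^2\,\mu_s}{(\mu_E+\nu_E)\,\mu_F\,\mu_M\,\gamma_s}.
\end{equation*}
For such $\Lambda$ one gets $\barE=0$, whence $\barM=\barF=0$ by the first two inequalities, and together with nonnegativity this means $(E,M,F)(t)\to(0,0,0)$ for every nonnegative initial datum, i.e. global attractivity of the extinction equilibrium (while $M_s\to\Lambda/\mu_s$). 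Only $\Gamma\leq 1$ was used, so the argument covers both choices in \eqref{eq:Gamma} at once.

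The steps are individually routine; the only points requiring care are the justification of the fluctuation lemma (supplied by the boundedness established in the first step) and the correct handling of the product term $EM$ and of the limit $M_s\to\Lambda/\mu_s$ inside the lim sup, so that no spurious constant corrupts the threshold $\Lambda^*$. An alternative, fully equivalent route—closer to the comparison philosophy used elsewhere in the paper—is to observe that for $E\leq K$ the $(E,M,F)$ subsystem is cooperative once $M_s$ is frozen at a lower bound of $\Lambda/\mu_s$, to dominate it by the cooperative system obtained by replacing the mating prefactor with $M/(\gamma_s M_s)$, and to check that this comparison system has $(0,0,0)$ as its unique equilibrium in $[0,K]\times\RR^+\times\RR^+$ precisely when $\Lambda>\Lambda^*$; monotonicity then forces convergence to it.
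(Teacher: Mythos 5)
Your argument is correct, and it is genuinely different from what the paper does: the paper does not prove Lemma~\ref{lem:0GAS} at all, but simply defers to the literature, citing \cite[Lemma 3]{STR19} for the bistable case and \cite[Proposition 2.1]{ADPV2022} for the monostable case, where the result is obtained by analyses tailored to the specific nonlinearity (comparison with auxiliary monotone systems and equilibrium analysis of the reduced dynamics). Your fluctuation-lemma route is self-contained and treats both choices of $\Gamma$ in \eqref{eq:Gamma} in one stroke, since only $\Gamma\leq 1$ and the crude bound $\frac{M}{M+\gamma_s M_s}\leq \frac{M}{\gamma_s M_s}$ are used; it also produces an explicit, fully computable threshold $\Lambda^*=\frac{Kb\rho(1-\rho)\nu_E^2\mu_s}{(\mu_E+\nu_E)\mu_F\mu_M\gamma_s}$, which the paper's citation-based statement does not exhibit. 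The chain $\barE\leq C(\Lambda)\barE^2$ with $\barE\leq K$ is sound: the eventual bound $E\leq K$ (since $E'<0$ whenever $E>K$) legitimizes dropping the logistic factor, the dissipativity bounds make all three lim sups finite with bounded derivatives so the fluctuation lemma applies, $\limsup(EM)\leq\barE\,\barM$ holds for nonnegative functions, and the decoupled linear equation for $M_s$ gives $M_s(t)\to\Lambda/\mu_s>0$, which both justifies the division by $M_s$ for large times (note also $M_s(t)>0$ for all $t>0$ since $\Lambda>0$, so the mating fraction is well defined along trajectories) and feeds the correct constant into $C(\Lambda)$. What the cited proofs buy instead is finer information — in \cite{STR19} the analysis of the bistable structure gives not just a sufficient threshold but a description of how the basin of extinction grows with $\Lambda$ — whereas your estimate, being based on the crude bound $M/(M+\gamma_s M_s)\leq M/(\gamma_s M_s)$, is generally not sharp; your closing remark that one could instead freeze $M_s$ at a lower bound near $\Lambda/\mu_s$ and compare with a cooperative system is indeed essentially the mechanism underlying the cited references, so the two routes are complementary rather than in conflict.
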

We refer to \cite[Lemma 3]{STR19} for the bistable case and \cite[Proposition 2.1]{ADPV2022} for the proof of this result.

In order to obtain the existence of 'forced' traveling waves, we must assume that the initial data are 'well-prepared'. More precisely, we assume that mosquitoes have been eliminated in the center of the domain (for instance by applying the SIT in a fixed region) whereas the mosquito population is at the positive equilibrium far away from the center. 
Such an assumption is natural. Indeed, if we want to propagate an elimination strategy to a large area, we need to be sure of having succeeded in eliminating in a smaller region before extending the strategy.
More precisely, we will assume the following~:
\begin{subequations}\label{hyp:init}
\begin{align}
    \label{hyp:init1}
    \exists\, R_0^0 >0, C_0 >0, u_0\in (0,1), \quad \forall\,x\in \RR^2, \quad 0 \leq F^0(x) \leq  F^* (u_0 \mathbf{1}_{\{|x|\leq R_0^0\}} + \mathbf{1}_{\{|x|>R_0^0\}}), \\[2mm]
    0\leq E^0(x) \leq \min\{K,C_0 F^0(x)\}, 
    \quad M^0(x) \leq C_0 F^0(x), \nonumber \\ 
    \quad M_s^0 \geq \frac{\bar\Lambda}{\mu_s} \mathbf{1}_{\{|x|\leq R_0^0\}} \text{ and } M_s^0\in L^\infty(\RR^2), \nonumber   \\
    \label{hyp:init2}
    \exists\, R_0^1 > 0, \quad \forall\, |x| > R_0^1, \qquad (E^0,M^0,F^0,M_s^0)(x) = (E^*,M^*,F^*,0).
\end{align}
\end{subequations}
where $(E^*,M^*,F^*)$ is the largest equilibrium defined in Lemma \ref{lem:stat}, and $\bar\Lambda$ is as in the statement of Theorem \ref{MainTheorem} below.
The main result of this work concerns the existence of a wave of extinction. 
\begin{theorem}\label{MainTheorem}
Let us assume $\mathcal{N}>1$, and $\gamma>\gamma_0$ in the bistable case, where $\gamma_0$ is defined in Proposition \ref{prop:TWMs0}.
  Let $c>0$, $0<R_1<R_2$. Let us assume that the release function is given by
  \begin{subequations}\label{Lambda}
    \begin{align}\label{Lambdabistable}
      &\Lambda(x,t) = \bar\Lambda \mathbf{1}_{\{R_1+ct\leq |x|\leq R_2+ct\}}, \quad \text{ in the bistable case},  \\
      \label{Lambdamonostable}
      &\Lambda(x,t) = \bar\Lambda \mathbf{1}_{\{R_1+ct\leq |x|\leq R_2+ct\}} + \bar\Lambda e^{\eta(|x|-(R_1+ct))} \mathbf{1}_{\{|x| < R_1+ct\}} , \quad \text{ in the monostable case}.
    \end{align}
  \end{subequations}
  
  Then, there exist $\bar\Lambda>0$ large enough, $\bar\eta$ small enough and  $\bar L>0$ large enough such that for all $\Lambda \geq \bar\Lambda$, $R_2-R_1 > \bar L$, $0<\eta\leq \bar \eta$, the solution of \eqref{syst} with the release function given in \eqref{Lambda} and initial data satisfying \eqref{hyp:init} for $R_0^0<R_0^1$ large enough and $u_0$ small enough, verifies
\begin{equation*}
\begin{aligned}
&(i) \quad \forall\, \underline{c} < c, \quad \underset{ t \to +\infty}{\lim} \ \underset{|x| < \underline{c} t}{\sup} \|(E,M,F)(x, t)\| = 0, \\
&(ii) \quad  \forall\, \overline{c} > c, \quad \underset{ t \to +\infty}{\lim} \ \underset{|x| > \overline{c}t}{\inf} \|(E^*,M^*,F^*)-(E,M,F)(x, t)\| = 0.
\end{aligned}
\end{equation*}
\end{theorem}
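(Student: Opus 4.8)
The plan is to prove Theorem~\ref{MainTheorem} via the comparison principle for the full system \eqref{syst}, constructing explicit sub- and super-solutions that propagate radially at speed $c$ and trap the true solution between them. Because the release is radially symmetric and the initial data \eqref{hyp:init} are bounded by radial profiles, I would work in the radial variable $r=|x|$, where the Laplacian becomes $\partial_{rr} + \frac{1}{r}\partial_r$; the extra first-order term $\frac{1}{r}\partial_r$ is the characteristic two-dimensional difficulty and must be controlled throughout. The two conclusions $(i)$ and $(ii)$ will follow by squeezing: $(i)$ from a super-solution that decays to $(0,0,0)$ inside the moving ball $\{|x|<\underline c\,t\}$, and $(ii)$ from a sub-solution that stays near $(E^*,M^*,F^*)$ outside $\{|x|>\overline c\,t\}$.

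The key steps, in order, are as follows. First I would reduce the system to something tractable by exploiting its quasi-monotone (cooperative after the change of the mating term) structure, so that a comparison principle applies; the coupling through $\frac{M}{M+\gamma_s M_s}\Gamma(M+\gamma_s M_s)$ must be handled by bounding $F$ from above using $M_s\geq \frac{\bar\Lambda}{\mu_s}\mathbf 1$ inside the cleared zone (this is exactly what Lemma~\ref{lem:0GAS} gives at the ODE level, via the threshold $\Lambda^*$). Second, in Section~\ref{sec:super-solution} I would build the super-solution: take the radially-moving eradication front, i.e. a profile of the form $(\overline E,\overline M,\overline F)(|x|-ct)$ patched with the homogeneous equilibrium $(E^*,M^*,F^*)$ far out, using $\bar\Lambda$ large and $\bar L=R_2-R_1$ large so that the release annulus is wide enough to drive the reaction terms below threshold across the front. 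Third, in Section~\ref{sec:sub-solution} I would build the sub-solution from the planar bistable traveling wave $(\widetilde E,\widetilde M,\widetilde F)$ of Proposition~\ref{prop:TWMs0}, whose existence and \emph{positive} speed $c_\gamma>0$ (guaranteed by $\gamma>\gamma_0$) is precisely what ensures the cleared region cannot be re-invaded faster than $c$; here the curvature term $\frac{1}{r}\partial_r$ works in our favor for the super-solution but against us for the sub-solution, so I would insert a shift $\sigma(t)$ (e.g. a logarithmic or linear correction $r\mapsto |x|-ct+\sigma(t)$) absorbing the $O(1/r)$ error, valid for $|x|$ bounded below by a growing quantity like $\underline c\,t$.

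Finally, in Section~\ref{sec:final} I would invoke the comparison principle to sandwich the solution between the sub- and super-solutions for all $t>0$, then pass to the limit $t\to+\infty$ along the moving frames $|x|<\underline c\,t$ and $|x|>\overline c\,t$ to extract $(i)$ and $(ii)$. The monostable case differs only in the release \eqref{Lambdamonostable}, whose exponential tail $e^{\eta(|x|-(R_1+ct))}$ with $\eta\le\bar\eta$ small is designed to keep a super-solution valid in the absence of an Allee barrier (the hair-trigger effect would otherwise allow re-invasion), so I would treat it by the same sandwiching with an adjusted exponential super-solution ansatz. I expect the main obstacle to be the sub-solution construction in the two-dimensional radial geometry: reconciling the planar front (which solves \eqref{systTW} with no curvature term) with the genuine two-dimensional operator requires the moving-frame shift to grow slowly enough not to destroy the front shape yet fast enough to dominate the $\frac{1}{r}\partial_r$ curvature defect uniformly on the relevant annular region. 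Establishing a differential inequality for $\sigma(t)$ that simultaneously keeps $(\widetilde E,\widetilde M,\widetilde F)(|x|-ct+\sigma(t))$ a genuine sub-solution of the \emph{coupled} system, while respecting the ordering of all three components under the non-monotone mating nonlinearity, is where the real work lies.
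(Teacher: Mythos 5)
Your overall architecture --- radial comparison principle, a super-solution decaying inside the moving ball for $(i)$, a sub-solution pinned at $(E^*,M^*,F^*)$ far out for $(ii)$, and an interior exponential release tail in the monostable case --- is the paper's architecture. But your sub-solution construction contains a genuine gap, and your diagnosis of the curvature difficulty is backwards. You propose to use the planar bistable wave of Proposition~\ref{prop:TWMs0} with a time shift $\sigma(t)$ absorbing the $\frac1r\partial_r$ defect. First, no shift is needed: for a radially \emph{nondecreasing} profile translated outward at speed $c$, the moving-frame operator produces $-\bigl(c+\frac1r\bigr)\partial_r(\cdot)\leq 0$, which has the \emph{favorable} sign for a sub-solution (it is the super-solution for which curvature is unfavorable). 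The paper exploits exactly this: it builds a \emph{stationary} monotone profile $(\mathbf{M}_\gamma,\mathbf{F}_\gamma)$ on a half-line (Propositions~\ref{prop:sub2} and~\ref{prop:sol}) and translates it at speed $c$, which works for \emph{every} $c>0$, with $E$ slaved quasi-statically since its equation has no diffusion. Second, and more seriously, the planar wave solves the system with $M_s\equiv 0$ \emph{exactly}, while $\partial f_F/\partial M_s\leq 0$ means the true reaction term is strictly smaller wherever $M_s>0$; so the wave fails the sub-solution inequality precisely where the release acts, and rescuing it would require comparing the exponential tail of $|\widetilde F'|$ against the exponential tail of $M_s$, which is not guaranteed. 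The paper's resolution --- the real work you flag but do not supply --- is to bake the perturbation into the profile: system \eqref{stat2} carries the tail $m_s^\varepsilon(x)=\varepsilon e^{-\sqrt{\mu_s}x}$, the persistence of condition \eqref{condition} for small $\varepsilon$ gives the monotone profile with limit $(M^*,F^*)$, Lemma~\ref{lem:ms} gives the matching upper bound $M_s\leq \overline{M_s}$ with decay $e^{-\sqrt{\mu_s}(|x|-R_s-ct)}$, and $\underline{R}$ is taken large so that $\overline{M_s}\leq m_s^{\varepsilon_\gamma}(|x|-\underline R-ct)$ on the support of the sub-solution. Your $\sigma(t)$ addresses none of this.

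Your super-solution ansatz also fails as stated: a fixed translating profile $(\overline E,\overline M,\overline F)(|x|-ct)$ is stationary in the moving frame, so its value at the origin converges to its limit at $-\infty$; to dominate initial data with $F^0\leq u_0F^*$, $u_0>0$, near the center, that limit must be at least $u_0F^*>0$, contradicting the decay to zero required for $(i)$. Genuine \emph{time} decay is indispensable, and this is what the paper's two-speed structure provides: an inner ball expanding at an intermediate speed $c'\in(\tfrac23 c,c)$ on which $\barF=F^*\alpha(t)\beta(0)$ is spatially constant (killing the curvature singularity at $r=0$) with $\alpha(t)\to 0$ at rate controlled by $\alpha'>-\frac{\mu}{4}\alpha$, matched through the separable layer $\alpha(t)\beta(|x|-r_1-c't)$ on $\Omega^1_t$ and the profile $\psi$ on the annulus of action, where the width $L$ large and $\overline{M_s}$ large close the inequality. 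The unfavorable curvature term is then handled not by a shift but by building the one-dimensional profiles with the worst-case drift coefficient $c+\frac{1}{r_1}$, so that the residual $\bigl(\frac{1}{r_1}-\frac1r\bigr)\psi'\geq 0$ has the right sign on $\{r\geq r_1\}$. Finally, note that Lemma~\ref{lem:0GAS} plays no role in the actual proof: eradication inside comes from the super-solution together with the \emph{lower} bound on $M_s$ of Lemma~\ref{lem:Ms}, not from the ODE threshold $\Lambda^*$.
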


For practical applications, it seems natural to consider the heterogeneous case where the carrying capacity $K$ depends on the space variable. More precisely, let us assume the following~:
\begin{equation}\label{hyp:Khetero}
\exists\, K_2>K_1>0, \text{ such that, for all } x \in \RR^2,\  K_1 \leq K(x) \leq K_2.
\end{equation}
Then, we keep the same assumption on the initial data as in \eqref{hyp:init}, except that we modify obviously the assumption on $E^0$ in the following way~:
\begin{equation}\label{hyp:varKE}
0 \leq E^0(x) \leq \min\{K(x),C_0 F^0(x)\}\ \text{ for all } x\in \RR^2.
\end{equation}
As a consequence of Theorem \ref{MainTheorem} we have
\begin{corollary}\label{cor:hetero}
    Under the assumptions of Theorem \ref{MainTheorem} taking into account the modification \ref{hyp:varKE}, let $c>0$ and $0<R_1<R_2$ and consider the release function $\Lambda$ as in \eqref{Lambda}. Then, there exist $\bar\Lambda$ large enough, $\bar\eta$ small enough and  $\bar L>0$ large enough such that for all $\Lambda \geq \bar\Lambda$, $R_2-R_1 > \bar L$, $0<\eta\leq \bar \eta$, the solution of \eqref{syst} with the release function given in \eqref{Lambda} and initial data satisfying \eqref{hyp:init} for $R>R_2$ and $u_0$ small enough, verifies
\begin{equation*}
\begin{aligned}
&(i) \quad \forall \underline{c} < c, \quad \underset{ t \to +\infty}{\lim} \ \underset{|x| < \underline{c} t}{\sup} \|(E,M,F)(x, t)\| = 0, \\
&(ii) \quad  \forall \overline{c} > c, \quad \underset{ t \to +\infty}{\lim} \ \underset{|x| > \overline{c}t}{\inf} (E,M,F)(x, t) > 0.
\end{aligned}
\end{equation*}
\end{corollary}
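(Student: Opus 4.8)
The plan is to deduce Corollary \ref{cor:hetero} from Theorem \ref{MainTheorem} by a comparison argument, sandwiching the heterogeneous solution between the solutions of two \emph{homogeneous} problems with constant carrying capacities $K_1$ and $K_2$. Write $(E,M,F,M_s)$ for the solution of \eqref{syst} with variable $K(x)$ satisfying \eqref{hyp:Khetero}, and let $(E_i,M_i,F_i,M_s)$, $i\in\{1,2\}$, denote the solutions of \eqref{syst} in which $K(x)$ is replaced by the constant $K_i$, driven by the same release $\Lambda$ of \eqref{Lambda}. Since the $M_s$-equation \eqref{syst:Ms} does not involve $K$, the component $M_s$ is common to all three systems once the same $\Lambda$ and the same $M_s^0$ are imposed, so the comparison may be carried out in the variables $(E,M,F)$ with $M_s$ regarded as a fixed coefficient.

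The key structural observation is that, for fixed $M_s$, system \eqref{syst:E}--\eqref{syst:F} is cooperative: $F$ enters \eqref{syst:E} with a nonnegative coefficient, $E$ enters \eqref{syst:M} linearly with nonnegative coefficient, and in \eqref{syst:F} both $E$ and the mating term $\tfrac{M}{M+\gamma_s M_s}\Gamma(M+\gamma_s M_s)$ are nondecreasing in $M$ (for either choice of $\Gamma$ in \eqref{eq:Gamma}). Hence the parabolic comparison principle applies. Moreover the dependence on $K$ is monotone: substituting $(E_2,M_2,F_2)$ into the heterogeneous $E$-equation produces the defect $bF_2E_2\big(\tfrac1{K(x)}-\tfrac1{K_2}\big)\ge0$ since $K(x)\le K_2$ (the $M$- and $F$-equations being $K$-free, their defects vanish), so $(E_2,M_2,F_2)$ is a super-solution; symmetrically $K(x)\ge K_1$ yields the defect $bF_1E_1\big(\tfrac1{K(x)}-\tfrac1{K_1}\big)\le0$, so $(E_1,M_1,F_1)$ is a sub-solution. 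Provided the initial data are ordered, the comparison principle gives
\begin{equation*}
(E_1,M_1,F_1)(x,t)\ \le\ (E,M,F)(x,t)\ \le\ (E_2,M_2,F_2)(x,t)\qquad\text{for all }x,t.
\end{equation*}

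To produce admissible, ordered data I would keep the heterogeneous data itself for the $K_2$-problem: the bound \eqref{hyp:varKE} gives $E^0\le\min\{K(x),C_0F^0\}\le\min\{K_2,C_0F^0\}$, so \eqref{hyp:init} holds for $K_2$ with the same $(E^0,M^0,F^0,M_s^0)$, the far field being the $K_2$-equilibrium. For the $K_1$-problem I would prescribe the $K_1$-equilibrium of Lemma \ref{lem:stat} far from the origin and truncate near the origin so as to stay pointwise below $(E^0,M^0,F^0)$; since the equilibria are monotone in $K$ (explicitly $E^*=K(1-1/\mathcal N)<K$, $F^*\propto K$, $M^*\propto K$ in the monostable case, and likewise for the upper bistable equilibrium), one has $(E_1^*,M_1^*,F_1^*)\le(E_2^*,M_2^*,F_2^*)$ and $E_1^*<K_1$, so the $K_1$-data satisfy \eqref{hyp:init} and lie below the heterogeneous data. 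Applying Theorem \ref{MainTheorem} to each homogeneous problem with $\bar\Lambda,\bar L$ larger and $\bar\eta$ smaller than the thresholds required for \emph{both} $K_1$ and $K_2$ (and $\gamma$ above $\gamma_0$ for both), the super-solution vanishes on $\{|x|<\underline c\,t\}$, which through the upper sandwich yields assertion $(i)$; the sub-solution converges to $(E_1^*,M_1^*,F_1^*)\gg0$ on $\{|x|>\overline c\,t\}$, which through the lower sandwich yields $\liminf_{t\to\infty}\inf_{|x|>\overline c\,t}(E,M,F)>0$, i.e. assertion $(ii)$.

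I expect the main obstacle to be the initial-data bookkeeping for the sub-solution: one must simultaneously guarantee that the chosen $K_1$-data sit pointwise below the heterogeneous data, respect the constraint $E_1^0\le K_1$, and still fit the prepared-data template \eqref{hyp:init} (the correct $u_0,R_0^0,R_0^1$) so that Theorem \ref{MainTheorem} is genuinely applicable to the $K_1$-system. A secondary point is to confirm the monotonicity of the largest equilibrium in $K$ in the bistable case, which follows from the characterization \eqref{eq:gamma:phi}--\eqref{def:phi0} in Lemma \ref{lem:stat}; once these are settled, the conclusions $(i)$--$(ii)$ follow immediately from the sandwich.
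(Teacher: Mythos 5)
Your proposal is correct and takes essentially the same route as the paper: the paper's proof likewise sandwiches the heterogeneous solution between the solutions of the two homogeneous problems with constant carrying capacities $K_1$ and $K_2$ via the comparison principle, and then applies Theorem \ref{MainTheorem} to each of them. Your write-up is in fact more detailed than the paper's three-line argument, which leaves implicit both the defect computation showing that $(E_1,M_1,F_1)$ and $(E_2,M_2,F_2)$ are sub- and super-solutions and the initial-data bookkeeping that you spell out.
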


Theorem~\ref{MainTheorem} has potential applications in real-world field implementations. One of the key limitations of the Sterile Insect Technique (SIT) is the daily production capacity of sterile males. Compared to a "naive strategy" in which health authorities release sterile males over a growing disc of radius $R + ct$ (i.e., the region $\lbrace |x| \leq R + ct \rbrace$), the "annulus strategy" allows for coverage of a larger area using the same or fewer resources. Indeed, over a fixed time interval $[0, T]$, the "naive strategy" requires $O(T^3)$ sterile males, whereas the "rolling carpet strategy" only requires $O(T^2)$. In fact, for each strategy, the number of released sterile males during an interval $[0,T]$ denoted by $\mathcal{M}_s$ is gven by:
\begin{itemize}
    \item \textit{For the naive strategy :}
    \[ \mathcal{M}_s = \int_0^T \overline{\Lambda} \pi (r+ct)^2dt = O(T^3) \]
     \item \textit{For the annulus strategy : (this computation corresponds to the bistable and is similar for the monostable case)}
    \[ \mathcal{M}_s = \int_0^T m_s \pi\left( (r_1+ct)^2 - (r_2 + ct)^2\right)dt = \int_0^T \overline{\Lambda}  \pi (r_1-r_2)(r_1+r_2+2ct)dt = O(T^2).\]    
\end{itemize}

\subsection{Idea of the proof}

We first observe that due to the monotony of the system, there is a comparison principle for system \ref{syst} on the invariant set $[0,K]\times \RR^3_+$~:
\begin{lemma}\label{lem:invariant}
    The set $[0,K]\times \RR^3_+$ is invariant, i.e. if $0\leq E^0 \leq K$, $0\leq F^0$, $0\leq M^0$, $0\leq M_s^0$ then for all $t>0$, the solution of \eqref{syst} verifies $0\leq E(t) \leq K$, $0\leq F(t)$, $0\leq M(t)$, $0\leq M_s^0$.
\end{lemma}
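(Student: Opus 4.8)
The plan is to exploit the \emph{quasi-positivity} of the reaction terms on the boundary of the box $\Sigma := [0,K]\times \RR^3_+$, together with the observation that, once the equation for $M_s$ has been solved, the remaining subsystem for $(E,M,F)$ is cooperative. I would first decouple $M_s$, and then treat $(E,M,F)$.

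First I would handle $M_s$. Its equation $\partial_t M_s - \Delta M_s + \mu_s M_s = \Lambda$ is linear and decoupled from the other unknowns, with $\Lambda\geq 0$ and $M_s^0\geq 0$. Expressing $M_s$ via the Duhamel formula with the heat semigroup (or invoking the parabolic maximum principle directly), the nonnegativity of the data, of the source $\Lambda$, and of the heat kernel yields $M_s(t,\cdot)\geq 0$ for all $t>0$. Henceforth $M_s$ may be regarded as a fixed nonnegative function.

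Next I would check that, using $M_s\geq 0$, the reaction field of the $(E,M,F)$ system points inward on each face of $[0,K]\times\RR_+^2$ (the Nagumo condition): on $\{E=0\}$ the $E$-reaction equals $bF\geq 0$ while on $\{E=K\}$ it equals $-(\mu_E+\nu_E)K\leq 0$; on $\{M=0\}$ the $M$-reaction equals $(1-\rho)\nu_E E\geq 0$; and on $\{F=0\}$ the $F$-reaction equals $\rho\nu_E E\,\tfrac{M}{M+\gamma_s M_s}\Gamma(M+\gamma_s M_s)\geq 0$, since $E,M,M_s\geq 0$ and $\Gamma\geq 0$. Moreover, for $M_s$ fixed and nonnegative, the relevant off-diagonal derivatives of the $(E,M,F)$-reaction are nonnegative on $\Sigma$ (for instance $\partial_F f_E=b(1-E/K)\geq 0$, $\partial_E f_M\geq 0$, and $\partial_E f_F,\partial_M f_F\geq 0$), so this subsystem is cooperative on the box.

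I would then conclude that $\Sigma$ is invariant by the standard invariant-region argument for cooperative, quasi-positive parabolic systems. To make the coupling rigorous I would run a continuation (bootstrap) argument, letting $T^\ast$ be the supremum of times up to which $0\leq E\leq K$, $M\geq 0$ and $F\geq 0$ all hold, and showing $T^\ast=+\infty$: on $[0,T^\ast]$ the sign conditions above turn each scalar equation into a (parabolic, or for $E$ pointwise-ODE) problem whose source is nonnegative at the relevant face, so no component can be the first to cross its boundary. The non-diffusing equation for $E$ is treated pointwise in $x$, where $[0,K]$ is invariant for the scalar ODE as soon as $F\geq 0$. The main obstacle is precisely this circular coupling, since $E\geq 0$ requires $F\geq 0$ and conversely; it is resolved by the fact that all couplings relevant to positivity are \emph{nonnegative} (cooperativity), so the continuation argument forbids any first crossing. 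A minor point to dispatch is the mild singularity of $\tfrac{M}{M+\gamma_s M_s}$ at $M=M_s=0$: it stays bounded in $[0,1]$ and its product with $\Gamma$ remains nonnegative, which is all the face condition on $\{F=0\}$ needs.
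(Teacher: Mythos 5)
Your proof is correct and follows essentially the route the paper itself takes: the paper states Lemma \ref{lem:invariant} without a detailed proof, relying precisely on the quasi-positivity of the reaction terms on the faces of $[0,K]\times\RR^3_+$ and the monotone (cooperative) structure it computes immediately afterwards, with the comparison principle invoked as standard. Your write-up simply supplies the details the paper leaves implicit --- decoupling the linear $M_s$ equation first, checking the Nagumo face conditions, treating the non-diffusing $E$ equation as a pointwise-in-$x$ ODE, running the first-crossing continuation argument, and noting that $\frac{M}{M+\gamma_s M_s}\Gamma(M+\gamma_s M_s)$ stays bounded and nonnegative near $M=M_s=0$ --- and all of these steps are sound.
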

Notice that since the equation on $E$ does not have partial derivatives in the $x$ variable, the result of Lemma \ref{lem:invariant} is also true when $K$ is a function of $x$ and verifies \eqref{hyp:Khetero}.

Denoting,
\begin{align*}
&f_E(E,F,M,M_s) = b F \left(1 - \frac{E}{K}\right) - (\mu_E+\nu_E) E,
\qquad f_M(E,F,M,M_s) = (1-r) \nu_E E - \mu_M M,  \\
&f_F(E,F,M,M_s) = r \nu_E E \frac{M}{M+\gamma_s M_s}\Gamma(M+\gamma_s M_s) - \mu_F F, \qquad    
f_s(E,F,M,M_s) = \Lambda  - \mu_s M_s.
\end{align*}
We may rewrite system \eqref{syst} in the compact form
$$
\partial_t \mathbf{U} - \mathbb{D}\Delta \mathbf{U} = \mathbf{f}(\mathbf{U}):=
\begin{pmatrix}
    f_E(\mathbf{U}) \\ f_M(\mathbf{U})  \\  f_F(\mathbf{U})  \\ f_s(\mathbf{U})
\end{pmatrix},\ \text{ with } \mathbf{U} = \begin{pmatrix}
    E \\ M  \\ F \\ M_s
\end{pmatrix}, \quad \mathbb{D} = \begin{pmatrix}
    0 & 0 & 0 & 0 \\ 0 & D & 0 & 0  \\  0 & 0 & D & 0  \\  0 & 0 & 0 & D
\end{pmatrix}.
$$
After straightforward computations, we get
\begin{align*}
    &\frac{\partial f_E}{\partial F} \geq 0, \quad \frac{\partial f_M}{\partial E} \geq 0, \quad
    \frac{\partial f_F}{\partial E} \geq 0, \\
    &\frac{\partial f_F}{\partial M} = r\nu_E E \left(\frac{\gamma_s M_s}{(M+\gamma_s M_s)^2} \Gamma(M+\gamma_s M_s) + \frac{\gamma_s M_s}{M+\gamma_s M_s}\Gamma'(M+\gamma_s M_s)\right) \geq 0, \\
    &\frac{\partial f_F}{\partial M_s} = -r\nu_E E \frac{\gamma_s M}{(M+\gamma_s M_s)^2}\left(\Gamma(M+\gamma_s M_s) -(M+\gamma_s M_s)\Gamma'(M+\gamma_s M_s)\right).
\end{align*}
Clearly, with the choice of $\Gamma$ in the monostable case \eqref{eq:Gamma}, we have $\frac{\partial f_F}{\partial M_s}\leq 0$. In the bistable case, we compute
$$
\frac{\partial f_F}{\partial M_s} = -r\nu_E E \frac{\gamma_s M}{(M+\gamma_s M_s)^2}\left(1-e^{-\gamma(M+\gamma_s M_s)}(1+\gamma(M+\gamma_s M_s)\right) \leq 0,
$$
where we use the well-know inequality $1+x \leq e^x$. 

A consequence of these computations is that the system is monotone for the order relation of the cone $\RR^3_+\times \RR_-$~:
\begin{definition}
\begin{itemize}
    \item[(i)] For any vector $\mathbf{u}, \mathbf{v} \in \RR^4$, we define a partial order $\preceq$ such that $\mathbf{u}\preceq \mathbf{v}$ if and only if $u_i \leq v_i$ for $i\in \{1,2,3\}$ and $u_4\geq v_4$.
    \item[(ii)] We say that $\overline{\mathbf{U}}=(\barE,\barF,\barM,\barM_s)$ is a \textbf{super-solution} of system \eqref{syst}, if it verifies, in the distributional sense, $\partial_t \overline{\mathbf{U}} - \mathbb{D} \Delta \overline{\mathbf{U}} \succeq \mathbf{f}(\overline{\mathbf{U}})$ and $\overline{\mathbf{U}}(t=0) \succeq (E^0,F^0,M^0,M_s^0)$.

    We say that $\underline{\mathbf{U}}=(\underline{E},\underline{F},\underline{M},\underline{M_s})$ is a \textbf{sub-solution} of system \eqref{syst}, if it verifies, in the distributional sense, $\partial_t \underline{\mathbf{U}} - \mathbb{D} \Delta \mathbf{\underline{U}} \preceq \mathbf{f}(\underline{\mathbf{U}})$ and $\underline{\mathbf{U}}(t=0) \preceq (E^0,F^0,M^0,M_s^0)$.    
\end{itemize}
\end{definition}
It is standard to deduce the following comparison principle see e.g. \cite{PER,LEC2023}.
\begin{lemma}[Comparison principle]\label{lem:compar}
    Let us consider 
    $$
    0\leq E_1^0 \leq E_2^0 \leq K, \quad 0\leq M_1^0 \leq M_2^0, \quad 0\leq F_1^0 \leq F_2^0, \quad 0\leq M_{s,2}^0 \leq M_{s,1}^0.
    $$
    Suppose that $U_1 := (E_1,M_1,F_1,M_{s,1})$ is a sub-solution of \eqref{syst} with initial data $U_1^0 := (E_1^0,M_1^0,F_1^0,M_{s,1}^0)$, and $U_2 := (E_2,M_2,F_2,M_{s,2})$ is a super-solution of \eqref{syst} with initial data $U_2^0 := (E_2^0,M_2^0,F_2^0,M_{s,2}^0)$.
    Then, for all $t>0$, we have $U_1 \preceq U_2$.
\end{lemma}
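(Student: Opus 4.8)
The plan is to exploit precisely the cooperative (quasimonotone) structure that the sign computations just above establish, and then run a standard energy/Gronwall argument on the ``wrong-sign'' parts of the ordered difference. Writing $U_1=(E_1,M_1,F_1,M_{s,1})$ for the sub-solution and $U_2=(E_2,M_2,F_2,M_{s,2})$ for the super-solution, I would introduce the ordered difference
$$
w_1 := E_2-E_1,\qquad w_2 := M_2-M_1,\qquad w_3 := F_2-F_1,\qquad w_4 := M_{s,1}-M_{s,2},
$$
so that the desired conclusion $U_1\preceq U_2$ is exactly $w_i\ge 0$ for $i=1,\dots,4$ and all $t>0$, while the hypotheses on the initial data give $w_i(\cdot,0)\ge 0$. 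Subtracting the super-solution inequalities from the sub-solution ones componentwise (recalling that the order $\preceq$ reverses the inequality in the fourth component, and that $\Lambda$ cancels in the difference $f_s(U_1)-f_s(U_2)=-\mu_s w_4$), and applying the mean-value theorem to $\mathbf{f}$ along the segment $[U_1,U_2]$, I obtain linear parabolic differential inequalities of the form $\partial_t w_i - d_i\Delta w_i \ge \sum_{j} a_{ij}\, w_j$, with $d_1=0$ and $d_2=d_3=d_4=1$, where each coefficient $a_{ij}=\int_0^1 \partial_{u_j}f_i\big(U_1+s(U_2-U_1)\big)\,ds$ is bounded because $U_1,U_2$ remain in the invariant region of Lemma~\ref{lem:invariant} and $\Gamma,\Gamma'$ are bounded there.

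The key point is that \emph{every off-diagonal coefficient is nonnegative}: the computations preceding this lemma give $\partial_F f_E\ge 0$, $\partial_E f_M\ge 0$, $\partial_E f_F,\partial_M f_F\ge 0$ and $\partial_{M_s}f_F\le 0$, and the sign flip built into $w_4=M_{s,1}-M_{s,2}$ turns the last (negative) contribution into a term $(-\,a_{34})\,w_4$ with $-a_{34}\ge 0$; the $w_4$-equation decouples as $\partial_t w_4-\Delta w_4\ge-\mu_s w_4$. Thus $(w_i)_i$ solves a cooperative linear system with bounded coefficients and nonnegative initial data. To conclude I would test the $i$-th inequality against $-(w_i)_-$, where $(w_i)_-:=\max(-w_i,0)\ge 0$, and integrate in space. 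The diffusion term yields $-d_i\int|\nabla (w_i)_-|^2\,dx\le 0$ after integration by parts; the diagonal terms are controlled by $\|a_{ii}\|_\infty\int (w_i)_-^2$; and for the off-diagonal terms, writing $w_j=(w_j)_+-(w_j)_-$, the cooperative sign makes the $(w_j)_+$-contribution nonpositive while Young's inequality absorbs the $(w_j)_-$-contribution into $\sum_i\|(w_i)_-\|_{L^2}^2$. Summing over $i$ gives
$$
\frac{d}{dt}\sum_{i=1}^4\big\|(w_i)_-\big\|_{L^2}^2 \;\le\; C\sum_{i=1}^4\big\|(w_i)_-\big\|_{L^2}^2 ,
$$
and since $(w_i)_-(\cdot,0)=0$, Gronwall's lemma forces $\sum_i\|(w_i)_-\|_{L^2}^2\equiv 0$, i.e. $w_i\ge 0$ for all $t$, which is the claim.

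The \textbf{main technical obstacle} is not the structure but the fact that the problem is posed on the unbounded domain $\RR^2$: the differences $w_i$ are merely bounded, so the $L^2$ integrations and the integration by parts above are not immediately justified. I would handle this in the standard way, either by inserting a localizing cutoff $\chi_R$ (with $|\nabla\chi_R|,|\Delta\chi_R|=O(1/R)$) and letting $R\to\infty$ so that the commutator terms coming from $[\Delta,\chi_R]$ vanish, or equivalently by running the same estimate in a weighted space $L^2(\RR^2;e^{-\varepsilon\langle x\rangle}\,dx)$; in the applications of this lemma the relevant differences are in fact localized, so either device closes the estimate. This is exactly the routine part for which the references \cite{PER,LEC2023} are cited, the only genuinely model-specific ingredient being the cooperativity with respect to the cone $\RR^3_+\times\RR_-$ verified just above.
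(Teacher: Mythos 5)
The paper does not actually prove this lemma---it is dispatched with the citation ``It is standard to deduce \dots see e.g.\ \cite{PER,LEC2023}''---so your proposal cannot be matched line by line against an in-paper argument; what you supply is the standard proof those references have in mind, and its architecture (the order-reversing unknown $w_4=M_{s,1}-M_{s,2}$, cancellation of $\Lambda$ so that $\partial_t w_4-\Delta w_4\geq -\mu_s w_4$ decouples, the cooperative linear system obtained by the mean-value theorem, the negative-part energy estimate with a cutoff or exponential weight to handle $\RR^2$, and Gronwall) is correct and is exactly the right route. In the bistable case it is fully rigorous: there the mating term is genuinely Lipschitz near the origin, since $\Gamma(u)=1-e^{-\gamma u}$ gives $\frac{\gamma_s M_s}{(M+\gamma_s M_s)^2}\Gamma(M+\gamma_s M_s)\leq \gamma$ and $\bigl|\Gamma(u)-u\Gamma'(u)\bigr|\leq \tfrac{1}{2}\gamma^2u^2$, which kills the apparent singularity in $\partial_M f_F$ and $\partial_{M_s}f_F$.

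There is, however, a genuine gap in the sentence claiming that each $a_{ij}=\int_0^1\partial_{u_j}f_i(U_1+s(U_2-U_1))\,ds$ is bounded ``because $\Gamma,\Gamma'$ are bounded''. In the monostable case $\Gamma\equiv 1$ of \eqref{eq:Gamma}, the problem is not $\Gamma$ but the fraction $\frac{M}{M+\gamma_s M_s}$, which is discontinuous at $M=M_s=0$ and has unbounded gradient nearby: $\partial_M f_F=\rho\nu_E E\,\frac{\gamma_s M_s}{(M+\gamma_s M_s)^2}$ behaves like $\varepsilon^{-1}$ along $M=M_s=\varepsilon$ with $E$ of order one. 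So if the segment $[U_1,U_2]$ passes near the origin in the $(M,M_s)$-variables, the mean-value representation is not available with bounded coefficients, and the Gronwall constant $C$ blows up; the estimate as written does not close. A partial repair is available within your own scheme: since the $w_4$-inequality is decoupled and linear, conclude $w_4\geq 0$ first by scalar comparison, then use $\partial_{M_s}f_F\leq 0$ to replace $M_{s,2}$ by $M_{s,1}$ in the lower bound for $f_F(U_2)-f_F(U_1)$, reducing to a cooperative $3\times 3$ system in $(w_1,w_2,w_3)$; but even then the Lipschitz constant of $M\mapsto \frac{M}{M+\gamma_s M_s}$ is of order $(\gamma_s M_s)^{-1}$, so one still needs either a regularization of the fraction, or a lower bound keeping $M+\gamma_s M_s$ away from zero on compact space-time sets along the comparison pair (which does hold for the specific sub- and super-solutions to which the lemma is applied in this paper, and which the localized version of your energy estimate can exploit since the cutoff only requires locally bounded coefficients). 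A second, more minor point: you invoke Lemma~\ref{lem:invariant} to bound $U_1,U_2$, but that lemma concerns solutions, not sub- and super-solutions, which need not remain in $[0,K]\times\RR^3_+$; one should either add boundedness (and nonnegativity of the relevant components) to the hypotheses or verify it for the functions at hand---as is indeed the case everywhere the lemma is used.
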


The idea of the proof of Theorem \ref{MainTheorem} is to use the classical sub- and super-solution technique. 

More precisely, for the sub-solution, we first construct an invading sub-solution in the case without sterile males (i.e. $m_s = 0$). In this order, we put the equation of the eggs at equilibrium and manage to find a sub-solution $(\underline{M}, \underline{F})$ mainly driven by $\underline{F}$. This sub-solution allows to prove Proposition \ref{prop:TWMs0}. Then, following similar arguments, we extend this kind of argument for the case where the sterile population is small $0<m_s\ll 1$, i.e. in the region where $|x|+ct$ large enough. 

Next, for the super-solution, we look for a radially symmetric super-solution that goes to $0$ in the set $\lbrace |x| < \underline{c}t \rbrace$ for any $\underline{c}<c$. 
To do so, we split the spatial domain into four subdomains. Let $\underline{c}<c'<c$ and $0<R_1<r_1<r_2<R_2$.
\begin{enumerate}
\item $\Omega^0_t = B_{r_1 + c't}$ (where $B_r$ denotes the ball of radius $r$ and center $0$) with $c' \in (\frac{\underline{c} + c}{2}, c)$ that will be fixed later on,
\item $\Omega^1_t = T(0, r_1 + c't, r_1 + ct )$  (where $T(z,r,R)$ denotes the annulus of center $z$, small radius $r$ and big radius $R$, i.e. $T(z,r,R)=\{x\in\RR^2,\ r\leq \|x-z\| \leq R\}$),
\item $\Omega^2_t  = T(0, r_1 + ct, r_2 + ct)$ (it is the annulus of action), 
\item $\Omega^3_t = B_{r_2 + ct }^c$, the rest of the field. 
\end{enumerate}
Notice that $\mathbb{R}^2 = \overline{\Omega^0_t \cup \Omega^1_t\cup \Omega^2_t \cup \Omega^3_t}$. We underline that the distance $L = r_2 - r_1$ is not fixed yet. 

As mentioned above, since we suppose the diffusion to be constant, up to a rescaling, we may assume that the diffusion coefficient $D=1$. Therefore, for the sake of simplicity of the computations and the notations, we will always consider that $D=1$.

\section{Analysis of the model without sterile males}\label{sec:Ms0}

The aim of this section is to prove Proposition \ref{prop:TWMs0}.

\subsection{Stationary solution in a half space}
Let us consider the existence of stationary solutions in one dimension on $(0,+\infty)$. More precisely, we study the following system on $(0,+\infty)$
\begin{subequations}\label{steady}
\begin{align}\label{steady:E}
&0 = b F \left(1 - \frac{E}{K}\right) - (\mu_E+\nu_E) E \\
\label{steady:M}
& - M'' = (1-\rho) \nu_E E - \mu_M M  \\
\label{steady:F}
& - F'' = \rho \nu_E E \Gamma(M) - \mu_F F,
\end{align}
\end{subequations}
complemented with initial conditions $(E(0),M(0),F(0))=(0,0,0)$.
Notice that this system reduces to
$\displaystyle E = \frac{bF}{\frac{bF}{K}+\mu_E+\nu_E}$ and
\begin{subequations}\label{stat}
\begin{align}\label{stat:M}
  & - M'' = \frac{(1-\rho) \nu_E b F}{\frac{bF}{K}+\mu_E+\nu_E} - \mu_M M  \\
  \label{stat:F}
  & - F'' = \frac{\rho \nu_E b F}{\frac{bF}{K}+\mu_E+\nu_E} \Gamma(M) - \mu_F F.
\end{align}
\end{subequations}
We want to prove that under certain conditions on $\Gamma$, there exists a solution of \eqref{stat} in $(0,+\infty)$ which is such that $(M(0),F(0))=(0,0)$, $(M(+\infty),F(+\infty)) = (M^*,F^*)$, and $M$ and $F$ are nondecreasing on $(0,+\infty)$.
\begin{lemma}\label{lem:1}
  Let $\mu>0$ and $\psi$ be a nondecreasing continuous function on $(0,+\infty)$ with $\lim_{x\to +\infty} \psi(x) = \psi_\infty$. Then, there exists a nondecreasing solution of
  $$
  -u'' + \mu u =  \psi(x), \qquad u(0) = 0, \quad u(+\infty) = \frac{\psi_\infty}{\mu}.
  $$
  Moreover, we have the estimate
  $$
  \forall\, x\in(0,+\infty),\qquad u(x) \geq \frac{1}{2\mu} \psi(x) \left(1-e^{-2\sqrt{\mu}x}\right).
  $$
\end{lemma}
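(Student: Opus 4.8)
The plan is to construct the solution explicitly by the variation-of-parameters (Green's function) formula for the operator $-u''+\mu u$ on the half-line, and then to read off the three required properties (the limit at $+\infty$, monotonicity, and the quantitative lower bound) directly from that representation. The two homogeneous solutions adapted to the boundary conditions are $\sinh(\sqrt\mu\, x)$, which vanishes at $0$, and $e^{-\sqrt\mu\, x}$, which stays bounded at $+\infty$; their Wronskian is the constant $-\sqrt\mu$. I would therefore define
\begin{equation*}
u(x) = \frac{1}{\sqrt\mu}\left[e^{-\sqrt\mu\, x}\int_0^x \sinh(\sqrt\mu\, s)\,\psi(s)\,ds + \sinh(\sqrt\mu\, x)\int_x^{+\infty} e^{-\sqrt\mu\, s}\,\psi(s)\,ds\right].
\end{equation*}
Since $\psi$ is nondecreasing with a finite limit it is bounded, so the improper integral converges and $u$ is well defined. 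I also note that $\psi\ge 0$ in the application to \eqref{stat}; this sign condition is used below for monotonicity and for the lower bound, and it is in fact necessary, since a constant negative right-hand side produces a decreasing solution violating both claims.

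First I would verify that $u$ solves the problem: when differentiating, the terms coming from the variable integration limits are equal and opposite and hence cancel, and since $\sinh(\sqrt\mu\,\cdot)$ and $e^{-\sqrt\mu\,\cdot}$ both solve $-v''+\mu v=0$, a second differentiation yields $-u''+\mu u=\psi$, while $u(0)=0$ is immediate. The only genuinely analytic point is the value at $+\infty$: each of the two products is an indeterminate form (a large, resp. small, exponential multiplying an integral with the opposite behaviour). I would evaluate it by writing $\psi=\psi_\infty+(\psi-\psi_\infty)$: the constant part contributes $\frac{\psi_\infty}{2\mu}$ from each of the two terms, giving $\frac{\psi_\infty}{\mu}$ in total, and the remainder is controlled using $\psi-\psi_\infty\to 0$ together with a splitting of the integration range (equivalently, one studies $w:=u-\psi_\infty/\mu$, which solves $-w''+\mu w=\psi-\psi_\infty$ with right-hand side vanishing at infinity, and shows $w\to 0$). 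I expect this limit to be the main obstacle, as it is the only step where boundedness of $\psi$ is not enough and one must genuinely exploit the convergence $\psi\to\psi_\infty$.

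For monotonicity I would differentiate the representation; after the boundary terms cancel,
\begin{equation*}
u'(x) = \cosh(\sqrt\mu\, x)\int_x^{+\infty} e^{-\sqrt\mu\, s}\,\psi(s)\,ds - e^{-\sqrt\mu\, x}\int_0^x \sinh(\sqrt\mu\, s)\,\psi(s)\,ds.
\end{equation*}
Using that $\psi$ is nondecreasing, I bound the first integral below by $\psi(x)\int_x^{+\infty}e^{-\sqrt\mu\, s}\,ds$ and the subtracted integral above by $\psi(x)\int_0^x \sinh(\sqrt\mu\, s)\,ds$; evaluating the two elementary primitives, the $\cosh$ contributions cancel and one is left with exactly $\frac{\psi(x)}{\sqrt\mu}\,e^{-\sqrt\mu\, x}\ge 0$, so $u'\ge 0$.

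Finally, for the estimate I would discard the first (nonnegative) term in the representation of $u$ and bound the second from below using $\psi(s)\ge\psi(x)$ for $s\ge x$:
\begin{equation*}
u(x)\ \ge\ \frac{1}{\sqrt\mu}\,\sinh(\sqrt\mu\, x)\,\psi(x)\int_x^{+\infty}e^{-\sqrt\mu\, s}\,ds = \frac{1}{\mu}\,e^{-\sqrt\mu\, x}\sinh(\sqrt\mu\, x)\,\psi(x),
\end{equation*}
and the identity $e^{-\sqrt\mu\, x}\sinh(\sqrt\mu\, x)=\tfrac12\bigl(1-e^{-2\sqrt\mu\, x}\bigr)$ gives precisely $u(x)\ge \frac{1}{2\mu}\psi(x)\bigl(1-e^{-2\sqrt\mu\, x}\bigr)$, as claimed.
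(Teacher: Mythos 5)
Your proposal is correct and follows essentially the same route as the paper: the identical variation-of-parameters representation built from $\sinh(\sqrt{\mu}\,x)$ and $e^{-\sqrt{\mu}\,x}$, the same monotonicity bound on $u'$ using that $\psi$ is nondecreasing, and the same lower bound obtained by discarding the first term and using $\psi(s)\ge\psi(x)$ for $s\ge x$. Your only additions are a careful verification of the limit $u(+\infty)=\psi_\infty/\mu$ (which the paper leaves as a ``straightforward computation'') and the explicit observation that $\psi\ge 0$ is implicitly needed for the monotonicity and the estimate, a hypothesis the paper also uses tacitly and which holds in the application.
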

\begin{proof}
  Indeed, after straightforward computations, the solution is given by the expression
  $$
  u(x) = \frac{1}{\sqrt{\mu}} \left(\int_x^{+\infty} \psi(y) e^{-\sqrt{\mu}y}\,dy \sinh(\sqrt{\mu} x) + \int_0^x \psi(y) \sinh(\sqrt{\mu}y)\,dy e^{-\sqrt{\mu}x}\right).
  $$
  From this expression, we clearly deduce from the nonnegativity of $\psi$ that $u(x)\geq 0$ for any $x\in(0,+\infty)$.
  Then, since $\psi$ is nondecreasing we have
  $$
  u(x) \geq \frac{1}{\sqrt{\mu}} \int_x^{+\infty} \psi(x) e^{-\sqrt{\mu}y}\,dy \sinh(\sqrt{\mu} x)
  = \frac{\psi(x)}{\mu} e^{-\sqrt{\mu}x} \sinh(\sqrt{\mu}x).
  $$
  This is the desired estimate.
  Finally, computing the derivative we obtain
  $$
  u'(x) = \int_x^{+\infty} \psi(y) e^{-\sqrt{\mu}y}\,dy \cosh(\sqrt{\mu} x) - \int_0^x \psi(y) \sinh(\sqrt{\mu}y)\,dy e^{-\sqrt{\mu}x}.
  $$
  Using again the fact that $\psi$ is nondecreasing, we get
  $$
  u'(x) \geq \psi(x) \left(\int_x^{+\infty} e^{-\sqrt{\mu}y}\,dy \cosh(\sqrt{\mu} x) - \int_0^x \sinh(\sqrt{\mu}y)\,dy e^{-\sqrt{\mu}x}\right) = \frac{1}{\sqrt{\mu}} e^{-\sqrt{\mu}x} > 0.
  $$
  Hence $u$ is nondecreasing.
\end{proof}

\begin{lemma}\label{lem:2}
  Assume $(M,F)$ is a solution of \eqref{stat} such that $0\leq M \leq M^*$ and $0\leq F \leq F^*$ and $M(0)=0$, $F(0)=0$. Then, we have
  $$
  M \leq M^* (1-e^{-\sqrt{\mu_M} x}), \qquad
  F \leq F^* (1-e^{-\sqrt{\mu_F} x}).
  $$
\end{lemma}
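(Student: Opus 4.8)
The plan is to decouple \eqref{stat:M} and \eqref{stat:F} into two scalar linear equations of the form $-u''+\mu u=\psi$, to bound each source term by the \emph{constant} value attached to the equilibrium, and then to compare with the explicit barrier furnished by Lemma \ref{lem:1}. Concretely, I would rewrite \eqref{stat:M} as $-M''+\mu_M M=\psi_M$ and \eqref{stat:F} as $-F''+\mu_F F=\psi_F$, where
\[
\psi_M(x):=\frac{(1-\rho)\nu_E b F(x)}{\frac{bF(x)}{K}+\mu_E+\nu_E},\qquad
\psi_F(x):=\frac{\rho\nu_E b F(x)}{\frac{bF(x)}{K}+\mu_E+\nu_E}\,\Gamma(M(x)).
\]

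The crux of the argument is the sharp bound on these sources. The maps $F\mapsto\frac{(1-\rho)\nu_E bF}{\frac{bF}{K}+\mu_E+\nu_E}$ and $F\mapsto\frac{\rho\nu_E bF}{\frac{bF}{K}+\mu_E+\nu_E}$ are increasing on $[0,+\infty)$, and $\Gamma$ is nondecreasing in both cases of \eqref{eq:Gamma}. Hence, using the hypotheses $F\le F^*$, $M\le M^*$ together with the identity $E^*=\frac{bF^*}{\frac{bF^*}{K}+\mu_E+\nu_E}$ (the steady $E$-equation), I obtain
\[
\psi_M(x)\le (1-\rho)\nu_E E^*=\mu_M M^*,\qquad
\psi_F(x)\le \rho\nu_E E^*\,\Gamma(M^*)=\mu_F F^*,
\]
the two final equalities being exactly the $M$- and $F$-equations of \eqref{systODE} evaluated at $(E^*,M^*,F^*)$. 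I would emphasize that for $\psi_F$ one must use $\Gamma(M)\le\Gamma(M^*)$ and not the crude bound $\Gamma\le 1$: in the bistable case $\Gamma(M^*)<1$, and only the sharp bound produces the constant $\mu_F F^*$ needed to close the estimate.

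Next I observe that $W_M(x):=M^*(1-e^{-\sqrt{\mu_M}x})$ solves $-W_M''+\mu_M W_M=\mu_M M^*$ with $W_M(0)=0$, which is precisely the solution produced by Lemma \ref{lem:1} applied to the constant source $\psi\equiv\mu_M M^*$ (so that $\psi_\infty/\mu_M=M^*$); symmetrically $W_F(x):=F^*(1-e^{-\sqrt{\mu_F}x})$ solves $-W_F''+\mu_F W_F=\mu_F F^*$, $W_F(0)=0$. It then remains to compare $M$ with $W_M$ and $F$ with $W_F$. Setting $v:=W_M-M$, one has $v(0)=0$, $v$ bounded (since $0\le M,W_M\le M^*$), and $-v''+\mu_M v=\mu_M M^*-\psi_M\ge 0$, so that the desired inequality $M\le W_M$ amounts to $v\ge 0$.

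The one genuinely delicate point is that this comparison lives on the unbounded half-line, where a bare maximum principle does not apply directly: I must rule out the infimum of $v$ escaping to $+\infty$. I would handle this by a standard barrier perturbation: for $\varepsilon>0$ set $v_\varepsilon:=v+\varepsilon\cosh(\sqrt{\mu_M}x)$, which still satisfies $-v_\varepsilon''+\mu_M v_\varepsilon\ge 0$, has $v_\varepsilon(0)=\varepsilon>0$, and tends to $+\infty$; hence it attains its minimum over $[0,+\infty)$ at a finite point, and were that minimum negative it would be interior, forcing $v_\varepsilon''\ge 0$ and thus $-v_\varepsilon''+\mu_M v_\varepsilon<0$, a contradiction. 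Therefore $v_\varepsilon\ge 0$, and letting $\varepsilon\to 0$ yields $v\ge 0$, i.e. $M\le M^*(1-e^{-\sqrt{\mu_M}x})$; the identical argument with $(\mu_M,M^*)$ replaced by $(\mu_F,F^*)$ gives $F\le F^*(1-e^{-\sqrt{\mu_F}x})$. Alternatively, since $M$ is the unique \emph{bounded} solution of $-M''+\mu_M M=\psi_M$ vanishing at $0$, it coincides with the representation formula in the proof of Lemma \ref{lem:1}, whose kernel is nonnegative; the monotone dependence of that formula on its source then deduces $M\le W_M$ directly from $\psi_M\le \mu_M M^*$, bypassing the maximum principle altogether.
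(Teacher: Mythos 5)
Your proof is correct and takes essentially the same route as the paper: you bound the reaction terms by their sharp equilibrium values $(1-\rho)\nu_E E^*=\mu_M M^*$ and $\rho\nu_E E^*\,\Gamma(M^*)=\mu_F F^*$ (correctly noting that $\Gamma(M)\le\Gamma(M^*)$, not the crude $\Gamma\le 1$, is what closes the estimate in the bistable case) and then compare with the explicit barriers $M^*(1-e^{-\sqrt{\mu_M}x})$ and $F^*(1-e^{-\sqrt{\mu_F}x})$, which is exactly the paper's super-solution argument. The only difference is that you make the comparison on the unbounded half-line rigorous (via the $\varepsilon\cosh(\sqrt{\mu_M}x)$ perturbation, or alternatively the positivity of the kernel from Lemma \ref{lem:1}), a step the paper leaves implicit.
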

\begin{proof}
  Indeed, under the assumptions of the Lemma, we have
  $$
  \frac{\rho \nu_E b F}{\frac{bF}{K}+\mu_E+\nu_E} \Gamma(M) \leq \frac{\rho \nu_E b F^*}{\frac{bF^*}{K}+\mu_E+\nu_E} \Gamma(M^*) = \mu_F F^*.
  $$
  Therefore, the solution of the equation
  $$
  -\bar{F}'' = \mu_F (F^* - \bar{F}), \qquad  \overline{F}(0) = 0, \quad \overline{F}(+\infty) = F^*
  $$
  is a super-solution of \eqref{stat:F}. Hence,
  $$
  F(x) \leq \bar{F}(x) = F^* (1-e^{-\sqrt{\mu_F}x}).
  $$
  The proof is the same for the estimate on $M$.
\end{proof}

Using these two preliminary results we obtain an interesting estimate: if $F$ is nondecreasing on $(0,+\infty)$, we deduce that the function
$$
\psi(x) = \frac{(1-\rho) \nu_E b F(x)}{\frac{bF(x)}{K}+\mu_E+\nu_E} 
$$
is nondecreasing and we may apply the result of Lemma \ref{lem:1}.
We deduce that
\begin{equation}\label{eq:phi:int1}
 \frac{1}{2\mu_M} \psi(x) \left(1-e^{-2\sqrt{\mu_M}x}\right) \leq M(x).
\end{equation}
Moreover, from Lemma \ref{lem:2}, we have
$$
F(x) \leq F^* \left(1-e^{-\sqrt{\mu_F}x}\right),
$$
which is equivalent to
\begin{equation}\label{eq:phi:int2}
x \geq - \frac{1}{\sqrt{\mu_F}} \ln \left(1-\frac{F}{F^*}\right).
\end{equation}
Finally, denoting
\begin{equation}\label{def:phi}
\phi(F) = \frac{1}{2\mu_M}\frac{(1-\rho) \nu_E b F}{\frac{bF}{K}+\mu_E+\nu_E} \left(1 - \exp\left(2 \sqrt{\frac{\mu_M}{\mu_F}}\ln(1-\frac{F}{F^*})\right)\right),
\end{equation}
and combining \eqref{eq:phi:int1} and \eqref{eq:phi:int2}, it follows that
$$
\phi(F) \leq M.
$$
Therefore, let us consider the following system in $(0,+\infty)$
\begin{subequations}\label{syst:subsol}
\begin{align}  \label{subsol:E}
  & \underline{E} = \frac{b \underline{F}}{\frac{b\underline{F}}{K}+\mu_E+\nu_E}   \\
  \label{subsol:M}
  & -\underline{M}'' = (1-\rho)\nu_E \underline{E} - \mu_M \underline{M}  \\
  \label{subsol:F}
& -\underline{F}'' = \frac{\rho \nu_E b \underline{F}}{\frac{b \underline{F}}{K} + \mu_E + \nu_E} \Gamma(\phi(\underline{F})) - \mu_F \underline{F},
\end{align}
\end{subequations}
complemented with initial condition $(\underline{E}(0),\underline{M}(0),\underline{F}(0))=(0,0,0)$. We recall that the expressions of $\phi$ and $\Gamma$ in the bistable case are given in \eqref{def:phi} and \eqref{eq:Gamma}.

\begin{proposition}\label{prop:exist}
  Let us assume $\gamma>\gamma_c$ and that the following condition holds
  \begin{equation}\label{condition}
    \int_0^{F^*} \left(\frac{\rho\nu_E b u}{\frac{b u}{K} + \mu_E + \nu_E} \Gamma(\phi(u)) - \mu_F u \right)\,du > 0.
  \end{equation}
  Then, system \eqref{syst:subsol} with zero initial condition admits a solution $(\underline{E},\underline{M}, \underline{F})$ which is bounded and nondecreasing (in the sense that $\underline{E}$, $\underline{M}$ and $\underline{F}$ are bounded and nondecreasing). Moreover, there exists $(\underline{E_m},\underline{M_m},\underline{F_m})>(E_1^*,M_1^*,F_1^*)$ such that $(\underline{E},\underline{M}, \underline{F})$ converges to $(\underline{E_m},\underline{M_m},\underline{F_m})$ at $x\to +\infty$.
\end{proposition}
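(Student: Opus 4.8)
The plan is to exploit the triangular structure of \eqref{syst:subsol}: equation \eqref{subsol:F} for $\underline F$ is a \emph{scalar} semilinear ODE, decoupled from $\underline M$ and $\underline E$. Writing $g(F) := \frac{\rho\nu_E b F}{\frac{bF}{K}+\mu_E+\nu_E}\,\Gamma(\phi(F)) - \mu_F F$, so that \eqref{subsol:F} reads $-\underline F'' = g(\underline F)$, I would first solve this scalar problem by a Hamiltonian (energy) argument and then recover $\underline E$ and $\underline M$ a posteriori. Introducing the potential $G(F) := \int_0^F g(s)\,ds$, condition \eqref{condition} is exactly $G(F^*) > 0$.

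\emph{Step 1 (shape of $g$).} Since the factor $1-(1-F/F^*)^{2\sqrt{\mu_M/\mu_F}}$ in \eqref{def:phi} vanishes linearly at $0$, one has $\phi(F)=O(F^2)$, hence $\Gamma(\phi(F))=1-e^{-\gamma\phi(F)}=O(F^2)$ and $g(F)=-\mu_F F + O(F^3)$; in particular $g'(0)=-\mu_F<0$, so $G$ decreases on a right-neighbourhood of $0$. Next I compare $g$ with the equilibrium reaction of the full ODE system, $g_0(F):=\frac{\rho\nu_E bF}{\frac{bF}{K}+\mu_E+\nu_E}\,\Gamma(M_{\mathrm{eq}}(F))-\mu_F F$, where $M_{\mathrm{eq}}(F)=\frac{(1-\rho)\nu_E}{\mu_M}\frac{bF}{\frac{bF}{K}+\mu_E+\nu_E}$, whose zeros are precisely $0,F_1^*,F^*$ by Lemma \ref{lem:stat}. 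From \eqref{def:phi} one reads $\phi(F)=\frac12 M_{\mathrm{eq}}(F)\bigl(1-(1-F/F^*)^{2\sqrt{\mu_M/\mu_F}}\bigr)\le \frac12 M_{\mathrm{eq}}(F)<M_{\mathrm{eq}}(F)$ on $(0,F^*]$; since $\Gamma$ is increasing, this gives the pointwise comparison $g(F)<g_0(F)$ on $(0,F^*]$, and in particular $g(F^*)<g_0(F^*)=0$.

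\emph{Step 2 (target equilibrium and quadrature).} Because $g'(0)<0$, $g(F^*)<0$ and $G(F^*)>0$, the continuous function $G$ attains its maximum over $[0,F^*]$ at an interior point (it cannot be at $0$ since $G(0)=0<\max G$, nor at $F^*$ since $G$ is strictly decreasing there). Let $\underline{F_m}\in(0,F^*)$ be the smallest such maximiser; then $g(\underline{F_m})=G'(\underline{F_m})=0$, $G(\underline{F_m})\ge G(F^*)>0$, and $G(s)<G(\underline{F_m})$ for all $s\in[0,\underline{F_m})$. Moreover $g<g_0<0$ fails only beyond $F_1^*$: as $g_0<0$ on $(0,F_1^*]$ we get $g<0$ there, so the zero $\underline{F_m}$ satisfies $\underline{F_m}>F_1^*$. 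With the conserved energy $\frac12(\underline F')^2+G(\underline F)=G(\underline{F_m})$ in mind, I define $\underline F$ implicitly by $x=\int_0^{\underline F(x)}\frac{ds}{\sqrt{2\,(G(\underline{F_m})-G(s))}}$. The integrand is positive and continuous on $[0,\underline{F_m})$, and because $g(\underline{F_m})=0$ with $g$ Lipschitz one has $G(\underline{F_m})-G(s)=O\bigl((\underline{F_m}-s)^2\bigr)$, so the quadrature diverges at $\underline{F_m}$. Hence $\underline F:[0,+\infty)\to[0,\underline{F_m})$ is well defined, smooth, strictly increasing, with $\underline F(0)=0$ and $\underline F(+\infty)=\underline{F_m}$; differentiating the energy identity yields $\underline F'=\sqrt{2(G(\underline{F_m})-G(\underline F))}>0$ and $-\underline F''=g(\underline F)$, i.e. \eqref{subsol:F}, and $\underline F$ is bounded and nondecreasing.

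\emph{Step 3 (recovering $\underline E,\underline M$ and conclusion).} Set $\underline E:=\frac{b\underline F}{\frac{b\underline F}{K}+\mu_E+\nu_E}$, which is nondecreasing, bounded, and converges to $\underline{E_m}:=\frac{b\underline{F_m}}{\frac{b\underline{F_m}}{K}+\mu_E+\nu_E}$; then $\psi:=(1-\rho)\nu_E\underline E$ is continuous, nondecreasing with finite limit, so Lemma \ref{lem:1} (with $\mu=\mu_M$) furnishes a nondecreasing bounded solution $\underline M$ of \eqref{subsol:M} with $\underline M(0)=0$ and $\underline M(+\infty)=\underline{M_m}:=\frac{(1-\rho)\nu_E\underline{E_m}}{\mu_M}$. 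Finally, monotonicity of $F\mapsto\underline E$ and of $E\mapsto\frac{(1-\rho)\nu_E E}{\mu_M}$ transports $\underline{F_m}>F_1^*$ into $\underline{E_m}>E_1^*$ and $\underline{M_m}>M_1^*$, giving the strict ordering $(\underline{E_m},\underline{M_m},\underline{F_m})>(E_1^*,M_1^*,F_1^*)$. The main obstacle lies in Step 2: one must check that the energy level $G(\underline{F_m})$ produces an orbit reaching $\underline{F_m}$ only at $x=+\infty$ (divergence of the quadrature, resting on $g(\underline{F_m})=0$) while never stalling at an interior turning point (resting on the strict inequality $G(s)<G(\underline{F_m})$ on $[0,\underline{F_m})$, itself a consequence of $g'(0)<0$ together with $G(F^*)>0$); the comparison $g<g_0$ in Step 1, which pushes $\underline{F_m}$ above the unstable equilibrium $F_1^*$, is the other delicate ingredient.
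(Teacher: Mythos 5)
Your proposal is correct and takes essentially the same route as the paper: reduce to the scalar equation for $\underline F$, introduce the potential $G$ with $G(F^*)>0$ by \eqref{condition}, take $\underline{F_m}$ as the (smallest) maximiser, build the monotone orbit from the energy identity $\frac12(\underline F')^2+G(\underline F)=G(\underline{F_m})$, recover $\underline E$ algebraically and $\underline M$ from Lemma \ref{lem:1}, and obtain $\underline{F_m}>F_1^*$ from the comparison $\phi\le\phi_0$ together with \eqref{rel_stat3}. The only (cosmetic, and in fact slightly more careful) deviations are that you construct $\underline F$ by the explicit quadrature with a divergence estimate at $\underline{F_m}$ instead of invoking Cauchy--Lipschitz for $\underline F'=\sqrt{2\bigl(G(\underline{F_m})-G(\underline F)\bigr)}$, whose right-hand side degenerates at $\underline{F_m}$, and that you deduce $\underline{F_m}>F_1^*$ from $g<g_0\le 0$ on $(0,F_1^*]$ rather than from $G<0$ there.
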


\begin{proof}
  From Lemma \ref{lem:stat}, the condition $\gamma>\gamma_c$ guarantees the existence of $0<F_1^*<F^*$, stationary solutions.
  We first notice that it suffices to prove the result for the solution $\underline{F}$.
  Indeed, if there exists a solution $\underline{F}$ of \eqref{subsol:F} with $\underline{F}(0)=0$, $\underline{F}$ bounded and nondecreasing. 
  It is clear from \eqref{subsol:E} that $\underline{E}$ is bounded and nondecreasing.
  Using Lemma \ref{lem:1}, we deduce that there exists a bounded solution $\underline{M}$ of \eqref{subsol:M} with $\underline{M}(0)=0$ which is nondecreasing.

  Thus, let us consider equation \eqref{subsol:F}. We define
  $$
  G(F) = \int_0^{F} \left(\frac{\rho\nu_E b u}{\frac{b u}{K} + \mu_E + \nu_E} \Gamma(\phi(u)) - \mu_F u \right)\,du.
  $$
  The function $G$ is continuous on $[0,F^*]$ with $G(0) = 0$ and $G(F^*)>0$ by assumption \eqref{condition}. Thus there exists $\underline{F_m} \in (0,F^*]$ such that $G(\underline{F_m}) = \max_{[0,F^*]} G$ (if such a point is not unique, we define $\underline{F_m}$ as the smallest one such that for all $F\leq \underline{F_m}$, $G(F)<G(\underline{F_m})$).
  Then, we define $\underline{F}$ as the solution of the Cauchy problem
  $$
  \underline{F}' = \sqrt{2\Big(G(\underline{F_m})-G(\underline{F})\Big)}, \qquad \underline{F}(0)=0.
  $$
  By the Cauchy-Lipschitz theorem, there exists a unique solution to this equation, it is nondecreasing and bounded by $\underline{F_m}$ (indeed $F_m$ is a stationary solution). Then it admits a limit as $x$ goes to $+\infty$, which should be a stationary solution; the unique possible limit is $\underline{F_m}$.
  Moreover, this solution verifies
  $$
  \frac 12 (\underline{F}')^2 + G(\underline{F}) = G(\underline{F_m}).
  $$
  Deriving this expression, we get that $\underline{F}$ is a solution of \eqref{subsol:F}. 

  Then, we define $\underline{E_m}=\frac{b\underline{F_m}}{\frac{b\underline{F_m}}{K}+\mu_E+\nu_E}$ and $\underline{M_m} = \frac{(1-\rho)\nu_E}{\mu_M} \underline{E_m}$. 
  We construct $\underline{E}$ by \eqref{subsol:E} and $\underline{M}$ by solving \eqref{subsol:M} as stated in Lemma \ref{lem:1} such that $(\underline{E},\underline{M},\underline{F})$ converges to $(\underline{E_m},\underline{M_m},\underline{F_m})$ as $x$ goes to $+\infty$. To conclude the proof we are left to show the inequality $\underline{F_m}> F_1^*$, which implies straightforwardly the inequalities $\underline{E_m}> E_1^*$ and $\underline{M_m}> M_1^*$.
  We first observe from the definitions \eqref{def:phi0} and \eqref{def:phi} that, for all $F\in (0,F^*)$,
  $$
  \phi(F) \leq \phi_0(F).
  $$
  Hence, we deduce from \eqref{rel_stat3} (see Appendix) that in $(0,F_1^*)$ we have
  $$
  \frac{\rho\nu_E b}{\frac{b F}{K} + \mu_E + \nu_E} \Gamma(\phi_0(F)) - \mu_F < 0.
  $$
  From the monotony of $\Gamma$, we obtain then that on $(0,F_1^*)$ we have
  $$
  \frac{\rho\nu_E b}{\frac{b F}{K} + \mu_E + \nu_E} \Gamma(\phi(F)) - \mu_F < 0.
  $$
  Hence, we deduce from the expression of $G$ that $G(F)<0$ on $(0,F_1^*)$. 
  Since $G(\underline{F_m})>0$, we conclude that $F_1^*<\underline{F_m}$.
\end{proof}

\begin{remark}\label{rem:condition}
  Concerning condition \eqref{condition}, we first notice that actually it is enough to have the existence of a real $X>0$ such that
  $$
  \int_0^{X} \left(\frac{\rho\nu_E b u}{\frac{b u}{K} + \mu_E + \nu_E} \Gamma(\phi(u)) - \mu_F u \right)\,du > 0.
  $$
  Next, one may wonder whether it is possible to satisfy condition \eqref{condition}. Indeed, for instance, for $\gamma=0$ we have by definition $\Gamma=0$ and therefore \eqref{condition} can never be satisfied. However, we observe that when $\gamma\to +\infty$, the function $\Gamma(x)$ converges to $1$ for all $x>0$ and is bounded by $1$. Therefore, applying the dominated convergence theorem, we get that
  $$
  \lim_{\gamma\to +\infty} \int_0^{F^*} \frac{\rho\nu_E b u}{\frac{b u}{K} + \mu_E + \nu_E} \Gamma(\phi(u))\,du  = \int_0^{F^*} \frac{\rho\nu_E b u}{\frac{b u}{K} + \mu_E + \nu_E}\,du.
  $$
  And we verify easily that under the condition $\mathcal{N}>1$, \eqref{condition} is satisfied for $\Gamma=1$. As a consequence, \eqref{condition} holds true for $\gamma$ large enough.

  Moreover, since $\Gamma$ is increasing with respect to $\gamma$ and since it is proved in the Appendix that $F^*$ is also increasing with respect to $\gamma$, we notice that if \eqref{condition} holds for some $\gamma_0$, then it is also satisfied for any $\gamma>\gamma_0$ and that \eqref{eq:condition} allows to define $\gamma_0$ uniquely.
\end{remark}

\subsection{Proof of Proposition \ref{prop:TWMs0}}

We are now in position to construct a subsolution for system \eqref{steady}.
\begin{proposition}\label{prop:sub1D}
Let us assume $\gamma>\gamma_c$ and that \eqref{condition} holds.
Then, let us define for some $x_0\in \RR$ and for all $t>0$ and $x\in \RR$,
$$
\underline{\mathbf{E}}(t,x) = \underline{E}(x-x_0) \mathbf{1}_{x>x_0}; \quad
\underline{\mathbf{M}}(t,x) = \underline{M}(x-x_0) \mathbf{1}_{x>x_0}; \quad
\underline{\mathbf{F}}(t,x) = \underline{F}(x-x_0) \mathbf{1}_{x>x_0}.
$$
Then, for all $x_0\in \RR$, $(\underline{\mathbf{E}},\underline{\mathbf{M}},\underline{\mathbf{F}})$ is a subsolution of system \eqref{syst0} complemented with an initial data which is below this subsolution for some $x_0$.
\end{proposition}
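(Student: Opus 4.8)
The plan is to verify directly that the triple $(\underline{\mathbf{E}},\underline{\mathbf{M}},\underline{\mathbf{F}})$, built from the stationary profile of Proposition \ref{prop:exist} by translating it by $x_0$ and extending it by zero to the left, satisfies the differential inequalities defining a subsolution of \eqref{syst0}, component by component, and then to check the initial-data condition. Since the profiles are time-independent, every time derivative $\partial_t$ vanishes, so the subsolution inequalities reduce to spatial inequalities that I must check separately on $\{x>x_0\}$, on $\{x<x_0\}$, and across the interface $x=x_0$.

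First I would treat the region $\{x<x_0\}$, where all three functions are identically zero. There the left-hand side $\partial_t \underline{\mathbf{U}}-\mathbb{D}\Delta \underline{\mathbf{U}}$ vanishes, while the reaction terms $f_E,f_M,f_F$ evaluated at $(0,0,0)$ also vanish (for $F=0$ one has $f_E=0$, for $M=0,E=0$ one has $f_M=0$, and $f_F$ vanishes since $\Gamma(0)=0$ in the bistable case and $E=0$ in all cases); hence the inequalities hold with equality there. On $\{x>x_0\}$, by construction the translated profile solves the stationary system \eqref{syst:subsol} exactly. The key point is that \eqref{syst:subsol} is a \emph{relaxed} version of \eqref{syst0}: because the estimate $\phi(\underline{F})\le \underline{M}$ derived from Lemmas \ref{lem:1} and \ref{lem:2} holds and $\Gamma$ is nondecreasing, we have $\Gamma(\phi(\underline{F}))\le \Gamma(\underline{M})$, so replacing $\Gamma(\phi(\underline{F}))$ by $\Gamma(\underline{M})$ only increases the reaction term in the $F$-equation. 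Thus $-\underline{\mathbf{F}}''\le \rho\nu_E\underline{E}\,\Gamma(\underline{M})-\mu_F\underline{F}$, which is exactly the subsolution inequality for \eqref{syst0:F}; the $E$- and $M$-equations are satisfied with equality, and the ordering in the cone $\preceq$ reverses no inequalities since all three components carry the same sign convention here.

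The step I expect to require the most care is the interface $x=x_0$, where the functions have a corner. Because $\underline{M}$ and $\underline{F}$ are nonnegative, nondecreasing and vanish at $x_0$ while being zero to the left, each has a nonnegative jump in its first derivative at $x_0$: the left derivative is $0$ and the right derivative is strictly positive (this positivity of $u'(0^+)$ is exactly what Lemma \ref{lem:1} establishes). Consequently the distributional second derivative $\underline{\mathbf{M}}''$ (resp. $\underline{\mathbf{F}}''$) contains a nonnegative Dirac mass $\kappa\,\delta_{x_0}$ at the interface. In the distributional inequality $\partial_t\underline{\mathbf{U}}-\mathbb{D}\Delta\underline{\mathbf{U}}\preceq \mathbf{f}(\underline{\mathbf{U}})$ this produces a term $-\kappa\,\delta_{x_0}\le 0$ on the left-hand side, which only helps the inequality. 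I would make this rigorous by testing against a nonnegative test function and observing that the singular contribution has the favorable sign, so the inequality is preserved across $x_0$ in the distributional sense.

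Finally, for the initial data, the statement asserts existence of some $x_0$ for which the given initial datum of \eqref{syst0} lies above $(\underline{\mathbf{E}},\underline{\mathbf{M}},\underline{\mathbf{F}})$ at $t=0$. Since the translated profile is a fixed compactly-supported-from-the-left bounded function, and since we may choose $x_0$ as large as we like (pushing the support of the subsolution arbitrarily far to the right), any initial datum that is positive on a right half-line will dominate it for $x_0$ large enough; I would simply invoke this freedom in $x_0$ together with the boundedness of $(\underline{E},\underline{M},\underline{F})$ by $(\underline{E_m},\underline{M_m},\underline{F_m})$ to conclude. The whole argument is elementary once the relaxation inequality $\Gamma(\phi(\underline F))\le\Gamma(\underline M)$ and the sign of the interface jump are in hand; these two observations are the substance of the proof, and everything else is verification.
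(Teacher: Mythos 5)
Your proof has the same skeleton as the paper's: equality on $\{x<x_0\}$ and in the $E$- and $M$-equations on $\{x>x_0\}$, the relaxation inequality $\Gamma(\phi(\underline{F}))\le\Gamma(\underline{M})$ for the $F$-equation, and a nonnegative derivative jump at $x=x_0$ (your Dirac-mass formulation is just the distributional spelling-out of the paper's one-line check $\partial_x\underline{\mathbf{M}}(x_0^+)=\underline{M}'(0^+)\ge 0=\partial_x\underline{\mathbf{M}}(x_0^-)$). However, there is one genuine gap in justification at precisely the step you call the substance of the proof: you assert that $\phi(\underline{F})\le\underline{M}$ is ``derived from Lemmas \ref{lem:1} and \ref{lem:2}'', but Lemma \ref{lem:2} is stated only for solutions of the \emph{true} stationary system \eqref{stat}, whereas $(\underline{M},\underline{F})$ solves the \emph{modified} system \eqref{syst:subsol}, whose $F$-equation carries $\Gamma(\phi(\underline{F}))$ in place of $\Gamma(M)$. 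The exponential upper bound $\underline{F}(x-x_0)\le F^*(1-e^{-\sqrt{\mu_F}(x-x_0)})$ --- the ingredient that converts distance from $x_0$ into the lower bound $\phi(\underline{F})\le\underline{M}$ --- is therefore not available off the shelf, and citing it as such makes the relaxation claim circular, since $\phi$ was itself constructed under the hypotheses of Lemma \ref{lem:2}. The paper closes this by a short but necessary computation: $\phi$ is nondecreasing with $\lim_{F\to F^*}\phi(F)=M^*/2$ (via \eqref{rel_stat1}), so by monotonicity of $\Gamma$ the relaxed reaction term is bounded by $\frac{\rho\nu_E bF^*}{\frac{bF^*}{K}+\mu_E+\nu_E}\,\Gamma(M^*)=\mu_F F^*$, whence $-\underline{F}''\le\mu_F(F^*-\underline{F})$ and the comparison argument of Lemma \ref{lem:2} applies \emph{to the modified equation}; only then does Lemma \ref{lem:1}, applied to \eqref{subsol:M} (legitimate, since $\underline{F}$ is nondecreasing), give \eqref{ineqM1D} and hence $\phi(\underline{F})\le\underline{M}$. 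Your proof needs this verification supplied; with it, the argument is exactly the paper's.

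A secondary, tangential error: your closing claim that ``any initial datum positive on a right half-line will dominate the subsolution for $x_0$ large enough'' is false --- translating the profile does not shrink it, and $\underline{F}(x-x_0)$ still increases to $\underline{F_m}>F_1^*$ on $\{x>x_0\}$, so a datum with small positive values on a half-line is never above it, for any $x_0$. This does not damage the proposition, which (as in the paper) simply \emph{assumes} the initial data lies above the subsolution for some $x_0$; in the actual application, the comparison is with a traveling wave converging to $(E^*,M^*,F^*)$, for which such an $x_0$ exists.
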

\begin{proof}
    From Proposition \ref{prop:exist}, we know that $(\underline{\mathbf{E}},\underline{\mathbf{M}},\underline{\mathbf{F}})$ is well-defined and continuous on $(0,+\infty)\times \RR$ and is nondecreasing with respect to $x$.
    Then, we verify that it is a subsolution for each equation of this system:
    For the equation on $E$ it is obvious.
  For the equation on $M$, it is clear for $x<x_0$. For $x>x_0$, we compute 
  $$
  \partial_t \underline{\mathbf{M}} - \partial_{xx} \underline{\mathbf{M}} = - \underline{M}''(x-x_0) = (1-\rho)\nu_E \underline{E}(x-x_0) - \mu_M \underline{M}(x-x_0),
  $$
  where we use \eqref{subsol:M}. Thus, $\underline{\mathbf{M}}$ is a subsolution for $x>x_0$. Since $\partial_x \underline{\mathbf{M}}(x_0^+) = \underline{M}'(x_0^+)\geq 0 = \partial_x \underline{\mathbf{M}} (x_0^-)$ , it is also a subsolution on $\RR$.
  Moreover, applying Lemma \ref{lem:1}, we have for $x>x_0$
  \begin{equation}\label{ineqM1D}
    \underline{M}(x-x_0) \geq \frac{(1-\rho)\nu_E \underline{E}(x-x_0)}{2\mu_M}\left(1-e^{-2\sqrt{\mu_M} (x-x_0)}\right).
  \end{equation}
  
  Finally, for the equation on $F$, we compute, for $x>x_0$,
  \begin{equation}\label{eq:estimFsub1D}
    \partial_t \underline{\mathbf{F}} - \partial_{xx} \underline{\mathbf{F}}
    = -\underline{F}''(x-x_0) = \frac{\rho \nu_E b \underline{F}(x-x_0)}{\frac{b \underline{F}(x-x_0)}{K} + \mu_E + \nu_E} \Gamma\big(\phi(\underline{F}(x-x_0))\big) - \mu_F \underline{F}(x-x_0),
  \end{equation}
  where we use \eqref{subsol:F}.
  By definition of $\phi$ in \eqref{def:phi}, we have that $\phi$ is nondecreasing on $(0,F^*)$ as the product of two nonnegative nondecreasing functions. And
  $$
  \lim_{F\to F^*} \phi(F) = \frac{1}{2\mu_M} \frac{(1-\rho)\nu_E b F^*}{\frac{bF^*}{K} + \mu_E+\nu_E} = \frac{M^*}{2},
  $$
  where we use \eqref{rel_stat1} for the last equality.
  Then, recalling that $0\leq \underline{F} \leq \underline{F_m} \leq F^*$, we have since $\Gamma$ is nondecreasing
  $$
  \frac{\rho \nu_E b \underline{F}}{\frac{b \underline{F}}{K} + \mu_E + \nu_E} \Gamma(\phi(\underline{F})) \leq   \frac{\rho \nu_E b F^*}{\frac{b F^*}{K} + \mu_E + \nu_E} \Gamma(M^*/2) \leq 
  \frac{\rho \nu_E b F^*}{\frac{b F^*}{K} + \mu_E + \nu_E} \Gamma(M^*) = \mu_F F^*.
  $$
  Injecting this latter inequality in \eqref{eq:estimFsub1D}, we get $-\underline{F}''(x-x_0) \leq \mu_F (F^*(x-x_0) - \underline{F})$ for $x>x_0$; then in the same spirit as in Lemma \ref{lem:2}, we deduce for $x>x_0$,
  $$
  \underline{F}(x-x_0) \leq F^* (1-e^{-\sqrt{\mu_F}(x-x_0)}),
  $$
  which is equivalent to
  $$
  x-x_0 \geq -\frac{1}{\sqrt{\mu_F}} \ln \left(1-\frac{\underline{F}(x-x_0)}{F^*}\right).
  $$
  Injecting this latter inequality into \eqref{ineqM1D}, we obtain for $x>x_0$
  $$
  \underline{M}(x-x_0) \geq \frac{(1-\rho)\nu_E \underline{E}(x-x_0)}{2\mu_M}\left(1-\exp\left(2\frac{\sqrt{\mu_M}}{\sqrt{\mu_F}} \ln \left(1-\frac{\underline{F}(x-x_0)}{F^*}\right)\right)\right) = \phi(\underline{F}(x-x_0)).
  $$
  Injecting this latter inequality into \eqref{eq:estimFsub1D}, we get, for $x>x_0$,
  $$
  \partial_t \underline{\mathbf{F}} - \partial_{xx} \underline{\mathbf{F}} \leq \frac{\rho\nu_E b \underline{F}(x-x_0)}{\frac{b\underline{F}(x-x_0)}{K}+\mu_E+\nu_E}\Gamma(\underline{M}(x-x_0)) - \mu_F \underline{F}(x-x_0).
  $$
  Thus, $\underline{\mathbf{F}}$ is a subsolution in $\{x>x_0\}$ and since it is nondecreasing it verifies the condition at the interface $x=x_0$. 
\end{proof}

\noindent\textit{Proof of Proposition \ref{prop:TWMs0}. }
Since $\gamma>\gamma_c$, Lemma \ref{lem:stat} implies that there are two stable nonnegative steady states.
Existence of traveling waves follows then straightforwardly the work of \cite{FAN,ANG}. 
The fact that $\gamma\mapsto c_\gamma$ is increasing is a consequence of the fact that $\gamma\mapsto \Gamma$ is increasing. Indeed, if $\gamma_1<\gamma_2$ the traveling wave solution for $\gamma_1$ is clearly a subsolution of the system for $\gamma_2$.

The existence of a $\gamma_0$ such that \eqref{eq:condition} holds is a consequence of the fact that the left hand side of \eqref{eq:condition} is increasing with respect to $\gamma$, is negative for $\gamma=0$ and positive when $\gamma\to +\infty$ as $\mathcal{N}>1$ (see Remark \ref{rem:condition} above). 
It is proved in the Appendix that for $\gamma=\gamma_c$, we have for all $F\in (0,F^*)$ (see \eqref{rel_stat4}),
$$
\frac{\rho \nu_E b }{\frac{b F}{K} + \nu_E+\mu_E} \Gamma(\phi_0(F)) \leq \mu_F.
$$
Hence, $\gamma_0>\gamma_c$.

Finally, we are left to study the sign of the traveling wave. 
To do so, we use Proposition \ref{prop:sub1D} : for $\gamma_0$ such that \eqref{eq:condition} holds, there exists a subsolution $(\underline{\mathbf{E}},\underline{\mathbf{M}}, \underline{\mathbf{F}})$ of system \eqref{steady}. In particular, since the traveling wave is a solution, it should be bounded from below by this subsolution which is stationary. Necessarily, we have $c\geq 0$. 
Then, we conclude that for any $\gamma>\gamma_0$, we have $c>0$.
\qed

\subsection{Numerical illustration}\label{sec:num1d}

In order to illustrate the results in Proposition \ref{prop:TWMs0}, we display in this part some numerical results. We discretize system \eqref{syst0} in a one dimensional interval $[-L,L]$ for a time interval $[0,T]$ by a uniform semi-implicit $P_1$ finite element method, where the reaction term is treated explicitely. We take the numerical values given in Table \ref{tab:parametre} for the parameters of the model (these values are taken from \cite{STR19}). In this table, there is a wide range of choice for the parameter $\gamma$. Finally to fix the domain, we take $L=40$ and $T=150$. The initial data are chosen to be $(E^0,M^0,F^0) = (E^*,M^*,F^*) \mathbf{1}_{x<-10}$ such that the initial data is at the positive stable equilibrium on the left of the domain and at the zero stable equilibrium at the right.
\begin{table}[h!]
    \centering
    \begin{tabular}{|c|c|c|c|c|c|c|c|c|}
       \hline 
       $b$ & $\nu_E$ & $\mu_E$ & $\mu_M$ & $\mu_F$ & $\rho$ & $K$ & $D$ & $\gamma$ \\
       \hline  
       10  & 0.08 & 0.05 & 0.14 & 0.1 & 0.5 & 200 & $0.1$ & $10^{-4} - 1$ \\
       \hline
    \end{tabular}
    \caption{Numerical values of the parameters of the model.}
    \label{tab:parametre}
\end{table}
With the numerical values in Table \ref{tab:parametre}, we first consider the value $\gamma=0.5$. Then, we compute $F^* = 77.4$ and we find the numerical values $\gamma_c = 2.351\times 10^{-3}$ and $\gamma_0 = 4,3\times 10^{-2}$.
Hence, we are in the situation where $\gamma_c<\gamma_0<\gamma$ for which the results of Proposition \ref{prop:TWMs0} apply. The numerical results are shown in Figure \ref{fig1}. As expected, we observe a traveling wave with positive speed illustrating the fact that there is an invasion of the species into the domain. 
\begin{figure}[h!]
    \centering
    \includegraphics[width=0.5\linewidth]{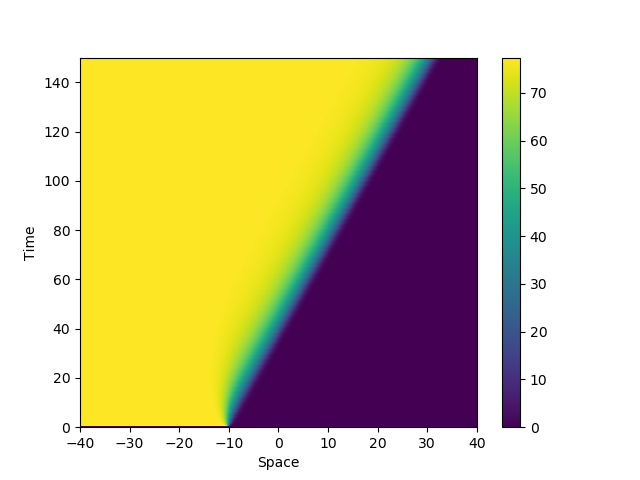}
    \caption{Time and spatial dynamics of the density of female mosquitoes $F$ solution of \eqref{syst0} with the numerical parameters in Table \ref{tab:parametre} in the situation of Proposition \ref{prop:TWMs0} where $\gamma_c<\gamma_0<\gamma$. We observe invasion of the species into the domain.}
    \label{fig1}
\end{figure}
We also show in Figure \ref{fig2} two situations where the conditions of Proposition \ref{prop:TWMs0} are not fulfilled. In Figure \ref{fig2} left, we take $\gamma=0.01$. Then, we find $F^*=30.12$, $\gamma_c = 2.351\times 10^{-3}$ and $\gamma_0 = 1,5\times 10^{-2}$. Hence, we are in the situation $\gamma_c<\gamma<\gamma_0$, however we still observe an invasion of the mosquito population into the domain. It illustrates the fact that condition $\gamma>\gamma_0$ is not optimal. Nevertheless, for $\gamma$ even smaller, we observe that there may be no invasion of the mosquito population (see Fig. \ref{fig2}-right where we took $\gamma = 2.355\times 10^{-3}$). Obviously in this latter situation there is no need to apply the sterile insect technique.
\begin{figure}[h!]
    \centering
    \includegraphics[width=0.45\linewidth]{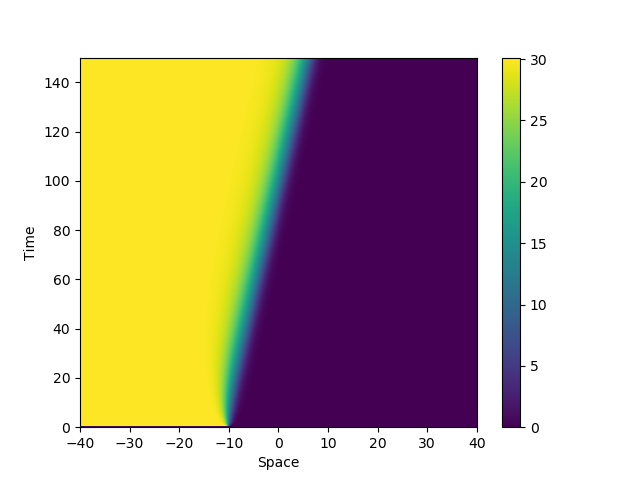}
    \includegraphics[width=0.45\linewidth]{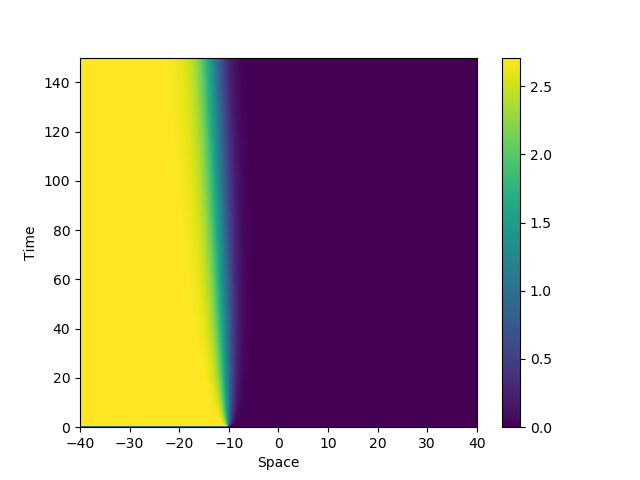}
    \caption{Time and spatial dynamics of the density of female mosquitoes $F$, solution of \eqref{syst0}, in the situation where $\gamma_c<\gamma<\gamma_0$ for $\gamma=0.01$ (left) and $\gamma=2.355\, 10^{-3}$ (right). We observe that we may have invasion (left) or natural extinction (right) of the mosquito population.}
    \label{fig2}
\end{figure}

\section{Construction of a radially symmetric sub-solution with sterile males}\label{sec:sub-solution}

As presented in the introduction, we use similar arguments as in the case without sterile males. However, a main new difficulty arises: the equation is non-autonomous because of the sterile males. The idea is as follows: since the sub-solution becomes nonzero (i.e. $(\underline{E}, \underline{F}, \underline{M}) > (0,0,0)$) for large values of $|x - ct|$, we may assume that, in this region, the density of sterile males is negligible (of order $\varepsilon e^{-|x - ct|}$) and thus the previous techniques can be applied.

\subsection{A stationary problem in a half space}

Let $\varepsilon>0$, following the strategy developed in Section \ref{sec:Ms0}, we investigate stationary solutions of the following problem on $(0,+\infty)$,
\begin{subequations}\label{stat2}
\begin{align}\label{stat2:M}
  & - M'' = g_M(M,F) := \frac{(1-\rho) \nu_E b F}{\frac{bF}{K}+\mu_E+\nu_E} - \mu_M M  \\
  \label{stat2:F}
  & - F'' = g_F^\varepsilon(x,M,F) := \frac{\rho \nu_E b F}{\frac{bF}{K}+\mu_E+\nu_E}\frac{M}{M+m_s^\varepsilon(x)} \Gamma(M) - \mu_F F,
\end{align}
\end{subequations}
where $m_s^\varepsilon(x) = \varepsilon e^{-\sqrt{\mu_s}x}$, complemented with the initial condition $(M(0),F(0)) = (0,0)$.

We will use the function $\phi$ defined in \eqref{def:phi}, and we also introduce
\begin{equation}
    \label{def:phis}
\phi_s^\varepsilon(F):= \varepsilon \exp\left(\sqrt{\dfrac{\mu_s}{\mu_F}}\ln \left(1-\frac{F}{F^*}\right)\right).
\end{equation}
Then, we first consider the following scalar reaction-diffusion equation on $(0,+\infty)$
\begin{equation}\label{eq:F2}
-F'' = \frac{\rho \nu_E b F}{\frac{b F}{K} + \mu_E + \nu_E} \frac{\phi(F)}{\phi(F)+\phi_s^\varepsilon(F)}\Gamma(\phi(F)) - \mu_F F.
\end{equation}
complemented with initial data $\underline{F}(0)=0$
\begin{proposition}\label{prop:sub2}
Under the assumptions of Proposition \ref{prop:exist}, there exists $\varepsilon_0>0$ such that for any $0<\varepsilon<\varepsilon_0$, there exists a solution $\underline{F}$ of \eqref{eq:F2} on $(0,+\infty)$ with $\underline{F}(0)=0$, which is bounded, non-decreasing and there exists $\underline{F_n}>F_1^*$ such that $\lim_{x\to +\infty} \underline{F}(x) = \underline{F_n}$
\end{proposition}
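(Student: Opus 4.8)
The plan is to reproduce the energy (phase-plane) argument of the proof of Proposition~\ref{prop:exist}, now treating the extra factor $\frac{\phi(F)}{\phi(F)+\phi_s^\varepsilon(F)}$ appearing in \eqref{eq:F2} as a perturbation of the reaction term that disappears as $\varepsilon\to 0$. Accordingly, I would first introduce the perturbed potential
\begin{equation*}
G_\varepsilon(F) = \int_0^F \left(\frac{\rho\nu_E b u}{\frac{bu}{K} + \mu_E + \nu_E}\frac{\phi(u)}{\phi(u)+\phi_s^\varepsilon(u)}\Gamma(\phi(u)) - \mu_F u\right)\,du.
\end{equation*}
Since $\phi_s^\varepsilon(0)=\varepsilon>0$ while $\phi(0)=0$, the fraction extends continuously by $0$ at the origin, so $G_\varepsilon$ is continuous on $[0,F^*]$ with $G_\varepsilon(0)=0$; the whole construction then rests on recovering, for $\varepsilon$ small, the sign structure of the unperturbed potential $G$ used in Proposition~\ref{prop:exist}.

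The two properties I would establish are as follows. \emph{Positivity of the maximum.} Because $0\le \frac{\phi(u)}{\phi(u)+\phi_s^\varepsilon(u)}\le 1$, the integrand of $G_\varepsilon$ is bounded on $[0,F^*]$ uniformly in $\varepsilon$, and since $\phi_s^\varepsilon(u)\to 0$ pointwise on $(0,F^*]$ the fraction tends to $1$; the dominated convergence theorem then gives $G_\varepsilon(F)\to G(F)$ for each fixed $F$. In particular $G_\varepsilon(\underline{F_m})\to G(\underline{F_m})>0$, where $\underline{F_m}$ is the maximizer from Proposition~\ref{prop:exist}, so there exists $\varepsilon_0>0$ with $\max_{[0,F^*]}G_\varepsilon>0$ for all $0<\varepsilon<\varepsilon_0$. \emph{Negativity near the origin.} On $(0,F_1^*)$ the unperturbed integrand is strictly negative (shown in Proposition~\ref{prop:exist}); multiplying its positive part by a factor $\le 1$ only decreases it, so the perturbed integrand stays strictly negative there and hence $G_\varepsilon(F)<0$ for $F\in(0,F_1^*]$.

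With these facts I would define $\underline{F_n}$ as the smallest $F\in(0,F^*)$ such that $G_\varepsilon(\tilde F)<G_\varepsilon(F)$ for all $\tilde F\in[0,F)$, which exists because the maximum is positive. Since $G_\varepsilon<0$ on $(0,F_1^*]$ whereas $G_\varepsilon(\underline{F_n})>0$, necessarily $\underline{F_n}>F_1^*$; moreover the reaction term in \eqref{eq:F2} is negative at $F^*$ (there $\phi_s^\varepsilon(F^*)=0$ and $\phi(F^*)=M^*/2$, so it equals $\tfrac{\rho\nu_E bF^*}{\ldots}\Gamma(M^*/2)-\mu_F F^*<0$), which forces $\underline{F_n}$ to be an interior local maximum, i.e.\ $G_\varepsilon'(\underline{F_n})=0$, equivalently a constant steady state of \eqref{eq:F2}. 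I would then take $\underline{F}$ to be the solution of
\begin{equation*}
\underline{F}' = \sqrt{2\big(G_\varepsilon(\underline{F_n}) - G_\varepsilon(\underline{F})\big)}, \qquad \underline{F}(0)=0,
\end{equation*}
exactly as in Proposition~\ref{prop:exist}: it is nondecreasing and bounded above by $\underline{F_n}$, and differentiating the conserved relation $\tfrac12(\underline{F}')^2+G_\varepsilon(\underline{F})=G_\varepsilon(\underline{F_n})$ shows it solves \eqref{eq:F2}; being monotone and bounded, it converges as $x\to+\infty$ to a steady state, which can only be $\underline{F_n}$.

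The main obstacle is not the construction, which is a verbatim transcription of the phase-plane argument of Proposition~\ref{prop:exist}, but controlling the perturbation well enough to keep the sign structure of $G$. The delicate point is that the extra factor is singular-looking at $F=0$, where $\phi$ vanishes but $\phi_s^\varepsilon$ does not, so it cannot be regarded as a uniformly small multiplicative perturbation; the argument instead relies on the two \emph{qualitative} inequalities above (the factor is $\le 1$, which preserves negativity near $0$, and it tends to $1$ pointwise, which keeps the maximum positive). A secondary technical point, inherited from Proposition~\ref{prop:exist}, is the non-Lipschitz character of the square root at $\underline{F}=\underline{F_n}$, handled in the standard way by noting that $\underline{F_n}$ is an equilibrium and is therefore reached only in the limit $x\to+\infty$.
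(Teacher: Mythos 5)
Your proposal is correct and follows essentially the same route as the paper: dominated convergence to show the perturbed potential $G_\varepsilon$ retains a positive maximum for $\varepsilon$ small, then the phase-plane construction of Proposition~\ref{prop:exist} verbatim ($\underline{F}' = \sqrt{2(G_\varepsilon(\underline{F_n})-G_\varepsilon(\underline{F}))}$, monotone bounded, limit identified with $\underline{F_n}$), with $\underline{F_n}>F_1^*$ following from negativity of $G_\varepsilon$ on $(0,F_1^*]$ since the extra factor $\frac{\phi}{\phi+\phi_s^\varepsilon}\leq 1$ preserves the sign structure. Your added details (the sign of the reaction term at $F^*$ forcing the maximizer to be interior, hence a steady state) merely make explicit what the paper delegates to ``as in the proof of Proposition~\ref{prop:exist},'' so there is no substantive divergence.
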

\begin{proof}
We first observe that, as a consequence of the dominated convergence theorem, we have
\begin{align*}
    \lim_{\varepsilon\to 0}
    \int_0^{F^*} \left(\frac{\rho\nu_E b u}{\frac{b u}{K} + \mu_E + \nu_E}\frac{\phi(u)}{\phi(u)+\phi_s^\varepsilon(u)} \Gamma(\phi(u)) - \mu_F u \right)\,du  \\
    = \int_0^{F^*} \left(\frac{\rho\nu_E b u}{\frac{b u}{K} + \mu_E + \nu_E} \Gamma(\phi(u)) - \mu_F u \right)\,du.
\end{align*}
Hence, from \eqref{condition}, we deduce that there exists $\varepsilon_0$ small enough such that, for $0<\varepsilon<\varepsilon_0$, we have
    $$
    G_\varepsilon(F):= \int_0^{F^*} \left(\frac{\rho\nu_E b u}{\frac{b u}{K} + \mu_E + \nu_E}\frac{\phi(u)}{\phi(u)+\phi_s^\varepsilon(u)} \Gamma(\phi(u)) - \mu_F u \right)\,du>0.
    $$
    As in the proof of Proposition \ref{prop:exist}, we may construct the solution by taking $\underline{F_n}\in (0,F^*]$ such that $G_\varepsilon(\underline{F_n}):=\max_{[0,F^*]} G_\varepsilon$ and solving the Cauchy problem
    $$
    \underline{F}' = \sqrt{2\left(G_\varepsilon(\underline{F_n})-G_\varepsilon(\underline{F}))\right) }, \qquad \underline{F}(0) = 0.
    $$
    Clearly this solution is nondecreasing.
    And by the same token as in the proof of Proposition \ref{prop:exist}, we verify that $\lim_{x\to +\infty} \underline{F}(x)=\underline{F_n}>F_1^*$.
\end{proof}

\begin{proposition}\label{prop:sol}
    Let $\gamma>\gamma_0$ where $\gamma_0$ is defined in \eqref{eq:condition}. 
    Then, there exists $\varepsilon_\gamma>0$ and a bounded solution $(\mathbf{M}_\gamma,\mathbf{F}_\gamma)$ of \eqref{stat2} with $\varepsilon=\varepsilon_\gamma$ in the definition of $g_F^\varepsilon$, and with $(\mathbf{M}_\gamma(0),\mathbf{F}_\gamma(0))=(0,0)$ and $(\mathbf{M}_\gamma(+\infty),\mathbf{F}_\gamma(+\infty))=(M^*,F^*)$.
    Moreover, $(\mathbf{M}_\gamma,\mathbf{F}_\gamma)$ is nondecreasing.
\end{proposition}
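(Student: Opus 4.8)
The plan is to obtain $(\mathbf{M}_\gamma,\mathbf{F}_\gamma)$ as a genuine solution of the coupled system \eqref{stat2} by trapping it between an ordered pair of sub- and super-solutions and then exploiting the cooperative structure of the system. The two structural facts I would record at the outset are: first, the reaction terms of \eqref{stat2} are quasi-monotone nondecreasing, since $F\mapsto \frac{(1-\rho)\nu_E bF}{\frac{bF}{K}+\mu_E+\nu_E}$ is nondecreasing (so $\partial_F g_M\geq 0$) and $M\mapsto \frac{M}{M+m_s^\varepsilon(x)}\Gamma(M)$ is a product of nonnegative nondecreasing functions (so $\partial_M g_F^\varepsilon\geq 0$); second, $x\mapsto g_F^\varepsilon(x,M,F)$ is nondecreasing, because $m_s^\varepsilon(x)=\varepsilon e^{-\sqrt{\mu_s}x}$ is decreasing in $x$. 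This second monotonicity will be the engine of the sliding argument used for monotonicity.

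For the sub-solution I would fix $\varepsilon=\varepsilon_\gamma:=\varepsilon_0$ from Proposition \ref{prop:sub2} (legitimate since $\gamma>\gamma_0$ implies \eqref{condition}), let $\underline{F}$ be the corresponding nondecreasing bounded solution of \eqref{eq:F2}, and define $\underline{M}$ by solving \eqref{stat2:M} with right-hand side $\psi(x)=\frac{(1-\rho)\nu_E b\underline{F}(x)}{\frac{b\underline{F}(x)}{K}+\mu_E+\nu_E}$ as in Lemma \ref{lem:1}; this makes the $M$-equation hold with equality. Exactly as in the proof of Proposition \ref{prop:sub1D}, the lower bound of Lemma \ref{lem:1} gives $\underline{M}\geq \phi(\underline{F})$, and the upper bound $\underline{F}\leq F^*(1-e^{-\sqrt{\mu_F}x})$ (obtained as in Lemma \ref{lem:2}) gives $m_s^\varepsilon(x)\leq \phi_s^\varepsilon(\underline{F}(x))$ by \eqref{def:phis}. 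Using that $t\mapsto \frac{t}{t+a}$ and $\Gamma$ are nondecreasing, these two inequalities yield $\frac{\underline{M}}{\underline{M}+m_s^\varepsilon(x)}\Gamma(\underline{M})\geq \frac{\phi(\underline{F})}{\phi(\underline{F})+\phi_s^\varepsilon(\underline{F})}\Gamma(\phi(\underline{F}))$, which is precisely what turns the identity \eqref{eq:F2} into the sub-solution inequality $-\underline{F}''\leq g_F^\varepsilon(x,\underline{M},\underline{F})$. Hence $(\underline{M},\underline{F})$ is a sub-solution with $(\underline{M},\underline{F})(0)=(0,0)$.

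For the super-solution I would take the constant $(M^*,F^*)$. The $M$-equation holds with equality since $g_M(M^*,F^*)=0$, and because $\frac{M^*}{M^*+m_s^\varepsilon(x)}\leq 1$ one gets $g_F^\varepsilon(x,M^*,F^*)\leq \frac{\rho\nu_E bF^*}{\frac{bF^*}{K}+\mu_E+\nu_E}\Gamma(M^*)-\mu_F F^*=0$, using the equilibrium relation for the non-sterile system \eqref{systODE}; thus $(M^*,F^*)$ is a super-solution. The ordering $(\underline{M},\underline{F})\preceq(M^*,F^*)$ follows from $\underline{F}\leq \underline{F_n}\leq F^*$ together with the corresponding bound on $\underline{M}$. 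With this ordered pair in hand, I would construct the solution by the monotone iteration scheme for cooperative systems on truncated intervals $(0,R)$ with Dirichlet data $(0,0)$ at $x=0$ and $(M^*,F^*)$ at $x=R$, producing solutions $(\mathbf{M}_R,\mathbf{F}_R)$ with $(\underline{M},\underline{F})\preceq(\mathbf{M}_R,\mathbf{F}_R)\preceq(M^*,F^*)$; uniform elliptic estimates, available thanks to these bounds, then allow passage to the limit $R\to+\infty$ along a subsequence, giving a bounded solution $(\mathbf{M}_\gamma,\mathbf{F}_\gamma)$ of \eqref{stat2} on $(0,+\infty)$ with $(\mathbf{M}_\gamma,\mathbf{F}_\gamma)(0)=(0,0)$ and the same bounds.

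The remaining, and hardest, part is to establish monotonicity and to identify the limit at $+\infty$. For monotonicity I would use a sliding argument: for $\tau>0$, the $x$-monotonicity of $g_F^\varepsilon$ noted above shows that the shift $(\mathbf{M}_\gamma,\mathbf{F}_\gamma)(\cdot+\tau)$ is a super-solution of \eqref{stat2}, while it dominates $(\mathbf{M}_\gamma,\mathbf{F}_\gamma)$ at $x=0$; the comparison principle for the cooperative system then gives $(\mathbf{M}_\gamma,\mathbf{F}_\gamma)(x)\preceq(\mathbf{M}_\gamma,\mathbf{F}_\gamma)(x+\tau)$, i.e. $\mathbf{M}_\gamma,\mathbf{F}_\gamma$ are nondecreasing. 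I expect the main obstacle to be making this comparison rigorous on the unbounded half-line, where one must control the behaviour at $+\infty$; here the lower bound $\mathbf{F}_\gamma\geq\underline{F}$ with $\underline{F_n}>F_1^*$ and the uniform upper bound by $(M^*,F^*)$ are what pin down the far-field ordering. Once monotonicity is established, $(\mathbf{M}_\gamma,\mathbf{F}_\gamma)$ converges as $x\to+\infty$; since $m_s^\varepsilon(x)\to 0$, the limit must be an equilibrium of the non-sterile stationary system \eqref{stat}, and as the limit of $\mathbf{F}_\gamma$ is $\geq\underline{F_n}>F_1^*$, the only admissible equilibrium is $(M^*,F^*)$, which completes the proof.
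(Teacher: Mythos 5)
Your construction of the ordered pair and your identification of the limit at $+\infty$ follow the paper: the sub-solution $(\underline{M},\underline{F})$ built from Proposition \ref{prop:sub2} and Lemma \ref{lem:1} is the paper's, your constant super-solution $(M^*,F^*)$ is a harmless variant of the paper's $\bigl(M^*(1-e^{-\sqrt{\mu_M}x}),\,F^*(1-e^{-\sqrt{\mu_F}x})\bigr)$, and your final step coincides with the paper's Step 3. The genuine gap is in the monotonicity step. The assertion that ``the comparison principle for the cooperative system then gives $(\mathbf{M}_\gamma,\mathbf{F}_\gamma)(x)\preceq(\mathbf{M}_\gamma,\mathbf{F}_\gamma)(x+\tau)$'' is not a valid deduction: for \emph{elliptic} problems, a sub-solution need not lie below a super-solution even when they are ordered on the boundary and the system is cooperative. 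A scalar (hence trivially cooperative) counterexample is $-u''=u$ on $(0,\pi)$: $u(x)=\sin x$ is a solution (so a sub-solution), $v\equiv 0$ is a solution (so a super-solution), they coincide on the boundary, yet $u>v$ inside. Nor can you invoke the parabolic comparison principle (Lemma \ref{lem:compar}), since that would require the \emph{initial} ordering $(\mathbf{M}_\gamma,\mathbf{F}_\gamma)(\cdot)\preceq(\mathbf{M}_\gamma,\mathbf{F}_\gamma)(\cdot+\tau)$ --- exactly what you are trying to prove.

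A sliding argument can in principle be run in the Berestycki--Nirenberg fashion: establish the ordering first for $\tau$ very large, using the behaviour of the solution at $+\infty$ and the nondegenerate stability of the limit state, then continuously decrease $\tau$. But in your proof the limit at $+\infty$ is identified only \emph{after} monotonicity, so this scheme is circular; and your proposed substitute --- the bounds $\underline{F}\leq \mathbf{F}_\gamma\leq F^*$ with $\underline{F_n}>F_1^*$ --- only confines the tail of $\mathbf{F}_\gamma$ to the band $(F_1^*,F^*]$, which by \eqref{rel_stat3} is precisely a region where the reaction is a source and the eigenvalue-type failure above can occur; a priori the solution could oscillate in this band. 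The paper avoids sliding entirely: if monotonicity first fails at $x_0$ (resp.\ $x_1$ for $\mathbf{F}_\gamma$), it replaces the solution beyond that point by the plateau $\max(\mathbf{M}_\gamma(x),\mathbf{M}_\gamma(x_0))$ and checks --- using the cooperativity in $M$ and exactly the monotonicity of $x\mapsto g_F^\varepsilon(x,M,F)$ that you correctly identified --- that the truncated pair is again a sub-solution lying strictly above the solution past $x_0$, contradicting the maximality of the solution obtained by monotone iteration from the super-solution. If you wish to keep your architecture, replace the sliding step by this truncation argument (taking care to retain the maximal solution in your limit over truncated intervals $(0,R)$); otherwise you would have to prove the convergence at $+\infty$ independently before sliding.
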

\begin{proof} We split the proof into several steps~:

\textbf{Step 1:} Construction of a super-solution and a sub-solution.

    On the one hand, let us denote, for $x\in(0,+\infty)$
    $$
    \overline{M}(x) := M^*(1-e^{-\sqrt{\mu_M}x}), \qquad \overline{F}(x) := F^*(1-e^{-\sqrt{\mu_F}x})).
    $$
    We have 
\[    -\overline{M}'' = \mu_M M^* - \mu_M \overline{M} = \frac{(1-\rho) \nu_E b F^*}{\frac{b F^*}{K}+\mu_E+\nu_E} - \mu_M \overline{M}  \geq g_M(\overline{M},\overline{F}),
\]
since $\overline{F}\leq F^*$ and $g_M$ is increasing with respect to its second variable.
    For the second equation,
\[    -\overline{F}'' = \mu_F F^* - \mu_F \overline{F} 
    = \frac{\rho \nu_E b F^*}{\frac{b F^*}{K}+\mu_E+\nu_E} \Gamma(M^*) - \mu_F \overline{F}  \geq g_F^\varepsilon(x,\overline{M},\overline{F}),\]    since $\overline{M}\leq M^*$ and $\overline{F}\leq F^*$ and the first term of the right hand side is increasing with respect to $M$ and to $F$.
    Thus, $(\overline{M},\overline{F})$ is a super-solution for \eqref{stat2}.

    On the other hand, from Proposition \ref{prop:sub2}, there exists $\varepsilon_\gamma$ small enough, such that there exists a solution $\underline{F}$ of \eqref{eq:F2} with $\varepsilon=\varepsilon_\gamma$ in the definition of $\phi_s^\varepsilon$ (see \eqref{def:phis}). Then, with this function $\underline{F}$, we define $\underline{M}$ solution of
    $$
    -\underline{M}'' = g_M(\underline{M},\underline{F}), \qquad \underline{M}(0) = 0, \quad \underline{M}(+\infty) = \underline{M_n} := \dfrac{(1-\rho) \nu_E b \underline{F_n}}{\mu_M\left(\frac{b \underline{F_n}}{K}+\mu_E+\nu_E\right)}.
    $$
    From Lemma \ref{lem:1}, such a solution $\underline{M}$ exists and verifies
    \begin{equation}\label{estim:Mbar}
    \underline{M}(x) \geq \frac{1}{2\mu_M}\dfrac{(1-\rho) \nu_E b \underline{F}}{\frac{b \underline{F}}{K}+\mu_E+\nu_E}\left(1-e^{-2\sqrt{\mu_M}x}\right).
    \end{equation}
    Then, we claim that $(\underline{M},\underline{F})$ is a sub-solution for \eqref{stat2}.

    In order to prove this claim, it suffices to show that $\underline{F}$ is a sub-solution for \eqref{stat2:F}. Indeed, the first term of the right hand side of \eqref{stat2:M} is increasing with respect to $F$.
    With the definitions of $\phi$ in \eqref{def:phi} and $\phi_0$ in \eqref{def:phi0}, it is clear that $\phi(F)<\phi_0(F)$. Recalling moreover (see \eqref{rel_stat2} in Appendix) that
    $$
    \frac{\rho \nu_E b F^*}{\frac{b F^*}{K}+\mu_E+\nu_E} \Gamma(\phi_0(F^*)) = \mu_F F^*,
    $$
    we deduce from \eqref{eq:F2} that $-\underline{F}'' \leq \mu_F (F^*-\underline{F})$. Lemma \ref{lem:1} implies that $\underline{F}\leq F^*(1-e^{-\sqrt{\mu_F}x})$ for any $x\in (0,+\infty)$, or equivalently
    $$
    x \geq - \frac{1}{\sqrt{\mu_F}} \ln \left(1-\frac{\underline{F}}{F^*}\right).
    $$
    Then, by definition of $\phi$ in \eqref{def:phi} and $\phi_s^\varepsilon$ in \eqref{def:phis}, we deduce, using also \eqref{estim:Mbar} that
    $$
    \phi(\underline{F}) \leq \underline{M}
    \qquad \text{ and } \qquad
    \varepsilon_\gamma e^{-\sqrt{\mu_s} x} \leq  \phi_s^{\varepsilon_\gamma}(\underline{F}) .
    $$
    Inserting into \eqref{eq:F2}, we obtain 
    $$
    -\underline{F}'' \leq g_F^{\varepsilon_\gamma}(x,\underline{M},\underline{F}).
    $$

    Then, we have constructed a super- and a sub-solution for system \eqref{stat2}. It is then classical to construct a solution, denoted $(\mathbf{M}_\gamma,\mathbf{F}_\gamma)$ such that $(\underline{M},\underline{F}) \leq (\mathbf{M}_\gamma,\mathbf{F}_\gamma) \leq (\overline{M},\overline{F})$.

\medskip    

\textbf{Step 2:} $(\mathbf{M}_\gamma,\mathbf{F}_\gamma)$ is non-decreasing.

    Assume by contradiction that there exists a point $x_0>0$ and $\delta_0>0$ such that $\mathbf{M}_\gamma'(x_0) = 0$ and $\mathbf{M}_\gamma'(x)<0$ for $x\in(x_0,x_0+\delta_0)$ (the proof is similar if the monotony is first broken by the function $\mathbf{F}_\gamma$). Then, either there exists $x_1\geq x_0$ and $\delta_1>0$ such that $\mathbf{F}_\gamma'(x_1)=0$ and $\mathbf{F}'_\gamma(x)<0$ on $(x_1,x_1+\delta_1)$, or $\mathbf{F}_\gamma$ is non-decreasing. 
    We define
$$
\widetilde{\mathbf{M}}(x) = \left\lbrace  \begin{aligned} & \mathbf{M}_\gamma(x) && \text{ for } x < x_0,\\ &\max(\mathbf{M}_\gamma(x), \mathbf{M}_\gamma(x_0)) && \text{ for } x \geq x_0, \end{aligned} \right. 
$$
and if $x_1<+\infty$,
$$
\widetilde{\mathbf{F}}(x) = \left\lbrace  \begin{aligned} & \mathbf{F}_\gamma(x) && \text{ for } x < x_1,\\ &\max(\mathbf{F}_\gamma(x), \mathbf{F}_\gamma(x_1)) && \text{ for } x \geq x_1, \end{aligned} \right.
$$
else, $\widetilde{\mathbf{F}} = \mathbf{F}_\gamma$.
Clearly, we have by definition $(\mathbf{M}_\gamma,\mathbf{F}_\gamma)\leq (\widetilde{\mathbf{M}},\widetilde{\mathbf{F}})$ and $(\widetilde{\mathbf{M}},\widetilde{\mathbf{F}})$ is non-decreasing.
Next, we claim that $(\widetilde{\mathbf{M}},\widetilde{\mathbf{F}})$ is a sub-solution which will be a contradiction. 
It is clear that the claim is true for $x<x_0$ since it is a solution. Therefore, we focus on the set $\{x\geq x_0\}$: 
\begin{itemize}
    \item \textit{Equation on $M$. } We distinguish two cases~: 
    
    If $\widetilde{\mathbf{M}}(x) = \mathbf{M}_\gamma(x)$ with $\widetilde{\mathbf{M}}''(x) \neq 0$, we have
    $$ 
    -\widetilde{\mathbf{M}}''(x) - g_M(\widetilde{\mathbf{M}}, \widetilde{\mathbf{F}})(x) = (1-\rho) \nu_E b  \left( \frac{\mathbf{F}_\gamma(x)}{\frac{b}{K} \mathbf{F}_\gamma(x) + \mu_E + \nu_E} - \frac{\widetilde{\mathbf{F}}(x)}{\frac{b}{K}\widetilde{\mathbf{F}}(x) + \mu_E + \nu_E} \right) \leq 0,
    $$
    since $\mathbf{F}_\gamma \leq  \widetilde{\mathbf{F}}$ by definition.

    If $\widetilde{\mathbf{M}}(x) = \mathbf{M}_\gamma(x_0)$ with $\widetilde{\mathbf{M}}''(x) = 0$. Using that $\mathbf{F}_\gamma(x_0) \leq \widetilde{\mathbf{F}}(x)$ since $x>x_0$ and by definition of $\widetilde{\mathbf{F}}$, we have 
    $$ 
    -\widetilde{\mathbf{M}}''(x) - g_M(\widetilde{\mathbf{M}}, \widetilde{\mathbf{F}})(x) =  - g_M(\mathbf{M}_\gamma(x_0),\widetilde{\mathbf{F}}(x)) \leq   - g_M(\mathbf{M}_\gamma(x_0),\mathbf{F}_\gamma(x_0)) = \mathbf{M}_\gamma''(x_0) \leq 0, 
    $$
    since, by definition, $x_0$ is a local maximum for $\mathbf{M}_\gamma$.
    \item \textit{Equation on $F$.} Similarly, we have~:

    If $\widetilde{\mathbf{F}}(x) = \mathbf{F}_\gamma(x)$, we have
    $$
    -\widetilde{\mathbf{F}}''(x) - g_F^\varepsilon(x,\widetilde{\mathbf{M}}(x),\widetilde{\mathbf{F}}(x))  = g_F^\varepsilon(x,\mathbf{M}_\gamma(x),\mathbf{F}_\gamma(x)) - g_F^\varepsilon(x,\widetilde{\mathbf{M}}(x),\widetilde{\mathbf{F}}(x)) \leq 0,
    $$
    since $\mathbf{M}_\gamma \leq  \widetilde{\mathbf{M}}$ by definition and $g_F^\varepsilon$ is non-decreasing with respect to $M$.

    If $\widetilde{\mathbf{F}}(x) = \mathbf{F}_\gamma(x_1)$ for $x>x_1$ with $\widetilde{\mathbf{F}}''(x) = 0$, we compute
    $$
     -\widetilde{\mathbf{F}}''(x) - g_F^\varepsilon(x,\widetilde{\mathbf{M}}(x),\widetilde{\mathbf{F}}(x))  = - g_F^\varepsilon(x,\widetilde{\mathbf{M}}(x),\mathbf{F}_\gamma(x_1)) \leq - g_F^\varepsilon (x_1,\mathbf{M}_\gamma(x_1),\mathbf{F}_\gamma(x_1))  = \mathbf{F}''_\gamma(x_1),
    $$
    since $x>x_1$ and $\widetilde{\mathbf{M}}(x)\geq \widetilde{\mathbf{M}}(x_1) \geq \mathbf{M}_\gamma(x_1)$. Moreover, since $x_1$ is a local maximum for $\mathbf{F}_\gamma$, we have 
    $\mathbf{F}''_\gamma(x_1)\leq 0$. Hence,
    $$
     -\widetilde{\mathbf{F}}''(x) - g_F^\varepsilon(x,\widetilde{\mathbf{M}}(x),\widetilde{\mathbf{F}}(x)) \leq 0.
     $$
\end{itemize}
We conclude the proof of this step by stating that we have constructed a new sub-solution which is greater than the solution. This is in contradiction with the definition of a solution.

\medskip    
\textbf{Step 3.} The limit $\lim_{x\to +\infty} (\mathbf{M}_\gamma,\mathbf{F}_\gamma)(x) = (M^*,F^*)$.    

    In the previous step, we have proved that $\mathbf{M}_\gamma$ and $\mathbf{F}_\gamma$ are non-decreasing and are bounded by $M^*$ and $F^*$ respectively. Hence, they converge to some limit which is a non-trivial steady state of system \eqref{stat2}. Moreover, from Proposition \ref{prop:sub2}, the limit for the function $\mathbf{F}_\gamma$ should be greater than $\underline{F_n}>F_1^*$, and similarly the limit for the function $\mathbf{M}_\gamma$ should be greater than $\underline{M_n}>M_1^*$. Since $M_s \to 0$, the only steady state greater than $(M_1^*,F_1^*)$ is $(M^*,F^*)$.
\end{proof}

\subsection{Construction of a subsolution}

We start by an estimate on the sterile male density.
\begin{lemma}\label{lem:ms}
    Let $c>0$ and $\Lambda$ be as in the statement of Theorem \ref{MainTheorem}.
    Let us assume that $M_s^0\in L^\infty(\RR^2)$ is compactly supported in a ball of radius $R_s^0$. 
    Then, there exists $R_s$ large enough such that the solution of \eqref{syst:Ms} on $\RR^2$ with initial data $M_s^0$ verifies 
    $$
    M_s(t,x) \leq \overline{M_s}(t,x) := \max\left(\|M_s^0\|_\infty,\dfrac{\bar{\Lambda}}{\mu_s}\right) \left(\mathbf{1}_{|x|\leq R_s+ct} + e^{-\sqrt{\mu_s}(|x|-R_s-ct)} \mathbf{1}_{|x|>R_s+ct} \right).
    $$
\end{lemma}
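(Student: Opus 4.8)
The plan is to observe that equation \eqref{syst:Ms} is a \emph{scalar, linear} parabolic equation completely decoupled from the rest of the system, so it suffices to prove that $\overline{M_s}$ is a super-solution, i.e. that $\partial_t \overline{M_s} - \Delta \overline{M_s} \geq \Lambda - \mu_s \overline{M_s}$ (in the distributional sense) together with $\overline{M_s}(0,\cdot)\succeq M_s^0$; the conclusion $M_s\leq \overline{M_s}$ then follows from the standard comparison principle for scalar parabolic equations. Throughout I write $A := \max(\|M_s^0\|_\infty,\bar\Lambda/\mu_s)$ and $\sigma := \sqrt{\mu_s}$, and I choose $R_s$ large enough that $R_s \geq R_s^0$ and $R_s > R_2$. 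The first condition handles the initial datum: for $|x|\leq R_s^0\leq R_s$ one has $\overline{M_s}(0,x)=A\geq \|M_s^0\|_\infty\geq M_s^0(x)$, and for $|x|>R_s^0$ one has $M_s^0(x)=0\leq \overline{M_s}(0,x)$. The second condition guarantees that the exterior region $\{|x|>R_s+ct\}$ lies strictly outside the support $\{R_1+ct\leq |x|\leq R_2+ct\}$ of $\Lambda$ (and also outside the decaying tail present in the monostable case, which is supported in $\{|x|\leq R_1+ct\}$), so that $\Lambda\equiv 0$ there.

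Next I would check the super-solution inequality on the two regions where $\overline{M_s}$ is smooth. On the interior $\{|x|<R_s+ct\}$ the function is the constant $A$, so the left-hand side vanishes and the inequality reduces to $\Lambda\leq \mu_s A$; this holds because $\Lambda\leq \bar\Lambda\leq \mu_s A$ everywhere, in both the bistable and the monostable case, since the extra exponential term is bounded by $\bar\Lambda$. On the exterior $\{|x|>R_s+ct\}$, writing $r=|x|$ and $s=r-R_s-ct$ and using the radial Laplacian $\Delta u=u_{rr}+\tfrac1r u_r$, a direct computation with $\overline{M_s}=Ae^{-\sigma s}$ gives
\begin{equation*}
\partial_t \overline{M_s}-\Delta \overline{M_s}+\mu_s \overline{M_s}
= A\sigma e^{-\sigma s}\Big(c-\sigma+\tfrac1r\Big)+A\sigma^2 e^{-\sigma s}
= A\sigma e^{-\sigma s}\Big(c+\tfrac1r\Big)\geq 0,
\end{equation*}
where we used $\sigma^2=\mu_s$. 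Since $\Lambda=0$ on this region, this is exactly the required inequality; note that the two-dimensional curvature term $\tfrac1r$ has a favorable sign and that in fact $c>0$ alone would already suffice.

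The step I expect to be the main (if mild) obstacle is the matching across the sphere $\{|x|=R_s+ct\}$, where $\overline{M_s}$ has a corner and the computation above is only classical on either side. The clean way to deal with it is to note that $\overline{M_s}=\min\big(A,\,A e^{-\sigma(|x|-R_s-ct)}\big)$, so that $\overline{M_s}$ is a minimum of two functions, each of which is a super-solution on the region where it realizes the minimum (the constant on the interior and the exponential on the exterior, as just verified). At the interface the radial derivative drops from $0$ (inside) to $-A\sigma$ (outside), i.e. the kink is concave; hence the distributional Laplacian $\Delta \overline{M_s}$ carries a nonpositive singular measure on the sphere and $-\Delta \overline{M_s}$ a nonnegative one, which only reinforces the super-solution inequality. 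Thus $\overline{M_s}$ is a global distributional super-solution dominating $M_s^0$, and the comparison principle yields $M_s\leq \overline{M_s}$, which is the claimed estimate.
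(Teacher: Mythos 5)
Your proof is correct and follows essentially the same route as the paper's: the same choice $R_s>\max(R_2,R_s^0)$, the bound $\Lambda\leq\bar\Lambda\mathbf{1}_{\{|x|\leq R_2+ct\}}$, the identical radial computation on each side of the sphere $\{|x|=R_s+ct\}$, and the concave-kink (one-sided derivative) argument at the interface before invoking the comparison principle. Your observation that $\overline{M_s}$ is the minimum of two super-solutions is just a cleaner packaging of the paper's check that $\partial_r\overline{M_s}$ drops across the interface, so nothing is missing.
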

\begin{proof}
    From the definition of $\Lambda$ in \eqref{Lambda}, we have
    $$
    \Lambda(x,t) \leq \bar{\Lambda} \mathbf{1}_{|x|\leq R_2+ct}.
    $$
    For $R_s>\max(R_2,R_s^0)$, we verify that $\overline{M_s}$ is a super-solution for the equation on $M_s$.
    We first have
    $$
    \partial_t \overline{M_s} - \Delta \overline{M_s} - \Lambda + \mu_s \overline{M_s}
    \geq -\left(c+\dfrac{1}{r}\right) \partial_r \overline{M_s} - \partial_{rr} \overline{M_s} - \bar{\Lambda} \mathbf{1}_{|x|\leq R_2+ct} + \mu_s \overline{M_s}.
    $$
    For $|x|<R_s+ct$, $\overline{M_s}$ is a constant such that $\mu_s \overline{M_s} \geq \bar{\Lambda}$. Hence, we deduce from above inequality that
    $$
    \partial_t \overline{M_s} - \Delta \overline{M_s} - \Lambda + \mu_s \overline{M_s}
    \geq 0.
    $$
    For $|x|>R_s+ct$, we compute
    $$
    -\left(c+\dfrac{1}{r}\right) \partial_r \overline{M_s} - \partial_{rr} \overline{M_s} - \bar{\Lambda} \mathbf{1}_{|x|\leq R_2+ct} + \mu_s \overline{M_s}
    = \sqrt{\mu_s}\left(c+\dfrac{1}{r}\right) \overline{M_s} - \mu_s \overline{M_s} + \mu_s \overline{M_s} \geq 0.
    $$
    At the interface $|x|=R_s+ct$, we have easily
    $$
    \lim_{|x|\to (R_s+ct)^-} \partial_x \overline{M_s} = 0 \geq \lim_{|x|\to (R_s+ct)^+}\partial_x \overline{M_s}.
    $$
    Finally, by definition we also have $\overline{M_s}(t=0,x) \geq M_s^0$. Hence, $\overline{M_s}$ is a super-solution and this concludes the proof.
\end{proof}

Using the function $(\mathbf{M}_\gamma,\mathbf{F}_\gamma)$ of Proposition \ref{prop:sol}, we may construct a sub-solution for system \eqref{syst}.
\begin{proposition}\label{prop:subsolsyst}
    Let $c>0$, $\gamma>\gamma_0$ and $\Lambda$ be as in the statement of Theorem \ref{MainTheorem}. 
    Let $(\mathbf{M}_\gamma,\mathbf{F}_\gamma)$ be given by Proposition \ref{prop:sol}.
    There exists $\underline{R}$ large enough such that $(\underline{\mathbf{E}},\underline{\mathbf{M}},\underline{\mathbf{F}},\underline{\mathbf{M_s}})$ defined by
    \begin{align*}
    & \underline{\mathbf{M}}(t,x) = \mathbf{M}_\gamma(|x|-ct-\underline{R}) \mathbf{1}_{|x|>ct+\underline{R}}, \qquad
    \underline{\mathbf{F}}(t,x) = \mathbf{F}_\gamma(|x|-ct-\underline{R}) \mathbf{1}_{|x|>ct+\underline{R}},  \\
    & \underline{\mathbf{E}} = \frac{b \underline{\mathbf{F}}}{\frac{b}{K} \underline{\mathbf{F}} + \mu_E+\nu_E}, \qquad 
    \underline{\mathbf{M_s}}(t,x) = \overline{M_s}(t,x),
    \end{align*}
    with $\overline{M_s}$ defined in the statement of Lemma \ref{lem:ms}, is a sub-solution for the system \eqref{syst} with initial conditions verifying \eqref{hyp:init2}. 
\end{proposition}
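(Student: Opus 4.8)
The plan is to verify the four defining inequalities of a sub-solution in the order $\preceq$ componentwise, working in the radial variable $r=|x|$ and the moving coordinate $\xi = r-ct-\underline{R}$, and to isolate the only genuinely delicate point, namely the comparison of the two sterile-male profiles, which is what forces $\underline{R}$ to be large. First I would dispatch the two cheap components. For $M_s$, the function $\underline{\mathbf{M_s}}=\overline{M_s}$ was built in Lemma~\ref{lem:ms} precisely as a super-solution of \eqref{syst:Ms}; since $\preceq$ reverses the fourth coordinate, the super-solution inequality $\partial_t\overline{M_s}-\Delta\overline{M_s}\ge \Lambda-\mu_s\overline{M_s}$ is exactly the sub-solution requirement on the $M_s$-component, and $\overline{M_s}(0)\ge M_s^0$ gives its initial inequality. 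For $E$, the algebraic relation defining $\underline{\mathbf{E}}$ is chosen so that $f_E(\underline{\mathbf{E}},\underline{\mathbf{F}})=0$; since the $E$-equation carries no diffusion and $\underline{\mathbf{F}}$ is non-increasing in $t$ (the profile $\mathbf{F}_\gamma$ is non-decreasing while $\xi$ decreases with $t$), the increasing map $F\mapsto bF/(\tfrac{b}{K}F+\mu_E+\nu_E)$ shows $\partial_t\underline{\mathbf{E}}\le 0=f_E(\underline{\mathbf{U}})$.

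Next I would treat the two diffusive equations in the region $\xi>0$. Using the two-dimensional radial Laplacian $\Delta u=u''+\tfrac1r u'$ and differentiating $\underline{\mathbf{M}}=\mathbf{M}_\gamma(\xi)$, one gets
\[
\partial_t\underline{\mathbf{M}}-\Delta\underline{\mathbf{M}}=-\mathbf{M}_\gamma''(\xi)-\Big(c+\tfrac1r\Big)\mathbf{M}_\gamma'(\xi).
\]
By \eqref{stat2:M} and the definition of $\underline{\mathbf{E}}$ one recognizes $-\mathbf{M}_\gamma''=g_M(\underline{\mathbf{M}},\underline{\mathbf{F}})=f_M(\underline{\mathbf{U}})$, whereas the remaining term $-(c+\tfrac1r)\mathbf{M}_\gamma'\le 0$ because $\mathbf{M}_\gamma$ is non-decreasing and $c,r>0$; hence $\partial_t\underline{\mathbf{M}}-\Delta\underline{\mathbf{M}}\le f_M(\underline{\mathbf{U}})$. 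The same computation for $\underline{\mathbf{F}}=\mathbf{F}_\gamma(\xi)$ yields, via \eqref{stat2:F},
\[
\partial_t\underline{\mathbf{F}}-\Delta\underline{\mathbf{F}}=g_F^{\varepsilon_\gamma}(\xi,\underline{\mathbf{M}},\underline{\mathbf{F}})-\Big(c+\tfrac1r\Big)\mathbf{F}_\gamma'(\xi)\le g_F^{\varepsilon_\gamma}(\xi,\underline{\mathbf{M}},\underline{\mathbf{F}}),
\]
so it remains only to show $g_F^{\varepsilon_\gamma}(\xi,\underline{\mathbf{M}},\underline{\mathbf{F}})\le f_F(\underline{\mathbf{U}})$. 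After cancelling the common factor $\rho\nu_E\underline{\mathbf{E}}$ and the identical death term, this reduces to
\[
\frac{\underline{\mathbf{M}}}{\underline{\mathbf{M}}+m_s^{\varepsilon_\gamma}(\xi)}\,\Gamma(\underline{\mathbf{M}})\ \le\ \frac{\underline{\mathbf{M}}}{\underline{\mathbf{M}}+\gamma_s\overline{M_s}}\,\Gamma\big(\underline{\mathbf{M}}+\gamma_s\overline{M_s}\big),
\]
which holds as soon as $m_s^{\varepsilon_\gamma}(\xi)\ge \gamma_s\overline{M_s}(t,x)$: that inequality makes the left-hand fraction smaller, and, $\Gamma$ being non-decreasing, $\Gamma(\underline{\mathbf{M}})\le\Gamma(\underline{\mathbf{M}}+\gamma_s\overline{M_s})$, so both non-negative factors on the left are dominated by those on the right.

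The hard part is therefore the pointwise domination $\gamma_s\overline{M_s}(t,x)\le m_s^{\varepsilon_\gamma}(\xi)=\varepsilon_\gamma e^{-\sqrt{\mu_s}\xi}$ on $\{\xi>0\}$, and this is exactly where $\underline{R}$ must be chosen large. Taking $\underline{R}>R_s$ places the whole region $\{r>ct+\underline{R}\}$ in the exponential tail of $\overline{M_s}$, where $\overline{M_s}=C\,e^{-\sqrt{\mu_s}(r-R_s-ct)}$ with $C=\max(\|M_s^0\|_\infty,\bar\Lambda/\mu_s)$; after dividing both sides by $e^{-\sqrt{\mu_s}(r-ct)}$ the required inequality becomes $\gamma_s C\,e^{\sqrt{\mu_s}R_s}\le \varepsilon_\gamma e^{\sqrt{\mu_s}\underline{R}}$, which is satisfied once $\underline{R}\ge R_s+\tfrac1{\sqrt{\mu_s}}\ln\!\big(\gamma_s C/\varepsilon_\gamma\big)$. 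Here the comparison of the two exponential rates is the whole point: the construction deliberately injects a \emph{fictitious} sterile density $m_s^{\varepsilon_\gamma}$ whose tail decays at the natural rate $\sqrt{\mu_s}$, and shifting it outward by $\underline{R}$ makes it dominate the genuine tail of $\overline{M_s}$.

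Finally I would clear the two routine loose ends. At the interface $r=ct+\underline{R}$ the radial derivatives of $\underline{\mathbf{M}}$ and $\underline{\mathbf{F}}$ jump upward from $0$ to $\mathbf{M}_\gamma'(0^+),\mathbf{F}_\gamma'(0^+)\ge 0$, so the distributional second derivative picks up a non-negative Dirac mass; consequently $-\Delta$ acquires a non-positive singular part, which is consistent with (indeed only reinforces) the sub-solution inequalities, so they persist across the interface in the distributional sense. For the initial data, enlarging $\underline{R}$ further so that $\underline{R}\ge R_0^1$ places the support $\{|x|>\underline{R}\}$ of $(\underline{\mathbf{E}},\underline{\mathbf{M}},\underline{\mathbf{F}})(0,\cdot)$ inside the region where \eqref{hyp:init2} forces the data to equal $(E^*,M^*,F^*)$, which dominates the profiles since $\mathbf{M}_\gamma\le M^*$, $\mathbf{F}_\gamma\le F^*$ and $\underline{\mathbf{E}}\le E^*$. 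Thus the whole difficulty is concentrated in the $F$-equation comparison through the sterile-male domination, while every other inequality is a direct consequence of the monotonicity built into $\mathbf{M}_\gamma,\mathbf{F}_\gamma$ and of the favourable sign of the curvature term $\tfrac1r\partial_r$ in two dimensions.
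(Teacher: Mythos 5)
Your proposal is correct and follows essentially the same route as the paper's proof: the same reduction of every component to the monotonicity of $\mathbf{M}_\gamma,\mathbf{F}_\gamma$ and the favourable sign of the drift term $-(c+\tfrac1r)\partial_r$, with the key step being the tail domination $\gamma_s\overline{M_s}\leq m_s^{\varepsilon_\gamma}(|x|-ct-\underline{R})$ on $\{|x|>ct+\underline{R}\}$, which fixes $\underline{R}$ (the paper's estimate \eqref{estim:Msbar} is exactly this, and your explicit threshold $\underline{R}\geq R_s+\tfrac{1}{\sqrt{\mu_s}}\ln(\gamma_s C/\varepsilon_\gamma)$ just makes the paper's ``large enough'' quantitative). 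If anything, you are slightly more careful than the paper in carrying the competitiveness factor $\gamma_s$ and the term $\Gamma(\underline{\mathbf{M}}+\gamma_s\overline{M_s})$ through the $F$-equation comparison, which the paper's displayed chain handles a bit loosely.
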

\begin{proof}
Let $\gamma>\gamma_0$. From Proposition \ref{prop:sol}, there exists $\varepsilon_\gamma$ and $(\mathbf{M}_\gamma,\mathbf{F}_\gamma)$. 
Then, $(\underline{\mathbf{E}},\underline{\mathbf{M}}, \underline{\mathbf{F}})$ is well-defined, continuous on $\RR^2$, radially non-decreasing and it converges to $(E^*,M^*,F^*)$ as $r$ goes to $+\infty$ thanks to Proposition \ref{prop:sol}.
Moreover, by definition of $\overline{M_s}$ in Lemma \ref{lem:ms}, there exists $\underline{R}>R_s$ large enough such that for $|x|\geq \underline{R}+ct$, we have 
\begin{equation}\label{estim:Msbar}
\overline{M_s}(t,x)\leq m_s^{\varepsilon_\gamma}(|x|-\underline{R}-ct) = \varepsilon_\gamma e^{-\sqrt{\mu_s}(|x|-\underline{R}-ct)}.
\end{equation}

We check that $(\underline{\mathbf{E}},\underline{\mathbf{M}},\underline{\mathbf{F}},\underline{\mathbf{M_s}})$ is a sub-solution of each equation of \eqref{syst} separately. 

\noindent \textit{Equation on $E$}. This is the simplest one. Indeed, we have by definition
  $$
  \partial_t \underline{\mathbf{E}} = -c \underline{\mathbf{E}}' \leq 0 = b \underline{\mathbf{F}} (1-\frac{\underline{\mathbf{E}}}{K}) - (\mu_E+\nu_E) \underline{\mathbf{E}},
  $$
  since $\underline{\mathbf{E}}$ is radially nondecreasing.
  
  \medskip
  
 \noindent  \textit{Equation on $M$}. For $r<\underline{R}+ct$ it is clear. For $r>\underline{R}+ct$, we compute using the fact that $\mathbf{M}_\gamma$ is nondecreasing
  \begin{align*}
  \partial_t \underline{\mathbf{M}} - \Delta \underline{\mathbf{M}} 
  & = - \mathbf{M}_\gamma''(|x|-ct-\underline{R}) -(c+\frac{1}{r}) \mathbf{M}_\gamma'(|x|-ct-\underline{R}) \\
  & \leq -\mathbf{M}_\gamma''(|x|-ct-\underline{R}) = (1-\rho)\nu_R \underline{\mathbf{E}} - \mu_M \underline{\mathbf{M}},
  \end{align*}
  where we use \eqref{stat2:M} for the last equality. Thus $\underline{\mathbf{M}}$ is a sub-solution for $r>\underline{R}+ct$. Since $\partial_r \underline{\mathbf{M}}((\underline{R}+ct)^+) = \mathbf{M}_\gamma'(0^+)\geq 0 = \partial_r \underline{\mathbf{M}}((\underline{R}+ct)^-)$, it is also a sub-solution on the whole domain $\RR^2$.
  
  \medskip

  \noindent \textit{Equation on $F$}. For $r<\underline{R}+ct$ it is clear. For $r>\underline{R}+ct$, we compute, using the fact that $\mathbf{F}_\gamma$ is nondecreasing
  \begin{align*}
    \partial_t \underline{\mathbf{F}} - \Delta \underline{\mathbf{F}}
    & = -\mathbf{F}_\gamma''(|x|-ct-\underline{R}) - (c+\frac{1}{r}) \mathbf{F}_\gamma'(|x|-ct-\underline{R}) \\
    & \leq -\mathbf{F}_\gamma''(|x|-ct-\underline{R}) = g_F^{\varepsilon_\gamma}(|x|-ct-\underline{R},\mathbf{M}_\gamma(|x|-ct-\underline{R}),\mathbf{F}_\gamma(|x|-ct-\underline{R})) \\
    & \leq \frac{\rho \nu_E b \underline{\mathbf{F}}}{\frac{b}{K} \underline{\mathbf{F}} + \mu_E + \nu_E} \dfrac{\underline{\mathbf{M}}}{\underline{\mathbf{M}}+m_s^{\varepsilon_\gamma}(|x|-ct-\underline{R})} \Gamma(\underline{\mathbf{M}}) - \mu_F \underline{\mathbf{F}}  \\
    & \leq \frac{\rho \nu_E b \underline{\mathbf{F}}}{\frac{b}{K} \underline{\mathbf{F}} + \mu_E + \nu_E} \dfrac{\underline{\mathbf{M}}}{\underline{\mathbf{M}} + \overline{M_s}} \Gamma(\underline{\mathbf{M}}) - \mu_F \underline{\mathbf{F}},
  \end{align*}
  where we have used \eqref{stat2:F} and \eqref{estim:Msbar}.

  Thus, $\underline{\mathbf{F}}$ is a subsolution on $r>\underline{R}+ct$ and since it is nondecreasing it verifies the condition at the interface $r=\underline{R}+ct$.

  \textit{Equation on $M_s$}. The conditions on $\underline{\mathbf{M_s}}$ have already been verified in Lemma \ref{lem:ms}.

  Finally, thanks to assumption \eqref{hyp:init2} the conditions on the initial data are verified easily provided $\underline{R}$ is large enough. This concludes the proof.    
\end{proof}

%%%%%%%%%%%%%%%%%%%%%%%%%%%%%%%

%%%%%%%%%%%%%%%%%%%%%%%%%%%%%%%%%%%%%%

\section{Construction of a radially symmetric super-solution} \label{sec:super-solution}

\subsection{Technical lemma}

\begin{lemma}\label{lem:phi1}
  Let $\mu>0$, $r_1>0$, and $u_0>0$, and let $0<\underline{c}<c'<c$ with $c'> \frac 23 c$.
  Let us define
  \begin{equation}\label{def:alphabeta}
    \alpha(t) := \frac{u_0}{\lambda_+ e^{\lambda_-[c- c']t}- \lambda_-e^{\lambda_+[c- c']t}},
    \qquad \beta(r) := \lambda_+ e^{\lambda_-r}- \lambda_-e^{\lambda_+ r}.
  \end{equation}
  where $\displaystyle \lambda_{\pm} = \frac{ - (c' + \frac{1}{r_1}) \pm \sqrt{(c' + \frac{1}{r_1})^2 + 2\mu }}{2}$. Then, the following hold~:
  \begin{itemize}
  \item[(i)] The function $t\mapsto \alpha(t)$ is positive and decreasing for $t\geq 0$, $\displaystyle \lim_{t\to +\infty} \alpha(t) = 0$, and for all $t\geq 0$, we have $\alpha'(t) > -\dfrac{\mu}{4} \alpha(t)$.
  \item[(ii)] The function $r\mapsto \beta(r)$ is positive and increasing for $r\geq 0$. Moreover for all $r>0$, we have $\beta'(r) <  \sqrt{\dfrac{\mu}{2}} \beta(r)$.
  \item[(iii)] The function $\phi_1$ defined by
  \begin{equation}\label{eq:phi1}
    \phi_1(x,t) = \alpha(t) \beta(|x|-(r_1+c't)),
  \end{equation}
  is a super-solution on $\Omega^1_t$ of the equation $\partial_t u - \Delta u = -\mu u$ which verifies the Dirichlet condition $u=u_0$ on the boundary $\{|x|=r_1+ct\}$ and Neumann condition $\partial_r u = 0$ on the boundary $\{|x|=r_1+c't\}$.
\end{itemize}
\end{lemma}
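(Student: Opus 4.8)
The plan is to extract everything from the algebra of the two exponents. Since $\lambda_\pm$ are by definition the roots of $\lambda^2 + s\lambda - \frac\mu2 = 0$ with $s := c' + \frac{1}{r_1}>0$, I would first record the identities $\lambda_+\lambda_- = -\frac\mu2$, $\lambda_+ + \lambda_- = -s$, and, most usefully, the ``master identity'' $\mu = 2\lambda_+(\lambda_+ + s)$, together with $\lambda_+>0>\lambda_-$. I would also note at the outset the reformulation $\alpha(t) = u_0/\beta((c-c')t)$, which ties part (i) directly to the analysis of $\beta$ in part (ii).

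For (ii), the single computation that drives everything is
\[
\lambda_+\beta(r) - \beta'(r) = \left(\lambda_+^2 + \tfrac\mu2\right) e^{\lambda_- r} > 0,
\]
obtained from $\beta'(r) = \frac\mu2\bigl(e^{\lambda_+ r} - e^{\lambda_- r}\bigr)$ and $\lambda_+\lambda_- = -\frac\mu2$. This yields the \emph{sharp} bound $0\le \beta'(r) < \lambda_+ \beta(r)$, hence $\beta$ is increasing and positive (since $\beta(0) = \lambda_+ - \lambda_- > 0$). The stated bound follows because $\frac\mu2 = \lambda_+(\lambda_+ + s) > \lambda_+^2$ forces $\lambda_+ < \sqrt{\mu/2}$, so $\beta' < \lambda_+\beta < \sqrt{\mu/2}\,\beta$.

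For (i), positivity, monotonicity and the limit $\alpha(t)\to 0$ are immediate from $\alpha = u_0/\beta((c-c')t)$ together with (ii) and $c-c'>0$. The only real content is the differential inequality: differentiating gives
\[
\frac{\alpha'(t)}{\alpha(t)} = -(c-c')\,\frac{\beta'\big((c-c')t\big)}{\beta\big((c-c')t\big)},
\]
so $\alpha' > -\frac\mu4\alpha$ is equivalent to $(c-c')\,\beta'/\beta < \frac\mu4$. Using the sharp bound $\beta'/\beta < \lambda_+$ from (ii), it suffices to show $(c-c')\lambda_+ < \frac\mu4$; and here the master identity converts this into $2(c-c') < \lambda_+ + s$, which holds since $2(c-c') < c'$ (this is precisely $c' > \frac23 c$) and $\lambda_+ + s > s > c'$. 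I expect this constant-chasing to be the main obstacle: the cruder bound $\beta'/\beta < \sqrt{\mu/2}$ of (ii) is \emph{not} enough here, so it is essential to keep the sharp exponent $\lambda_+$ and to recognize that the hypothesis $c'>\frac23 c$ is exactly what makes the constant $\frac\mu4$ work.

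Finally, for (iii), write $\phi_1 = \alpha(t)\beta(\xi)$ with $\xi = |x| - (r_1 + c't)$. The boundary conditions are free: $\beta'(0)=0$ gives the Neumann condition on $\{|x| = r_1 + c't\}$ (where $\xi=0$), and the normalization $\alpha(t)\beta((c-c')t) = u_0$ gives the Dirichlet condition on $\{|x| = r_1 + ct\}$ (where $\xi = (c-c')t$). For the super-solution inequality I would use that $\beta$ solves the linear ODE $\beta'' + s\beta' - \frac\mu2\beta = 0$. Substituting this, together with $\partial_t\xi = -c'$ and the radial Laplacian $\Delta = \partial_{rr} + \frac1r\partial_r$, collapses the computation to
\[
\partial_t \phi_1 - \Delta\phi_1 + \mu\phi_1 = \beta\Big(\alpha' + \tfrac\mu2\alpha\Big) + \alpha\beta'\Big(\tfrac1{r_1} - \tfrac1r\Big).
\]
On $\Omega^1_t$ one has $r\ge r_1 + c't \ge r_1$, so the second term is nonnegative ($\alpha,\beta'\ge0$), while the first is positive because (i) yields $\alpha' + \frac\mu2\alpha > \frac\mu4\alpha > 0$ and $\beta>0$. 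Hence $\partial_t\phi_1 - \Delta\phi_1 \ge -\mu\phi_1$ on $\Omega^1_t$, completing the proof.
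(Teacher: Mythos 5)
Your proof is correct and takes essentially the same route as the paper: the identity $\alpha(t)=u_0/\beta((c-c')t)$ makes your derivation of (i) literally the paper's computation in disguise, since the paper's bound $\frac{\mu}{2}\cdot\left(-\frac{1}{\lambda_-}\right)=\lambda_+$ is your sharp bound $\beta'/\beta<\lambda_+$, and its use of $-\lambda_->c'$ is your $\lambda_++s>c'$ (note $-\lambda_-=\lambda_++s$), both closing with $c'>\frac23 c$. Your computation in (iii), collapsing to $\beta\left(\alpha'+\frac{\mu}{2}\alpha\right)+\alpha\beta'\left(\frac{1}{r_1}-\frac{1}{r}\right)\geq \frac{\mu}{4}\alpha\beta>0$, is identical to the paper's.
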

\begin{remark}\label{rem:psi1}
Notice that for $t>0$ fixed, the function $\psi_1(r) := \alpha(t)\beta(r-(r_1+c't))$ is a solution of the boundary value problem
  \[ \left\lbrace 
      \begin{aligned}
        &-(c' + \frac{1}{r_1}) \psi_1' - \psi_1'' = - \frac{\mu }{2} \psi_1 \\
        &\psi_1(r_1 + ct) = u_0, \\
        &\psi_1'(r_1 + c't) = 0. 
      \end{aligned}
    \right.
  \]
\end{remark}

\begin{proof}
  For the point $(i)$, we notice that by definition of $\lambda_\pm$, we have $0<\lambda_+<-\lambda_-$. Next, $\alpha$ is clearly positive and goes to $0$ as $t$ grows to $+\infty$.
  Then, we compute
  \[ \alpha'(t) = -\frac{u_0\lambda_+\lambda_-[c-c']\left(e^{\lambda_-[c- c']t}- e^{\lambda_+[c- c']t}\right)}{(\lambda_+ e^{\lambda_-[c- c']t}- \lambda_-e^{\lambda_+[c- c']t})^2} = [c - c'] \cdot \frac{\lambda_+\lambda_-\left(e^{\lambda_+[c- c']t} - e^{\lambda_-[c- c']t}\right)}{\lambda_+ e^{\lambda_-[c- c']t}- \lambda_-e^{\lambda_+[c- c']t}} \alpha(t).\]
  By definition of $\lambda_\pm$, we have $\lambda_+\lambda_-= -\frac{\mu}{2}$.
  Moreover, since $0<\lambda_+<-\lambda_-$, we have 
$$
\frac{e^{\lambda_+[c- c']t} - e^{\lambda_-[c- c']t}}{\lambda_+ e^{\lambda_-[c- c']t}- \lambda_-e^{\lambda_+[c- c']t}}\leq \frac{e^{\lambda_+[c- c']t} - e^{\lambda_-[c- c']t}}{- \lambda_-e^{\lambda_+[c- c']t}} \leq -\frac{1}{\lambda_-} < \frac{1}{c'}.
$$
Then, for all $c'\in(\frac 23 c,c)$, which is equivalent to $\frac{c-c'}{c'}\in (0,\frac 12)$, we have
 \[\alpha'(t) > -\frac{\mu}{2} \frac{[c - c']}{c'} \alpha(t)> -\frac{\mu}{4} \alpha(t).\]

 For oint $(ii)$, the positivity of $\beta$ is clear since $\lambda_-<0<\lambda_+$. Then, we have
 \begin{align*}
   \beta'(r) = -\lambda_+\lambda_- (e^{\lambda_+ r} - e^{\lambda_- r}) > 0
 \end{align*}
 and
 \begin{align*}
   \beta'(r) = -\lambda_+\lambda_- (e^{\lambda_+ r} - e^{\lambda_- r}) < -\lambda_+\lambda_- e^{\lambda_+ r} < \lambda_+ \beta(r) < \sqrt{\dfrac{\mu}{2}} \beta(r).
 \end{align*}
 
  For oint $(iii)$, we first notice that we have
  $$
  -(c' + \frac{1}{r_1})\beta' - \beta'' + \frac{\mu}{2} \beta= 0.
  $$
Then, we compute, denoting $r=|x|$,
\[ 
\begin{aligned}
\partial_t \phi_1 - \Delta \phi_1 + \mu \phi_1  
&= \left( \alpha' + \frac{\mu}{2}\alpha \right)(t) \beta(r -[r_1 + c't]) + \alpha(t) \left[ -(c' + \frac{1}{r})\beta' - \beta'' + \frac{\mu}{2} \beta  \right](r- [r_1 +c't]) \\
&= \left( \alpha' + \frac{\mu}{2} \alpha \right)(t) \beta(r -[r_1 + c't])+\alpha(t) \left( \frac{1}{r_1} - \frac{1}{r}\right) \beta'(r-[r_1 +c't]).
\end{aligned}
\]
Recalling that $r_1 + c't < r < r_1 + ct$, $\alpha(t) > 0$ and $\beta$ is increasing, we have that 
$$
\alpha(t) \left( \frac{1}{r_1} - \frac{1}{r}\right) \beta'(r-[r_1 +c't])>0.
$$
Hence,
\begin{equation}\label{eq:a:T:1}
\partial_t \phi_1 - \Delta \phi_1 + \mu \phi_1 > \left( \alpha' + \frac{\mu}{2} \alpha \right)(t) \beta(r -[r_1 + c't]).
\end{equation}
Then, we use the inequality in point $(i)$ and deduce that, for all $c'\in(\frac 23 c,c)$, 
\begin{equation}
\label{eq:a:T:2}
(\alpha' + \frac{\mu}{2}\alpha ) \beta > \frac{\mu \alpha \beta}{4}>0. 
\end{equation}
We conclude thanks to \eqref{eq:a:T:1}.
\end{proof}

The next preliminary result is similar to \cite[Lemma 2]{ALM3}~:
\begin{lemma}\label{lem:psi}
  Let $\varepsilon>0$, $u_0\in (0,1)$, $c>0$, and $r_1>0$. 
  There exists a constant $L$, large enough, such that there exists a solution $\psi$ to the following system:
\[\left\lbrace 
\begin{aligned}
& -(c + \frac{1}{r_1}) \psi' - \psi'' = -\varepsilon \psi, \\
&\psi(0) = u_0, \quad \psi'(0) = 0, \\
&\psi(L) = 1, \quad \psi'(L) > 0.
\end{aligned}\right.\]
Moreover, $\psi$ is positive, increasing on $(0,L)$, and we have $0 < \psi'(r) < \sqrt{\varepsilon} \psi(r)$.
\end{lemma}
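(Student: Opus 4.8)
The plan is to solve the problem explicitly, as the ODE is linear with constant coefficients. Rewriting it as $\psi'' + (c+\frac{1}{r_1})\psi' - \varepsilon \psi = 0$, the characteristic equation $\lambda^2 + (c+\frac{1}{r_1})\lambda - \varepsilon = 0$ has roots
\[
\lambda_\pm = \frac{-(c+\frac{1}{r_1}) \pm \sqrt{(c+\frac{1}{r_1})^2 + 4\varepsilon}}{2},
\]
whose product is $-\varepsilon<0$, so that $\lambda_-<0<\lambda_+$. First I would solve the initial value problem $\psi(0)=u_0$, $\psi'(0)=0$ for the general solution $\psi = A e^{\lambda_+ r} + B e^{\lambda_- r}$; the resulting linear system gives $A = -\frac{u_0\lambda_-}{\lambda_+-\lambda_-}>0$ and $B = \frac{u_0\lambda_+}{\lambda_+-\lambda_-}>0$, i.e.
\[
\psi(r) = \frac{u_0}{\lambda_+-\lambda_-}\left(\lambda_+ e^{\lambda_- r} - \lambda_- e^{\lambda_+ r}\right),
\]
which has the same structure as $\beta$ in Lemma \ref{lem:phi1}. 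Since $A,B>0$, positivity of $\psi$ on $[0,+\infty)$ is immediate.

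Next I would establish monotonicity and the blow-up needed to locate $L$. Differentiating and using $\lambda_+\lambda_- = -\varepsilon$ yields $\psi'(r) = \frac{u_0\varepsilon}{\lambda_+-\lambda_-}\left(e^{\lambda_+ r} - e^{\lambda_- r}\right)$, which is strictly positive for $r>0$, so $\psi$ is strictly increasing on $(0,+\infty)$. Because $A>0$ and $\lambda_+>0$, the term $A e^{\lambda_+ r}$ forces $\psi(r)\to +\infty$ as $r\to+\infty$; combined with $\psi(0)=u_0<1$ and strict monotonicity, the intermediate value theorem provides a unique $L>0$ with $\psi(L)=1$, and then $\psi'(L)>0$ since $L>0$. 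This is the constant $L$ of the statement, and it is large precisely when $u_0$ or $\varepsilon$ is small.

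It remains to prove the derivative bound $\psi'(r)<\sqrt{\varepsilon}\,\psi(r)$. The key elementary fact is $\lambda_+<\sqrt{\varepsilon}$: this is equivalent to $\sqrt{(c+\frac{1}{r_1})^2+4\varepsilon} < (c+\frac{1}{r_1}) + 2\sqrt{\varepsilon}$, which holds after squaring since $(c+\frac{1}{r_1})\sqrt{\varepsilon}>0$. Writing $\psi = A e^{\lambda_+ r} + B e^{\lambda_- r}$ with $A,B>0$, I would then compute
\[
\sqrt{\varepsilon}\,\psi(r) - \psi'(r) = A\left(\sqrt{\varepsilon}-\lambda_+\right)e^{\lambda_+ r} + B\left(\sqrt{\varepsilon}-\lambda_-\right)e^{\lambda_- r} > 0,
\]
since $\sqrt{\varepsilon}-\lambda_+>0$ and $\sqrt{\varepsilon}-\lambda_->\sqrt{\varepsilon}>0$, which gives the claim.

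There is no serious obstacle in this argument, as everything reduces to explicit constant-coefficient computations; the only genuinely substantive point is the sign of $\sqrt{\varepsilon}-\lambda_+$, which underlies the derivative estimate and is the one inequality that truly exploits the precise value of $\lambda_+$.
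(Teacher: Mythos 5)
Your proof is correct and follows essentially the same route as the paper: both solve the constant-coefficient ODE explicitly (your formula $\psi(r)=\frac{u_0}{\lambda_+-\lambda_-}\left(\lambda_+ e^{\lambda_- r}-\lambda_- e^{\lambda_+ r}\right)$ is exactly the paper's), obtain $L$ from monotonicity and $\psi(r)\to+\infty$, and conclude with $0<\psi'<\lambda_+\psi<\sqrt{\varepsilon}\,\psi$. The only cosmetic difference is that you establish $\lambda_+<\sqrt{\varepsilon}$ by a direct squaring argument, whereas the paper gets it from the relation $\lambda_+\lambda_-=-\varepsilon$ together with $\lambda_+<-\lambda_-$ (as in its Lemma~\ref{lem:phi1}\,(ii)); the two are interchangeable.
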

\begin{proof}
  Denoting
  $$
  \widetilde\lambda_{\pm} = \frac{1}{2}\left(-\left(c+\dfrac{1}{r_1}\right) \pm \sqrt{\left(c+\dfrac{1}{r_1}\right)^2+4\varepsilon}\right), \qquad \widetilde\lambda_- < 0 < \widetilde\lambda_+,
  $$
  we have
  $$
  \psi(r) = \frac{u_0}{\sqrt{(c+\frac{1}{2})^2+4\varepsilon}}\left(\widetilde\lambda_+ e^{\widetilde\lambda_- r} - \widetilde\lambda_- e^{\widetilde\lambda_+ r}\right).
  $$
  We verify easily that $\psi$ is a continuous, differentiable and increasing function on $\RR^+$. Moreover, $\psi(0) = u_0 < 1$, $\lim_{r\to +\infty} \psi(r) = +\infty$. Hence, there exists $L$ such that $\psi(L) = 1$.
  Furthermore, like for point $(ii)$ in Lemma \ref{lem:phi1}, we obtain by simple computations
  $$
  \psi'(r) < \widetilde\lambda_+ \psi(r) < \sqrt{\varepsilon} \psi(r).
  $$
  
\end{proof}

\begin{lemma}\label{lem:phi2}
  Under the same assumption as in Lemma \ref{lem:psi}, let us fix $r_2 = L+r_1$ and define
  \begin{equation}\label{def:phi2}
    \phi_2(x, t) = \psi(|x|-(r_1+ct)),
  \end{equation}
  where $\psi$ is defined in Lemma \ref{lem:psi}.
  Then, the function $\phi_2$ is a super-solution of the equation $\partial_t u - \Delta u = - \varepsilon u$ on $\Omega^2_t$ with Dirichlet boundary conditions $u = u_0$ on $\{|x|=r_1+ct\}$ and $u=1$  on $\{|x|=r_2+ct\}$.

  Moreover, on the set $\{|x|=r_1+ct\}$, we have $\partial_r \phi_2(x,t) = 0$, and on $\{|x|=r_2+ct\}$, we have $\partial_r \phi_2(x,t)>0$.
\end{lemma}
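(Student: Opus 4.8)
The plan is to verify the three assertions in turn, each reducing to a direct substitution using the properties of $\psi$ from Lemma \ref{lem:psi}. Writing $r=|x|$ and introducing the moving-frame variable $s = r-(r_1+ct)$, so that $\phi_2(x,t)=\psi(s)$, the annulus of action $\Omega^2_t$ corresponds exactly to $s\in(0,L)$, where $L=r_2-r_1$.

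First I would dispose of the boundary conditions, which are immediate. On $\{|x|=r_1+ct\}$ we have $s=0$, so $\phi_2=\psi(0)=u_0$ and $\partial_r\phi_2=\psi'(0)=0$; on $\{|x|=r_2+ct\}$ we have $s=L$, so $\phi_2=\psi(L)=1$ and $\partial_r\phi_2=\psi'(L)>0$. These are precisely the values and signs supplied by Lemma \ref{lem:psi}.

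The heart of the argument is the super-solution inequality $\partial_t\phi_2-\Delta\phi_2+\varepsilon\phi_2\geq 0$ on $\Omega^2_t$. Computing in radial coordinates with $\partial_t\phi_2=-c\,\psi'(s)$, $\partial_r\phi_2=\psi'(s)$, and $\Delta\phi_2=\psi''(s)+\frac1r\psi'(s)$, I obtain
\[
\partial_t\phi_2-\Delta\phi_2+\varepsilon\phi_2 = -c\,\psi'(s)-\psi''(s)-\frac1r\psi'(s)+\varepsilon\,\psi(s).
\]
I would then invoke the ODE satisfied by $\psi$, namely $-\psi''+\varepsilon\psi=\bigl(c+\frac{1}{r_1}\bigr)\psi'$, to substitute for the grouped terms $-\psi''+\varepsilon\psi$. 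After the $-c\,\psi'$ contribution cancels against the $c\,\psi'$ produced, the expression collapses to
\[
\partial_t\phi_2-\Delta\phi_2+\varepsilon\phi_2 = \left(\frac{1}{r_1}-\frac{1}{r}\right)\psi'(s).
\]
On $\Omega^2_t$ one has $r\geq r_1+ct>r_1$, hence $\frac{1}{r_1}-\frac{1}{r}>0$, while $\psi'>0$ on $(0,L)$ by Lemma \ref{lem:psi}. The right-hand side is therefore nonnegative, which is exactly the super-solution property.

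I do not expect any genuine obstacle here: the computation is short, and the conclusion rests entirely on the design choice in Lemma \ref{lem:psi} of using the constant coefficient $c+\frac{1}{r_1}$ in place of the true radial drift $c+\frac1r$. Since $r\geq r_1$ throughout the annulus of action, this \emph{overestimates} the advection term, which is precisely what turns $\psi$ (and hence $\phi_2$) into a super-solution rather than an exact solution. The only point requiring mild care is the correct transcription of the chain rule for the moving-frame variable—the time derivative carrying a factor $-c$ and the space derivative a factor $+1$—and then recognizing that the leftover $\frac{1}{r_1}-\frac{1}{r}$ term has the favorable sign. This mirrors the mechanism already exploited in the proof of Lemma \ref{lem:phi1}(iii).
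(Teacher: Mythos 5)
Your proof is correct and follows essentially the same route as the paper: the same moving-frame computation reducing the parabolic operator to the leftover drift term $\left(\frac{1}{r_1}-\frac{1}{r}\right)\psi'$, whose sign is favorable because $r\geq r_1+ct\geq r_1$ on $\Omega^2_t$ and $\psi'\geq 0$, with the boundary statements read off directly from Lemma \ref{lem:psi}. Your closing remark explaining \emph{why} the constant coefficient $c+\frac{1}{r_1}$ was chosen in Lemma \ref{lem:psi} is a nice addition, but the argument itself is identical to the paper's.
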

\begin{proof}
  Indeed we verify easily that, denoting $r=|x|$,
  $$
  \partial_t \phi_2 - \Delta \phi_2 + \varepsilon \phi_2 = -(c+\frac 1r) \psi' - \psi'' + \varepsilon \psi = (\frac{1}{r_1} - \frac{1}{r}) \psi'.
  $$
  This latter quantity is nonnegative since on $\Omega^2_t$ we have $r_1+ct < r < r_2 + ct$ and $\psi$ is increasing.
  Finally, the Dirichlet boundary conditions follows straightforwardly from the definition of $\psi$ in Lemma \ref{lem:psi}.
\end{proof}

\subsection{Construction of a super-solution}

We first recall the notation $(E^*,M^*,F^*)$ for the positive equilibrium of system \eqref{syst:E}--\eqref{syst:F}. With the notations of Lemma \ref{lem:phi1} and Lemma \ref{lem:phi2}, we define $\barF(x,t)$ on $\RR^2$ by
\begin{equation}\label{eq:barF}
  \barF(x,t) =\left\lbrace 
    \begin{aligned}
      &F^* \alpha(t) \beta(0)\quad &&  \text{ on } \Omega^0_t, \\
      &F^* \phi_1(x,t) && \text{ on } \Omega^1_t ,  \\
      &F^* \phi_2(x,t) && \text{ on } \Omega^2_t,  \\
      &F^* \quad && \text{ on } \Omega^3_t. 
\end{aligned}\right.
\end{equation}
Notice that by construction the function $\barF$ is radially symmetric and nondecreasing with respect to $|x|$.

\begin{lemma}\label{lem:Fsuper}
  Let $\mu>0$, $r_1>0$, $\varepsilon>0$, and $u_0\in (0,1)$, and let $0<\underline{c}<c'<c$ with $c'> \frac 23 c$. Let $L = r_2 - r_1$ large enough as in Lemma \ref{lem:psi}.
  We define $g(x,t)$ by
  $$
  g(x,t) = \frac{\mu}{4} \mathbf{1}_{\Omega^0_t} + \mu \mathbf{1}_{\Omega^1_t} + \varepsilon \mathbf{1}_{\Omega_t^2}.
  $$
  Then, $\barF$ defined in \eqref{eq:barF} is a super-solution in $\RR^2$ of the equation
  $$
  \partial_t v - \Delta v + g(x,t) v = 0, \quad v(t=0) \leq \barF(x,0).
  $$
\end{lemma}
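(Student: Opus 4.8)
The plan is to verify the inequality $\partial_t\barF - \Delta\barF + g\barF\geq 0$ directly on the interior of each of the four pieces $\Omega^0_t,\Omega^1_t,\Omega^2_t,\Omega^3_t$, and then to glue these pieces into a genuine distributional super-solution on $\RR^2$ by checking the matching conditions at the three moving interfaces. The structural observation that makes the gluing work is that $\barF$ is \emph{continuous} across every interface: this is forced by the Dirichlet matching built into the construction (the boundary values in Lemmas \ref{lem:phi1}, \ref{lem:psi} and \ref{lem:phi2}). Continuity guarantees that the moving interfaces generate no singular (Dirac) contribution to $\partial_t\barF$ nor to $\nabla\barF$, so that the only surface term in the distribution $\partial_t\barF-\Delta\barF+g\barF$ comes from the jump of the radial derivative in $-\Delta\barF$.

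First I would treat the interiors. On $\Omega^3_t$ one has $\barF\equiv F^*$ and $g\equiv 0$, so the inequality holds with equality. On $\Omega^2_t$, $\barF=F^*\phi_2$ and $g=\varepsilon$, and the inequality is exactly the super-solution property of $\phi_2$ established in Lemma \ref{lem:phi2}. On $\Omega^1_t$, $\barF=F^*\phi_1$ and $g=\mu$, and the inequality is precisely point $(iii)$ of Lemma \ref{lem:phi1}. On $\Omega^0_t$, the function $\barF=F^*\alpha(t)\beta(0)$ does not depend on $x$, so $\Delta\barF=0$ and, since $g=\tfrac{\mu}{4}$ there, the inequality reduces to $\alpha'(t)+\tfrac{\mu}{4}\alpha(t)\geq 0$, which is exactly point $(i)$ of Lemma \ref{lem:phi1} (using also $\beta(0)=\lambda_+-\lambda_->0$).

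Next I would verify the two interface requirements needed for a second-order super-solution: continuity of $\barF$, and the correct sign of the jump of the outward radial derivative. Writing Green's identity on each region and summing, the singular part of $-\Delta\barF$ on an interface $\{|x|=\rho(t)\}$ equals $\big(\partial_r\barF|_{\mathrm{inner}}-\partial_r\barF|_{\mathrm{outer}}\big)$ times the surface measure, so one needs $\partial_r\barF|_{\mathrm{inner}}\geq\partial_r\barF|_{\mathrm{outer}}$ at each interface (a concave kink). Continuity at $\{|x|=r_1+c't\}$, $\{|x|=r_1+ct\}$ and $\{|x|=r_2+ct\}$ follows respectively from $\phi_1=\alpha(t)\beta(0)$, from $\phi_1=\phi_2=u_0$, and from $\phi_2=1$ there. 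For the radial derivatives: at $\{|x|=r_1+c't\}$ the inner derivative is $0$ and the outer derivative is $F^*\alpha(t)\beta'(0)=0$ by the Neumann condition of Lemma \ref{lem:phi1}, so the pieces match in a $C^1$ way; at $\{|x|=r_1+ct\}$ the outer derivative is $F^*\psi'(0)=0$ by Lemma \ref{lem:psi} while the inner derivative is $F^*\alpha(t)\beta'((c-c')t)>0$ by Lemma \ref{lem:phi1}$(ii)$; and at $\{|x|=r_2+ct\}$ the outer derivative is $0$ while the inner derivative is $F^*\psi'(L)>0$ by Lemma \ref{lem:psi}. In each case $\partial_r\barF|_{\mathrm{inner}}\geq\partial_r\barF|_{\mathrm{outer}}$, as required.

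Putting these together, for any nonnegative test function the classical (piecewise) part of $\partial_t\barF-\Delta\barF+g\barF$ is nonnegative by the interior estimates and the surface part is nonnegative by the interface estimates; hence $\barF$ is a super-solution in $\mathcal{D}'(\RR^2\times(0,+\infty))$, and the initial inequality $\barF(\cdot,0)\leq\barF(\cdot,0)$ is trivial. The main obstacle is the distributional bookkeeping at the \emph{moving} interfaces: one must confirm that continuity of $\barF$ removes any singular contribution of $\partial_t\barF$ along these interfaces, and that the normal-derivative jump in $-\Delta\barF$ enters with the sign compatible with super-solutions. Once this sign convention is fixed, the conclusion reduces entirely to the elementary inequalities collected in Lemmas \ref{lem:phi1}, \ref{lem:psi} and \ref{lem:phi2}.
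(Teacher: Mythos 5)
Your proposal is correct and follows essentially the same route as the paper's proof: interior verification on each subdomain via Lemmas \ref{lem:phi1}, \ref{lem:psi} and \ref{lem:phi2} (in particular $\alpha'>-\tfrac{\mu}{4}\alpha$ on $\Omega^0_t$), combined with continuity of $\barF$ and the concave-kink inequalities $\partial_r\barF|_{\mathrm{inner}}\geq\partial_r\barF|_{\mathrm{outer}}$ at the three moving interfaces, which the paper records as $\partial_r\phi_1\geq\partial_r\phi_2=0$ at $\{|x|=r_1+ct\}$, $\partial_r\phi_2\geq 0$ at $\{|x|=r_2+ct\}$, and coincidence of derivatives at $\{|x|=r_1+c't\}$. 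Your only addition is to spell out the distributional bookkeeping (no Dirac term from $\partial_t\barF$ or $\nabla\barF$ by continuity across the moving interfaces, and the sign of the surface term in $-\Delta\barF$), which the paper treats as standard and leaves implicit.
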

\begin{proof}
  By construction, $\barF$ is continuous on $\RR^2$, and for all fixed $t>0$ we have $\partial_r \barF(x,t) \geq 0$.
  From the definition of $\phi_1$ and $\phi_2$ in Lemma \ref{lem:phi1} and Lemma \ref{lem:phi2}, noticing also that on the boundary $\{|x|=r_1+ct\}$ we have $\partial_r \phi_1(x,t) \geq \partial_r \phi_2(x,t) = 0$, it is clear that
  $$
  \partial_t \barF - \Delta \barF + g(x,t) \barF \geq 0, \quad \text{ on } \Omega_t^1 \cup \Omega_t^2.
  $$

  On $\Omega_t^3$, $\barF$ is a constant therefore it is a super-solution since $g$ is nonnegative and on the boundary $\{|x|=r_2+ct\}$ we have $\partial_r \phi_2(x,t) \geq 0 = \partial_r \barF(x,t)$.
  
  Finally, on $\Omega_t^0$, we have $\Delta \barF = 0$ and $\partial_t \barF > -\frac{\mu}{4} \barF$ (see Lemma \ref{lem:phi1} $(i)$). We conclude by noticing also that the derivatives coincide at the boundary $\{|x|=r_1+c't\}$.
\end{proof}

\begin{lemma}\label{lem:Esuper}
  Under the same assumptions as in Lemma \ref{lem:Fsuper}, let us define $\barE$ as the solution of the equation
  \begin{equation}\label{def:E}
  \partial_t \barE = b\barF \left(1-\frac{\barE}{K}\right) - (\mu_E+\nu_E) \barE,
  \qquad \barE(t=0) = E^0 \leq \min\{K,C_0 \barF(x,0)\}, 
  \end{equation}
  for some positive constant $C_0$, $\barF$ being defined in \eqref{eq:barF}.
  Then, there exists $C_1\geq C_0$ large enough and $0<\mu$, $0<\varepsilon$ small enough, such that for all $t>0$ and $x\in\RR^2$, $\barE(t,x) \leq C_1 \barF(t,x)$.
\end{lemma}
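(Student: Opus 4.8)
The plan is to prove that $W:=C_1\barF$ is a super-solution of the \emph{diffusion-free} equation \eqref{def:E} satisfied by $\barE$, and then to conclude by the scalar comparison principle applied separately at each point $x$. Since \eqref{def:E} carries no Laplacian, for fixed $x$ the map $t\mapsto\barE(t,x)$ solves the scalar ODE $\barE'=f(t,\barE)$ with $f(t,y):=b\barF(t,x)\bigl(1-\tfrac yK\bigr)-(\mu_E+\nu_E)y$, which is affine (hence Lipschitz) and nonincreasing in $y$. Writing $\kappa:=\mu_E+\nu_E$, it then suffices to exhibit $C_1\ge C_0$ for which $\partial_t W\ge f(t,W)$ holds pointwise and $W(\cdot,0)\ge E^0$; the initial inequality is immediate from the assumption $E^0\le C_0\barF(\cdot,0)$ in \eqref{def:E} as soon as $C_1\ge C_0$.

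The core of the argument is a uniform one-sided bound on the logarithmic time derivative of $\barF$: I claim there is $\omega=\omega(\mu,\varepsilon)>0$, with $\omega\to0$ as $(\mu,\varepsilon)\to(0,0)$, such that $\partial_t\barF\ge-\omega\,\barF$ on all of $\RR^2$. This is checked region by region from \eqref{eq:barF}. On $\Omega^3_t$ one has $\barF\equiv F^*$, so $\partial_t\barF=0$. On $\Omega^0_t$, $\barF=F^*\alpha(t)\beta(0)$ and Lemma \ref{lem:phi1}$(i)$ gives $\partial_t\barF=F^*\alpha'(t)\beta(0)>-\tfrac\mu4\barF$. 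On $\Omega^1_t$, differentiating $\phi_1=\alpha(t)\beta(|x|-(r_1+c't))$ and combining $\alpha'>-\tfrac\mu4\alpha$ with $0<\beta'<\sqrt{\mu/2}\,\beta$ (Lemma \ref{lem:phi1}) yields $\partial_t\barF>-\bigl(\tfrac\mu4+c'\sqrt{\mu/2}\bigr)\barF$. Finally, on $\Omega^2_t$, since $\partial_t\phi_2=-c\,\psi'$ and $0<\psi'<\sqrt\varepsilon\,\psi$ (Lemma \ref{lem:psi}), one gets $\partial_t\barF>-c\sqrt\varepsilon\,\barF$. As $\barF$ is continuous and piecewise $C^1$ in $t$ at fixed $x$, it is locally absolutely continuous, so these four piecewise bounds integrate into the global inequality with $\omega:=\max\{\tfrac\mu4+c'\sqrt{\mu/2},\,c\sqrt\varepsilon\}$, the interface points being of measure zero.

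With this bound the super-solution property is a one-line computation. Because $W=C_1\barF\ge0$ and $\barF>0$ everywhere, $f(t,W)=b\barF-\tfrac bK\barF\,W-\kappa W\le b\barF-\kappa W$, whereas $\partial_t W=C_1\partial_t\barF\ge-\omega W$. Hence $\partial_t W\ge f(t,W)$ holds provided $-\omega W\ge b\barF-\kappa W$, i.e.\ $(\kappa-\omega)C_1\barF\ge b\barF$, which (dividing by $\barF>0$) reduces to $C_1\ge\tfrac{b}{\kappa-\omega}$. I would therefore first fix $\mu,\varepsilon$ small enough that $\omega<\kappa$, and then set $C_1:=\max\{C_0,\tfrac{b}{\kappa-\omega}\}$; the scalar comparison principle then yields $\barE(t,x)\le C_1\barF(t,x)$ for every $t>0$ and $x\in\RR^2$.

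The main obstacle is the uniform log-derivative bound of the second paragraph—specifically, producing a single $\omega$ valid on all four moving regions and vanishing as $\mu,\varepsilon\to0$; this is exactly where the quantitative estimates $\alpha'>-\tfrac\mu4\alpha$, $\beta'<\sqrt{\mu/2}\,\beta$ and $\psi'<\sqrt\varepsilon\,\psi$ from Lemmas \ref{lem:phi1} and \ref{lem:psi} are indispensable. Once $\omega<\kappa$ is secured, the remaining comparison step is entirely routine, and the smallness of $\mu,\varepsilon$ is compatible with the constraints already imposed on them in the construction of $\barF$.
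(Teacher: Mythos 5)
Your proof is correct and follows essentially the same route as the paper: the paper likewise shows that $C\barF$ is a super-solution of the ODE \eqref{def:E} by verifying, region by region on $\Omega^0_t,\Omega^1_t,\Omega^2_t,\Omega^3_t$, exactly the bounds $\alpha'>-\tfrac{\mu}{4}\alpha$, $\beta'<\sqrt{\mu/2}\,\beta$ and $\psi'<\sqrt{\varepsilon}\,\psi$ that you package into the single inequality $\partial_t\barF\ge-\omega\,\barF$ with $\omega=\max\{\tfrac{\mu}{4}+c'\sqrt{\mu/2},\,c\sqrt{\varepsilon}\}$, before concluding with the same comparison argument. Your presentation is merely a cleaner reorganization of that computation (making the gluing across the moving interfaces and the pointwise-in-$x$ ODE comparison explicit, handling $\Omega^3_t$ by the same differential inequality instead of the paper's direct bound $\barE\le K\le C\barF$, and using the correct sign $\partial_t\phi_2=-c\psi'$ where the paper's $\Omega^2_t$ line has a typo but the same effective bound $-c\sqrt{\varepsilon}$).
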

\begin{proof}
  We verify that there exist $C>0$ large enough and $\mu$ small enough, such that $C\barF$ is a super-solution of the equation for $\barE$, i.e.
  $$
  C\partial_t \barF - b\barF \left(1-\frac{C\barF}{K}\right) + (\mu_E+\nu_E) C\barF \geq 0.
  $$

  On $\Omega_t^0$, we compute
  \begin{align*}
    C\partial_t \barF - b\barF \left(1-\frac{C\barF}{K}\right) + (\mu_E+\nu_E) C\barF
    & = F^* \beta(0) \left( C\alpha'(t) + \alpha(t)\left(C(\mu_E+\nu_E) - b+\frac{C b F^* \alpha(t)\beta(0)}{K}\right)\right) \\
     & \geq F^* C\alpha(t) \beta(0) \left( -\frac{\mu}{4} + \mu_E+\nu_E - \frac{b}{C}+\frac{ b F^* \alpha(t)\beta(0)}{K}\right).
  \end{align*}
  Then, if $C$ is large enough and $\mu$ small enough, this latter term is nonnegative.

  On $\Omega_t^1$, we have
  \begin{align*}
    C\partial_t \barF - b\barF \left(1-\frac{C\barF}{K}\right) + (\mu_E+\nu_E) C\barF
    & = F^* \left( C\partial_t \phi_1 - b \phi_1 \left( 1 - \frac{C F^* \phi_1}{K}\right) + (\mu_E+\nu_E) C \phi_1\right) \\
    & = F^* \left( C \alpha' \beta - c'C \alpha \beta' - b \phi_1\left( 1 - \frac{C F^* \phi_1}{K}\right) + (\mu_E+\nu_E) C \phi_1\right) \\
    & > F^* \left( -C (\frac{\mu}{4} + c' \sqrt{\frac{\mu}{2}}) \phi_1- b \phi_1\left( 1 - \frac{C F^* \phi_1}{K}\right) + (\mu_E+\nu_E) C \phi_1\right),  
  \end{align*}
  where we use Lemma \ref{lem:phi1} (i) and (ii). We arrive at
  $$
  C\partial_t \barF - b\barF \left(1-\frac{C\barF}{K}\right) + (\mu_E+\nu_E) C\barF
  > C F^* \phi_1 \left( -(\frac{\mu}{4} + c' \sqrt{\frac{\mu}{2}}) - \frac{b}{C} + \mu_E + \nu_E \right).
  $$
  This latter term is nonnegative provided $C$ is large enough and $\mu$ is small enough.

  On $\Omega_t^2$, by the same token as above, we compute
  \begin{align*}
    C\partial_t \barF - b\barF \left(1-\frac{C\barF}{K}\right) + (\mu_E+\nu_E) C\barF
    & = C F^* \left(c\psi'-b\psi\left(\frac{1}{C}-\dfrac{F^*\psi}{K}\right) + (\mu_E+\nu_E) \psi\right) \\
    & > C F^* \psi \left(-c\sqrt{\varepsilon} - \frac{b}{C} + \mu_E+\nu_E\right).
  \end{align*}
  The latter term is nonnegative provided $\varepsilon$ is small enough and $C$ is large enough.

  Finally, on $\Omega_t^3$, we have $\barF = F^*$ is a constant and $\barE$ is bounded. Therefore, $\barE\leq C\barF$ on $\Omega_t^3$ for $C$ large enough.

\end{proof}

\begin{lemma}\label{lem:Msuper}
  Under the same assumptions as in Lemma \ref{lem:Fsuper}, let us define $\barM$ as the solution of the equation
  \begin{equation}\label{def:M}
  \partial_t \barM - \Delta \barM = (1-r)\nu_E \barE - \mu_M \barM,
  \qquad \barM(t=0) = M^0 \leq C_0 \barF(x,0),
  \end{equation}
  with $\barE$ defined in Lemma \ref{lem:Esuper}.
  Then, if $\mu>0$ and $\varepsilon>0$ are small enough, there exists $C_2\geq C_0$ large enough such that for all $t>0$ and $x\in\RR^2$, $\barM(x,t) \leq C_2 \barF(x,t)$.
\end{lemma}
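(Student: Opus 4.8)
The plan is to prove the bound $\barM \leq C_2\barF$ via the comparison principle for the scalar linear parabolic equation \eqref{def:M}, by showing that $C_2\barF$ is a super-solution for a suitable choice of the constant $C_2$. Since the equation defining $\barM$ is linear in $\barM$ (the coupling enters only through the prescribed source $(1-\rho)\nu_E\barE$), it suffices to check that
$$
\mathcal{L}[C_2\barF] := \partial_t(C_2\barF) - \Delta(C_2\barF) + \mu_M\, C_2\barF - (1-\rho)\nu_E\barE \geq 0
$$
in the distributional sense on $\RR^2$, together with the initial inequality $M^0 \leq C_2\barF(\cdot,0)$.

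First I would invoke the two estimates already at hand: Lemma \ref{lem:Fsuper} gives $\partial_t\barF - \Delta\barF \geq -g(x,t)\barF$ with $g = \frac{\mu}{4}\mathbf{1}_{\Omega^0_t} + \mu\mathbf{1}_{\Omega^1_t} + \varepsilon\mathbf{1}_{\Omega^2_t}$, and Lemma \ref{lem:Esuper} gives $\barE \leq C_1\barF$. Using the linearity of $\Delta$ and the positivity of $\barF$, I would bound
$$
\mathcal{L}[C_2\barF] \geq -C_2\, g\,\barF + \mu_M C_2\barF - (1-\rho)\nu_E C_1\barF = \barF\left(C_2(\mu_M - g) - (1-\rho)\nu_E C_1\right),
$$
where the source was controlled through $-(1-\rho)\nu_E\barE \geq -(1-\rho)\nu_E C_1\barF$. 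Since $g$ takes only the values $\frac{\mu}{4}$, $\mu$, $\varepsilon$ or $0$, choosing $\mu$ and $\varepsilon$ small enough that $\max(\mu,\varepsilon) < \mu_M$ makes $\mu_M - g$ bounded below by a positive constant on every subdomain; then taking $C_2 \geq \max\{C_0,\ (1-\rho)\nu_E C_1/(\mu_M-\max(\mu,\varepsilon))\}$ renders the bracket nonnegative. On $\Omega^3_t$, where $\barF \equiv F^*$ and $g\equiv 0$, the same computation reduces to $F^*(\mu_M C_2 - (1-\rho)\nu_E C_1)\geq 0$ (using $\barE \leq C_1 F^*$ there), which holds for the same $C_2$.

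It then remains to handle the interfaces between the subdomains $\Omega^i_t$ and the initial data. The radial derivative of $\barF$ has only favorable jumps — this is exactly what was verified in the proof of Lemma \ref{lem:Fsuper}: $\partial_r\phi_1 \geq \partial_r\phi_2$ across $\{|x|=r_1+ct\}$, $\partial_r\phi_2 \geq 0$ across $\{|x|=r_2+ct\}$, and the derivatives match across $\{|x|=r_1+c't\}$ — so multiplying by the positive constant $C_2$ preserves the distributional super-solution inequality across each interface. For the initial condition, $M^0 \leq C_0\barF(\cdot,0) \leq C_2\barF(\cdot,0)$ because $C_2 \geq C_0$. The comparison principle for \eqref{def:M} then yields $\barM \leq C_2\barF$ for all $t>0$ and $x\in\RR^2$. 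The only genuinely new point compared with Lemma \ref{lem:Esuper} is the diffusion term: whereas the $\barE$ bound followed from a pointwise ODE argument, here the conclusion must pass through the parabolic comparison principle, and the main thing to watch is that the kinks of $\barF$ across the moving interfaces do not spoil the super-solution property — which is guaranteed by the radial monotonicity of $\barF$ established earlier.
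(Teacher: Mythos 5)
Your proof is correct and takes essentially the same route as the paper's: the paper also shows that $C\barF$ is a super-solution of \eqref{def:M} by combining the inequality $\partial_t \barF - \Delta\barF + g\barF \geq 0$ from Lemma \ref{lem:Fsuper} with the bound $\barE \leq C_1\barF$ from Lemma \ref{lem:Esuper}, then chooses $\mu,\varepsilon$ small and $C$ large so that $C(\mu_M - g) - (1-\rho)\nu_E C_1 \geq 0$, and concludes by the parabolic comparison principle. Your additional explicit verification of the interface jumps and the initial condition is sound and merely makes precise what the paper leaves implicit in the phrase ``super-solution'' of Lemma \ref{lem:Fsuper}.
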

\begin{proof}
  We compute for some constant $C>0$, using Lemma \ref{lem:Fsuper},
  \begin{align*}
    C\partial_t \barF - C\Delta \barF  + \mu_M C \barF - (1-r)\nu_E \barE
    & = -C g(x,t) \barF + C \mu_M \barF - (1-r)\nu_E \barE  \\
    & \geq (-C g(x,t) + C \mu_M - (1-r)\nu_E C_1) \barF,
  \end{align*}
  where we use Lemma \ref{lem:Esuper} for the last inequality.
  Hence, if we take $\mu$ and $\varepsilon$ small enough such that $\mu_M+g>0$, we may take $C$ large enough such that the right hand side of the latter inequality is nonnegative. It implies that for $C$ large enough $C\barF$ is a super-solution of equation \eqref{def:M} which allows to conclude the proof.
\end{proof}

\begin{proposition}\label{prop1}
  Let $\mu>0$, $r_1>0$, $\varepsilon>0$, $u_0\in (0,1)$, and $0<\underline{c}<c'<c$ with $c'> \frac 23 c$. Let $L = r_2 - r_1$ large enough as in Lemma \ref{lem:psi}.

  Let us assume that
  \begin{itemize}
  \item[(i)] In the \textit{bistable case $\Gamma(M) = 1-e^{-\gamma M}$}, we have
    \begin{equation}\label{hypMs1}
      M_s \geq \overline{M_s} \mathbf{1}_{\{r_1+ct\leq |x| \leq r_2+ct\}}.
    \end{equation}
  \item[(ii)] In the \textit{monostable case $\Gamma(M) = 1$}, we have
    \begin{equation}
      \label{hypMs2}
      M_s \geq \overline{M_s} \mathbf{1}_{\{r_1+ct\leq |x| \leq r_2+ct\}} + \overline{M_s} \frac{\barF(x,t)}{u_0} \mathbf{1}_{\{|x| \leq r_1+ct\}}.
    \end{equation}
  \end{itemize}
  Then, for $\mu$, $\varepsilon$ and $u_0$ small enough and $\overline{M_s}$ large enough, $(\barE,\barM,\barF)$ defined respectively in \eqref{def:E}, \eqref{eq:barF}, \eqref{def:M}, is a super solution of system \eqref{syst:E}--\eqref{syst:F} with initial data satisfying \eqref{hyp:init}.
  
\end{proposition}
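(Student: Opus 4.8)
The plan is to observe that two of the three equations hold as equalities, so that all the genuine work sits on the $F$-equation. By construction $\barE$ solves \eqref{def:E} and $\barM$ solves \eqref{def:M} exactly, and since $f_E$ and $f_M$ do not involve $M_s$, the super-solution inequalities for the $E$- and $M$-components hold with equality for the order $\preceq$. Moreover $\partial_{M_s} f_F\le 0$ (computed in the introduction), so $f_F$ is nonincreasing in its last argument and replacing $M_s$ by the lower bounds postulated in \eqref{hypMs1}--\eqref{hypMs2} only enlarges $f_F$; it therefore suffices to verify the $F$-inequality with those lower bounds. Finally, Lemma \ref{lem:Fsuper} gives $\partial_t\barF-\Delta\barF\ge -g(x,t)\barF$, so the whole proposition reduces to the pointwise inequality
\begin{equation*}
(\mu_F-g(x,t))\,\barF \;\ge\; \rho\nu_E\,\barE\,\frac{\barM}{\barM+\gamma_s M_s}\,\Gamma(\barM+\gamma_s M_s)
\end{equation*}
on each of the four subdomains $\Omega_t^0,\dots,\Omega_t^3$.

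Before splitting into regions I would record the a priori bounds that drive everything. From \eqref{eq:barF} and Lemmas \ref{lem:phi1}--\ref{lem:phi2} one has $\barF\le F^*$ everywhere (since $\alpha(t)\beta(0)\le u_0<1$, $\phi_1\le u_0<1$ and $\phi_2\le 1$); comparing $\barE,\barM$ with the constant equilibria $E^*,M^*$, which are super-solutions of \eqref{def:E}--\eqref{def:M} once $\barF\le F^*$, and using $E^0\le E^*$, $M^0\le M^*$, yields $\barE\le E^*$ and $\barM\le M^*$. In parallel, Lemmas \ref{lem:Esuper}--\ref{lem:Msuper} give $\barE\le C_1\barF$ and $\barM\le C_2\barF$. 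The order of the quantifiers is important: I would first fix $\mu,\varepsilon$ small (this pins down $C_1,C_2$ and makes Lemma \ref{lem:Fsuper} applicable), then take $u_0$ small, and finally $\overline{M_s}$ large, so that no circular dependence arises.

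The region-by-region check then proceeds as follows. On $\Omega_t^3$ one has $g=0$ and $\barF=F^*$, and the bounds $\barE\le E^*$, $\barM\le M^*$ together with $M_s\ge 0$ and the monotonicity of $f_F$ give $f_F(\barE,\barM,F^*,M_s)\le f_F(E^*,M^*,F^*,0)=\rho\nu_E E^*\Gamma(M^*)-\mu_F F^*=0$, which is exactly the required inequality. On $\Omega_t^0$ and $\Omega_t^1$ the coefficient $g$ equals $\mu/4$ or $\mu$ and $\barF\le F^* u_0$ is small. In the bistable case I would use the Allee bound $\Gamma(\barM+\gamma_s M_s)\le\gamma(\barM+\gamma_s M_s)$, which collapses the mating term to $\gamma\barM\le\gamma C_2\barF$; combined with $\barE\le C_1\barF$ the right-hand side is $\le \rho\nu_E\gamma C_1C_2\barF^2\le \rho\nu_E\gamma C_1C_2 F^* u_0\,\barF$, dominated by $(\mu_F-g)\barF$ once $u_0$ is small. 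In the monostable case there is no Allee effect, so one instead invokes the stronger hypothesis \eqref{hypMs2}, namely $M_s\ge \overline{M_s}\,\barF/u_0$ on $\{|x|\le r_1+ct\}$, to bound $\tfrac{\barM}{\barM+\gamma_s M_s}\le C_2 u_0/(\gamma_s\overline{M_s})$ and close the inequality for $\overline{M_s}$ large. On the annulus of action $\Omega_t^2$ (both cases) the hypothesis $M_s\ge \overline{M_s}$ gives $\tfrac{\barM}{\barM+\gamma_s M_s}\le C_2 F^*/(\gamma_s\overline{M_s})$, so the right-hand side is $O(1/\overline{M_s})$ times $\barF$ and is beaten by $(\mu_F-\varepsilon)\barF$ for $\overline{M_s}$ large. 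It then remains to check the interface derivatives (already encoded in Lemmas \ref{lem:phi1}--\ref{lem:Fsuper}) and the initial domination $\barF(\cdot,0)\ge F^0$, $\barE(\cdot,0)\ge E^0$, $\barM(\cdot,0)\ge M^0$, which follows from \eqref{hyp:init} provided the radii are compatible (e.g. $r_1\le R_0^0$ and $R_0^1$ large enough).

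The main obstacle, I expect, is the intermediate region $\Omega_t^1$ in the bistable case: there the release has not yet switched on (so $M_s$ carries no lower bound beyond $M_s\ge 0$) while $\barF$ has not yet reached $F^*$, so neither of the two easy mechanisms is available. The verification there hinges entirely on turning the Allee structure $\Gamma(y)\le\gamma y$ into a genuinely quadratic upper bound in $\barF$ and on the smallness $\barF\le F^* u_0$ that the super-solution was designed to enforce; the monostable analogue is precisely what forces the modified release profile \eqref{Lambdamonostable}. The residual difficulty is organizational: exhibiting one choice of $(\mu,\varepsilon,u_0,\overline{M_s})$, selected in that order, for which all four regions and both cases close simultaneously.
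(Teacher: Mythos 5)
Your proposal follows essentially the same route as the paper's proof: the $E$- and $M$-components hold by construction (Lemmas \ref{lem:Esuper} and \ref{lem:Msuper}), the monotonicity $\partial_{M_s} f_F \leq 0$ lets you substitute the lower bounds \eqref{hypMs1}--\eqref{hypMs2} for $M_s$, and Lemma \ref{lem:Fsuper} reduces everything to the pointwise inequality \eqref{estimFbar}, which the paper also verifies region by region with exactly your mechanisms: on the annulus $\Omega^2_t$, the bound $\barM \leq C_2\barF \leq C_2 F^*$ together with $\overline{M_s}$ large; on $\{|x|<r_1+ct\}$, smallness of $u_0$ combined with the Allee structure in the bistable case (the paper uses $\Gamma(\barM)\leq 1-e^{-\gamma C_2 u_0}$, you use $\Gamma(y)\leq \gamma y$ --- the same estimate in disguise) and the reinforced release \eqref{hypMs2} in the monostable case, closing with $\leq (\mu_F-\mu)\barF$; your quantifier order ($\mu,\varepsilon$ first, then $u_0$, then $\overline{M_s}$) also matches the paper's.

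The one place you genuinely depart from the paper is $\Omega^3_t$, and it is worth flagging in both directions. The paper's written proof does not discuss this region at all, and your instinct that something must be checked there is correct: $g=0$ on $\Omega^3_t$, so the crude bounds $\barE \leq C_1\barF$, $\barM \leq C_2\barF$ with $C_1,C_2$ large cannot close \eqref{estimFbar}, and one really needs $\barE\leq E^*$, $\barM\leq M^*$ in order to invoke the equilibrium identity $\rho\nu_E E^*\Gamma(M^*)=\mu_F F^*$, exactly as you do. However, your justification of those a priori bounds is not covered by the stated hypotheses: \eqref{hyp:init} gives only $E^0\leq\min\{K,C_0 F^0\}$ and $M^0\leq C_0 F^0$ with an \emph{arbitrary} constant $C_0$, and on the intermediate annulus $R_0^0<|x|\leq R_0^1$ one merely has $F^0\leq F^*$, so $M^0$ may be as large as $C_0 F^* > M^*$; the comparison-with-equilibrium step therefore needs either a strengthened assumption ($E^0\leq E^*$, $M^0\leq M^*$, consistent with the ``well-prepared'' spirit of \eqref{hyp:init2}) or a substitute argument. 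A second, smaller slip: for the initial domination $\barF(\cdot,0)\geq F^0$ you need $r_2\leq R_0^0$, not just $r_1\leq R_0^0$, since $\barF(x,0)<F^*$ on the whole annulus $r_1<|x|<r_2$ while $F^0$ may equal $F^*$ as soon as $|x|>R_0^0$. Apart from these two points --- the first of which exposes a lacuna in the paper's own write-up rather than an error specific to your argument --- your proof coincides with the paper's.
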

\begin{proof}
  We first notice that due to assumption \eqref{hyp:init}, the conditions on the initial data are clearly satisfied.
  Moreover, from Lemma \ref{lem:Esuper} and Lemma \ref{lem:Msuper}, we already know that $\barE$ and $\barM$ are super-solutions. Then, we are left to prove that $\barF$ is a super-solution for \eqref{syst:F}. From Lemma \ref{lem:Fsuper} it is enough to prove that
  \begin{equation}\label{estimFbar}
    r\nu_E \barE \frac{\barM}{\barM+\gamma_s M_s} \Gamma(\barM+ \gamma_s M_s) - \mu_F \barF \leq -g(t,x)\barF,
  \end{equation}
  where we recall that $g$ is defined in the statement of Lemma \ref{lem:Fsuper}.
  
  On the set $\{r_1+ct < |x| < r_2+ct\}$, we have $\barM\leq C_2 \barF \leq C_2 F^*$, and
  \begin{align*}
    r\nu_E \barE \frac{\barM}{\barM+\gamma_s M_s} \Gamma(\barM+\gamma_s M_s) - \mu_F \barF
    & \leq r\nu_E \barE \frac{C_2 F^*}{C_2 F^*+\gamma_s \overline{M_s}} - \mu_F \barF  \\
    & \leq r\nu_E C_1 \barF \frac{C_2 F^*}{C_2 F^*+\gamma_s \overline{M_s}} - \mu_F \barF < -\varepsilon\barF,
  \end{align*}
  for $\overline{M_s}$ large enough.

  By definition, on the set $\{|x|< r_1+ct\}$, we have $\barF\leq u_0$, which implies, using  Lemma \ref{lem:Esuper} and Lemma \ref{lem:Msuper}, that $\barE \leq C_1 u_0$ and $\barM \leq C_2 u_0$.
  Then, recalling that $M_s \mapsto \frac{\barM}{\barM+\gamma_s M_s} \Gamma(\barM+\gamma_s M_s)$ is nonincreasing, we have
  $$
  \frac{\barM}{\barM+\gamma_s M_s} \Gamma(\barM+\gamma_s M_s) \leq \Gamma(\barM) \leq \Gamma(C_2 u_0).
  $$
  Therefore, in the bistable case $(i)$, we have
  \begin{align*}
    r\nu_E \barE \frac{\barM}{\barM+\gamma_s M_s} \Gamma(\barM+\gamma_s M_s)
    & \leq r\nu_E \barE \left(1-e^{-\gamma C_2 u_0}\right)   \\
    & \leq r\nu_E C_1 \left(1-e^{-\gamma C_2 u_0}\right) \barF,
  \end{align*}
  where we use Lemma \ref{lem:Esuper} for the last inequality. Hence, if we take $\mu<\mu_F$, there exists $u_0$ small enough such that
  $$
  r\nu_E \barE \frac{\barM}{\barM+\gamma_s M_s} \Gamma(\barM+\gamma_s M_s)
  \leq (\mu_F - \mu) \barF.
  $$
  This implies that on the set $\{|x|<r_1+ct\}$, in the bistable case $(i)$, we have
  $$
  r\nu_E \barE \frac{\barM}{\barM+\gamma_s M_s} \Gamma(\barM+\gamma_s M_s) - \mu_F \barF \leq -\mu \barF.
  $$
  In the monostable case $(ii)$, by assumption \eqref{hypMs2} on $M_s$, we have, on the set $\{|x|<r_1+ct\}$,
  \begin{align*}
    r\nu_E \barE \frac{\barM}{\barM+\gamma_s M_s} \Gamma(\barM+\gamma_s M_s)
    & \leq r\nu_E \barE \frac{C_2 \barF}{C_2 \barF + \gamma_s \overline{M_s} \frac{\barF}{u_0} }   \\
    & \leq r\nu_E C_1 \frac{C_2 u_0}{C_2 u_0 + \gamma_s\overline{M_s}} \barF,
  \end{align*}
  where we use the estimate $\barE\leq C_1 \barF$ from Lemma \ref{lem:Esuper} for the last inequality. Then, for $\overline{M_s}$ large enough, we have the desired estimate 
  $$
  r\nu_E \barE \frac{\barM}{\barM+\gamma_s M_s} \Gamma(\barM) - \mu_F \barF \leq -\mu \barF.
  $$
  
\end{proof}

The following lemma shows how to obtain conditions \eqref{hypMs1} and \eqref{hypMs2}.

\begin{lemma}\label{lem:Ms}
Let $c>0$, $0<R_1<r_1<r_2<R_2$ be fixed. 
Let $M_s$ be the solution of the equation
\begin{equation}\label{equaMs}
\partial_t M_s - \Delta M_s = \Lambda - \mu_s M_s, 
\qquad M_s(t=0) = M_s^0,
\end{equation}
with $M_s^0$ as in \eqref{hyp:init} with $R_0^0>R_2$.
Then, we have~:
\begin{itemize}
\item[(i)] If $\Lambda = \bar{\Lambda} \mathbf{1}_{\{R_1+ct<|x|<R_2+ct\}}$, then the solution of equation  \eqref{equaMs} verifies
$$
M_s(x,t) \geq C_{12} \bar{\Lambda} \mathbf{1}_{\{r_1+ct<|x|<r_2+ct\}},
$$
where $C_{12}$ is a constant depending on $c+\frac{1}{r_2}$, $r_1-R_1$, $R_2-r_2$, and $\mu_s$.
\item[(ii)] If $\Lambda = \bar{\Lambda} \mathbf{1}_{\{R_1+ct<|x|<R_2+ct\}} + \bar{\Lambda} e^{\eta([x|-R_1-ct)}\mathbf{1}_{\{|x|<R_1+ct\}}$, for some $\eta>0$, then the solution of \eqref{equaMs} satisfies
$$
M_s(x,t) \geq C_{12} \bar{\Lambda} \Big(\mathbf{1}_{\{r_1+ct<|x|<r_2+ct\}} + e^{\eta(|x|-r_1-ct)} \mathbf{1}_{\{|x|\leq r_1+ct\}}\Big),
$$
where $C_{12}$ is a constant depending on $c+\frac{1}{r_2}$, $r_1-R_1$, $R_2-r_2$, and $\mu_s$.
\end{itemize}
\end{lemma}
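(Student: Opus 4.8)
The plan is to prove each lower bound by the comparison principle for the linear equation \eqref{equaMs}, but working on a moving domain rather than on all of $\RR^2$. For case $(i)$ I would compare $M_s$ with a subsolution $\underline{M_s}(x,t)=\bar\Lambda\,v(|x|-ct)$ supported in the moving annulus $\mathcal A_t:=\{R_1+ct<|x|<R_2+ct\}$, where $v$ is a fixed radial profile written in the travelling variable $z=|x|-ct$. Since $M_s\ge 0$, the solution dominates $\underline{M_s}=0$ on the lateral moving boundary $\{|x|=R_1+ct\}\cup\{|x|=R_2+ct\}$, so by the parabolic maximum principle on the non-cylindrical domain $\bigcup_{t>0}\mathcal A_t\times\{t\}$ it suffices to build $v$ positive on $(R_1,R_2)$, vanishing at the endpoints, with $\bar\Lambda v(\cdot)\le M_s^0$ at $t=0$, and such that $\partial_t(\bar\Lambda v)-\Delta(\bar\Lambda v)\le \Lambda-\mu_s\bar\Lambda v$ in $\mathcal A_t$. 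In the travelling variable the radial operator reads $-(c+\tfrac1r)v'-v''+\mu_s v$ with $r=z+ct$, and the only non-autonomous term is $\tfrac1r v'$, exactly as in Lemmas \ref{lem:phi1}--\ref{lem:phi2}; its sign is governed by the monotonicity of $v$. One then sets $C_{12}:=\min_{[r_1,r_2]}v>0$, which is positive because $[r_1,r_2]$ is a compact subset of the open interval $(R_1,R_2)$ on which $v>0$.

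To construct $v$ I would glue two explicit constant-drift profiles at their common maximum $z^\ast$. On the rising part $[R_1,z^\ast]$ I solve $-c\,v'-v''+\mu_s v=1$ with $v(R_1)=0$, $v'(z^\ast)=0$, so $v$ is increasing and the true operator equals $1-\tfrac1r v'\le 1$; on the falling part $[z^\ast,R_2]$ I solve $-(c+\tfrac1{R_1})v'-v''+\mu_s v=1$ with $v'(z^\ast)=0$, $v(R_2)=0$, so $v$ is decreasing and the true operator equals $1+(\tfrac1{R_1}-\tfrac1r)v'\le 1$ because $\tfrac1{R_1}\ge\tfrac1r$ and $v'\le 0$ there. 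The gluing point $z^\ast$ is selected by the intermediate value theorem so that the two one-sided values $v(z^\ast)$ coincide; since $v'(z^\ast)=0$ on both sides, both pieces give the same $v''(z^\ast)=\mu_s v(z^\ast)-1$, hence $v$ is $C^2$ at $z^\ast$ and the peak is smooth, contributing no adverse Dirac mass to $-v''$. The maximum principle for each frozen problem gives $0<v\le\tfrac1{\mu_s}$ and a single interior peak, so $\bar\Lambda v$ is a genuine distributional subsolution on $(R_1,R_2)$, and $C_{12}$ ends up depending only on $c$, $\mu_s$ and the two margins $r_1-R_1$, $R_2-r_2$ through the exponential rates of these profiles. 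Finally, since $R_0^0>R_2$ the support of $\underline{M_s}(\cdot,0)$ lies in $\{|x|\le R_2\}\subset\{|x|\le R_0^0\}$, where $M_s^0\ge \bar\Lambda/\mu_s\ge \bar\Lambda v=\underline{M_s}(\cdot,0)$, so the initial ordering holds.

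For case $(ii)$ the same annulus argument yields the constant lower bound on $\{r_1+ct<|x|<r_2+ct\}$, and it remains to propagate it inward. On the moving disc $\{|x|<r_1+ct\}$, where the release contains the extra term $\bar\Lambda e^{\eta(|x|-R_1-ct)}$, I would use the increasing exponential subsolution $\underline{M_s}(x,t)=C_{12}\bar\Lambda\,e^{\eta(|x|-r_1-ct)}$. Its value on the lateral boundary $\{|x|=r_1+ct\}$ equals $C_{12}\bar\Lambda$, which is dominated by $M_s$ thanks to case $(i)$; because this profile is radially increasing, the term $-\tfrac1r\partial_r$ only helps, and the subsolution inequality reduces to $\mu_s C_{12}\le e^{\eta(r_1-R_1)}$, which holds since $C_{12}\le 1/\mu_s$ and $r_1>R_1$ (and likewise against the constant part $\bar\Lambda$ of the source on $\{R_1+ct<|x|<r_1+ct\}$). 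The initial ordering again follows from $R_0^0>R_2$ together with $C_{12}\le 1/\mu_s$.

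The main obstacle is precisely the curvature term $\tfrac1r\partial_r$: it renders the travelling-wave operator non-autonomous, and since a subsolution that lives inside the release annulus and vanishes at its edges must be a non-monotone bump, no single constant-drift comparison can control $\tfrac1r v'$ over the whole profile. The device that resolves this is to split the bump at its peak and freeze the drift from the correct side on each monotone piece — below by $c$ where $v$ increases, above by $c+\tfrac1{R_1}$ where $v$ decreases — while forcing $v'(z^\ast)=0$ so that the two pieces match in a $C^2$ fashion. Carrying out the comparison on the moving annulus (respectively the moving disc) instead of on all of $\RR^2$ is what lets the subsolution vanish on the release boundary and avoids any conflict with the datum $M_s^0$, which is only nonnegative and vanishes for large $|x|$.
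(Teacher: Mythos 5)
Your proof is correct, and it follows the same broad strategy as the paper---compare $M_s$ with an explicit traveling subsolution of the form $\bar\Lambda\,v(|x|-ct)$---but the construction is genuinely different. The paper builds a single subsolution on all of $\RR^2$: a plateau $\widehat{\mathrm{M}}$ on $(r_1,r_2)$ with Gaussian tails $e^{-a(\cdot-r_1)^2}$ and $e^{-b(\cdot-r_2)^2}$, with $a,b$ chosen so that the positive contributions of the operator in the transition zones $(R_1,r_1)$ and $(r_2,R_2)$ are absorbed by the source $\bar\Lambda$ and become nonpositive beyond them; in case $(ii)$ it glues the interior exponential $C^1$ to a Gaussian via the parameters $\varepsilon,a_\varepsilon$. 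You instead take a compactly supported bump vanishing exactly at the edges of the release annulus, obtained by gluing two constant-drift ODE profiles at a common critical point (drift $c$ on the increasing piece, $c+\tfrac1{R_1}$ on the decreasing one, so the curvature term $\tfrac1r\partial_r$ has the favorable sign on each monotone piece, and the $C^2$ match at $z^\ast$ avoids any adverse Dirac mass), running the comparison on the moving annulus; for $(ii)$ you bootstrap, using the annulus bound to control the boundary trace of the exponential subsolution $C_{12}\bar\Lambda e^{\eta(|x|-r_1-ct)}$ on the moving disc, where the inequality $\mu_s C_{12}\le e^{\eta(r_1-R_1)}$ indeed follows from $v\le 1/\mu_s$. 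What each approach buys: the paper's computation yields a fully explicit constant $\widehat{\mathrm{M}}$ with exactly the announced dependence on $c+\tfrac{1}{r_2}$, $r_1-R_1$, $R_2-r_2$, $\mu_s$; yours is more modular, dispenses with the $\varepsilon,a_\varepsilon$ gluing in case $(ii)$, and---notably---handles the initial ordering more cleanly, since the paper's Gaussian tails are positive for all $|x|$ while \eqref{hyp:init2} forces $M_s^0=0$ for $|x|>R_0^1$, so the paper's global comparison at $t=0$ strictly requires truncating the tails, whereas your subsolution is supported in $\{|x|\le R_2\}\subset\{|x|\le R_0^0\}$ where $M_s^0\ge\bar\Lambda/\mu_s\ge\bar\Lambda v$. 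Two small points to tighten if you write this up: the asserted monotonicity of each ODE piece deserves one line (with $w=v'$, one has $w''+cw'-\mu_s w=0$, which admits no interior positive maximum or negative minimum, so $w$ keeps one sign on each piece given $w(z^\ast)=0$); and your $C_{12}=\min_{[r_1,r_2]}v$ depends on $c+\tfrac1{R_1}$ and on the location of the glue point $z^\ast$, i.e.\ on slightly different parameters than those announced in the statement---harmless for how the lemma is used downstream, but worth noting.
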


\begin{proof}
The proof relies on the construction of a sub-solution for equation \eqref{equaMs}.
First, it is clear that $M_s^0$ verifies the inequalities announced.
\begin{itemize}
 \item[(i)] Let us introduce the function $m$ defined on $\RR$ by
$$
m(r) = \widehat{\mathrm{M}} 
\begin{cases}
e^{-a(r-r_1)^2}, & \qquad \text{ on } (-\infty,r_1),  \\
1, & \qquad \text{ on } (r_1,r_2), \\
e^{-b(r-r_2)^2}, & \qquad \text{ on } (r_2,+\infty),
\end{cases}
$$
for some constant $\widehat{\mathrm{M}}$ which will be fixed later.
Then, for $t>0$, $x\in\RR^2$, we define the function $m_s(x,t) = m(|x|-ct)$. Clearly, 
$m_s\geq \widehat{\mathrm{M}} \mathbf{1}_{\{r_1+ct<|x|<r_2+ct\}}$. 
We compute
\begin{align*}
&\partial_t m_s - \Delta m_s + \mu_s m_s 
 = - m'' - \left(c+\dfrac{1}{|x|}\right) m' + \mu_s m  \\
& = \widehat{\mathrm{M}}
\begin{cases}
    e^{-a(|x|-r_1-ct)^2} \left(\mu_s+2a(|x|-r_1-ct) \Big(c+\dfrac{1}{|x|}\Big)+2a-4a^2(|x|-r_1-ct)^2\right),  \text{ if } |x|<r_1+ct, \\[2mm]
    \mu_s, \hspace{10.5cm}  \text{ if } |x|\in (r_1+ct,r_2+ct), \\[2mm]
    e^{-b(|x|-r_2-ct)^2} \left(\mu_s+2b(|x|-r_2-ct) \Big(c+\dfrac{1}{|x|}\Big)+2b-4b^2(|x|-r_2-ct)^2\right),  \text{ if } |x|>r_2+ct. \\
\end{cases}
\end{align*}
For $|x|<r_1+ct$, we compute
$$
\mu_s+2a(|x|-r_1-ct) \Big(c+\dfrac{1}{r}\Big)+2a-4a^2(|x|-r_1-ct)^2 \leq 
\mu_s+2a-4a^2(|x|-r_1-ct)^2.
$$
In particular, for all $|x|\leq R_1+ct$, we have
$$
\mu_s+2a(|x|-r_1-ct) \Big(c+\dfrac{1}{|x|}\Big)+2a-4a^2(|x|-r_1-ct)^2 \leq 
\mu_s + 2a - 4a^2(R_1-r_1)^2.
$$
This right hand side is non-positive if $a \geq \dfrac{1+\sqrt{1+4(r_1-R_1)^2 \mu_s}}{4(r_1-R_1)^2}$. Moreover, if $\widehat{\mathrm{M}}\leq \dfrac{\bar{\Lambda}}{2a+\mu_s}$, we obtain the estimate, for all $|x|\leq r_1+ct$,
\begin{equation}
    \label{eqM1}
\widehat{\mathrm{M}}\left(\mu_s+2a(|x|-r_1-ct) \Big(c+\dfrac{1}{|x|}\Big)+2a-4a^2(|x|-r_1-ct)^2\right) \leq \bar{\Lambda} \mathbf{1}_{\{R_1+ct<|x|<r_1+ct\}}.
\end{equation}
For $|x|>r_2+ct$, we have, for all $t>0$,
\begin{align*}
&\mu_s+2b(|x|-r_2-ct) \Big(c+\dfrac{1}{|x|}\Big)+2b-4b^2(|x|-r_2-ct)^2   \\
& \qquad \leq \mu_s+2b(|x|-r_2-ct) \Big(c+\dfrac{1}{r_2}\Big)+2b-4b^2(|x|-r_2-ct)^2 = \mathrm{P}(|x|-r_2-ct),
\end{align*}
where $\mathrm{P}(X) = \mu_s + 2b + 2b\Big(c+\dfrac{1}{r_2}\Big) X - 4 b^2 X^2$. 
This polynomial is maximum for $X=\frac{1}{4b}(c+\frac{1}{r_2})$ with maximum value given by $2b+\mu_s+\frac{1}{4}(c+\frac{1}{r_2})^2$.
Hence, if 
\begin{equation}\label{condM2}
\widehat{\mathrm{M}} \leq \frac{\bar{\Lambda}}{2b+\mu_s+\frac 14 (c+\frac{1}{r_2})^2},
\end{equation} 
we deduce that $\widehat{\mathrm{M}} \mathrm{P}(|x|-r_2-ct)\leq \bar{\Lambda}$.

Then, for all $X>R_2-r_2$, we have $P(X)<P(R_2-r_2)$ provided $R_2-r_2>\frac{1}{4b}(c+\frac{1}{r_2})$, which is equivalent to $b>\frac{1}{4(R_2-r_2)}(c+\frac{1}{r_2})$. 
Moreover, we verify easily that $\mathrm{P}(R_2-r_2)\leq 0$ for any 
\begin{equation}\label{condb}
b\geq \frac{1}{4(R_2-r_2)^2}\left((c+\frac{1}{r_2})(R_2-r_2)+1+\sqrt{(1+(c+\frac{1}{r_2})(R_2-r_2))^2+4\mu_s(R_2-r_2)^2}\right).
\end{equation}
As a consequence, we have proved that when $b$ and $\widehat{\mathrm{M}}$ verify respectively \eqref{condb} and \eqref{condM2}, then for all $|x|\geq r_2+ct$,
\begin{equation}
    \label{eqM2}
\widehat{\mathrm{M}}\left(\mu_s+2b(|x|-r_2-ct) \Big(c+\dfrac{1}{|x|}\Big)+2b-4b^2(|x|-r_2-ct)^2\right) \leq \bar{\Lambda} \mathbf{1}_{\{r_2+ct<|x|<R_2+ct\}}.
\end{equation}
Combining \eqref{eqM1} and \eqref{eqM2} we see that 
$$
\partial_t m_s - \Delta m_s + \mu_s m_s \leq \Lambda.
$$
Hence, $m_s$ is a sub-solution for equation \eqref{equaMs}, which implies $M_s\geq m_s\geq \widehat{\mathrm{M}} \mathbf{1}_{\{r_1+ct<|x|<r_2+ct\}}$.
We conclude the proof of this first point by taking 
$$
\widehat{\mathrm{M}} = \bar{\Lambda}\min\left(\frac{1}{2b+\mu_s+\frac 14 (c+\frac{1}{r_2})^2},\frac{1}{2a+\mu_s}\right),
$$
with $a$ and $b$ chosen as above.

\item[(ii)] 
We proceed in the same way for the proof of the second point. We first fix $\varepsilon>0$ such that $\eta(r_1-R_1) = (1+\varepsilon)\ln(1+\varepsilon)$ and we define $a_\varepsilon = \frac{\eta}{2(1+\varepsilon)(r_1-R_1)}$.
Then, we introduce the function 
$$
m(r) = \widehat{\mathrm{M}} 
\begin{cases}
e^{\eta(r-R_1)}, & \qquad \text{ on } (-\infty,R_1),  \\
(1+\varepsilon) e^{-a_\varepsilon (r-r_1)^2} & \qquad \text{ on } (R_1,r_1),  \\
1+ \varepsilon, & \qquad \text{ on } (r_1,r_2), \\
(1+\varepsilon) e^{-b(r-r_2)^2}, & \qquad \text{ on } (r_2,+\infty),
\end{cases}
$$
for some constant $\widehat{\mathrm{M}}$ which will be fixed later.
With this choice of $\varepsilon$ and $a_\varepsilon$, we have $m\in C^1(\RR)$.
As above, we define $m_s(x,t) = m(|x|-ct)$ for $t>0$ and $x\in \RR^2$ and we notice that 
\begin{equation}
    \label{estim:ms2}
m_s(x,t)\geq \widehat{\mathrm{M}} \left(\mathbf{1}_{\{r_1+ct<|x|<r_2+ct\}} + e^{\eta(|x|-r_1-ct} \mathbf{1}_{\{|x|\leq r_1+ct\}} \right).
\end{equation}
We show that we may find constants $b$, and $\widehat{\mathrm{M}}$ such that $m_s$ is a sub-solution of \eqref{equaMs}.

For $|x|<R_1+ct$, we have 
\begin{align*}
\partial_t m_s - \Delta m_s + \mu_s m_s 
& = - m'' - \left(c+\dfrac{1}{|x|}\right) m' + \mu_s m  \\
& = \widehat{\mathrm{M}} e^{\eta(|x|-R_1-ct)} \left(\mu_s - \Big(c+\dfrac{1}{|x|}\Big) \eta - \eta ^2\right)  \\
& \leq \widehat{\mathrm{M}} \mu_s e^{\eta(|x|-R_1-ct)}.
\end{align*}

For $R_1+ct<|x|<r_1+ct$, we obtain
\begin{align*}
&\partial_t m_s - \Delta m_s + \mu_s m_s 
 = - m'' - \left(c+\dfrac{1}{|x|}\right) m' + \mu_s m  \\
& = \widehat{\mathrm{M}} (1+\varepsilon) e^{-a_\varepsilon(|x|-r_1-ct)^2} \left(\mu_s+2a_\varepsilon(|x|-r_1-ct) \Big(c+\dfrac{1}{|x|}\Big)+2a_\varepsilon-4a_\varepsilon^2(|x|-r_1-ct)^2\right)  \\
& \leq \widehat{\mathrm{M}} (1+\varepsilon) e^{-a_\varepsilon(|x|-r_1-ct)^2} \left(\mu_s+2a_\varepsilon\right)
\end{align*}

For $r_1+ct<|x|<r_2+ct$, we have
\begin{align*}
\partial_t m_s - \Delta m_s + \mu_s m_s  = \widehat{\mathrm{M}} (1+\varepsilon) \mu_s.
\end{align*}

We treat the domain $|x|>r_2+ct$ as in point (i). 

Finally, by taking $b$ verifying \eqref{condb} and 
$$
\widehat{\mathrm{M}} = \frac{\bar{\Lambda}}{1+\varepsilon} \min\left(\frac{1}{2b+\mu_s+\frac 14 (c+\frac{1}{r_2})^2},\frac{1}{2a_\varepsilon+\mu_s}\right),
$$
we deduce that 
$$
\partial_t m_s - \Delta m_s + \mu_s m_s  \leq \Lambda.
$$
Hence $m_s$ is a subsolution and we conclude thanks to estimate \eqref{estim:ms2}
\end{itemize}

\end{proof}

\section{Conclusion of the proof}\label{sec:final}
\subsection{Proof of Theorem \ref{MainTheorem}}

To summarize, under the assumptions of Theorem \ref{MainTheorem}, we have constructed a super-solution $(\barE,\barM,\barF,M_s)$ (see Lemma \ref{lem:Ms} and Proposition \ref{prop1}) and a sub-solution $(\underline{\mathbf{E}},\underline{\mathbf{M}},\underline{\mathbf{F}},\underline{\mathbf{M_s}})$ of system \eqref{syst} (see Proposition \ref{prop:subsolsyst}.
Thanks to the comparison principle (see Lemma \ref{lem:compar}), we have~: 
$$
\forall\, (t,x)\in \RR_+\times\RR^2, \quad (\underline{\mathbf{E}},\underline{\mathbf{M}},\underline{\mathbf{F}}) (t,x) \leq (E,M,F)(t,x) \leq (\barE,\barM,\barF)(t,x).
$$
Moreover, by construction, we have, for any $\underline{c}\leq c'\leq c$, on $\Omega_t^0=B_{r_1+c't}$,
$$
\|(\barE,\barM,\barF)(t,x)\| \leq C \alpha(t), 
$$
for some constant $C>0$ and with $\alpha$ decreasing towards $0$ (see Lemma \ref{lem:phi1}). 
This allows to conclude the proof of point (i) of the Theorem.

For the second point, we have from Proposition \ref{prop:subsolsyst} that $(\underline{\mathbf{E}},\underline{\mathbf{M}},\underline{\mathbf{F}})$ is a function in translation at constant speed $c>0$ and it verifies 
$$
\lim_{|x|\to +\infty} (\underline{\mathbf{E}},\underline{\mathbf{M}},\underline{\mathbf{F}})(t,x) = (E^*,M^*,F^*).
$$
This yields point (ii) of the Theorem.

\subsection{Proof of Corollary \ref{cor:hetero}}

Let us denote $(E_1,M_1,F_1)$ the solution of system \eqref{syst:E}--\eqref{syst:F} with $K=K_1$, and $(E_2,M_2,F_2)$ the solution of system \eqref{syst:E}--\eqref{syst:F} with $K=K_2$.
By the comparison principle, since $K_1\leq K(x) \leq K_2$, we deduce that on $\RR_+\times \RR^2$,
$$
(E_1,M_1,F_1)(t,x) \leq (E,M,F)(t,x) \leq (E_2,M_2,F_2)(t,x).
$$
Then, by applying Theorem \ref{MainTheorem} for $(E_1,M_1,F_1)$ and $(E_2,M_2,F_2)$ we obtained the desired result.

\subsection{Numerical illustrations}
We carry out two-dimensional numerical simulations to illustrate Theorem \ref{MainTheorem}. We consider a case without releases of sterile males (Figure \ref{fig:2d:withoutst})  and a case with releases \ref{fig:2d:withst}. In both cases, we use a finite element method implemented in the FreeFem software (see \cite{Freefem}). We discretize a ball of radius $R=45$ km by $210050$ elements. The parameters are the ones given in Table \ref{tab:parametre} with $\gamma=0.5$. The simulations are initialized with a local population distribution satisfying \eqref{hyp:init1}, which includes a central disk free of mosquitoes.

\begin{figure}[h!]
    \centering
    \begin{subfigure}[t]{0.19\textwidth}
        \centering
        \includegraphics[width=\linewidth]{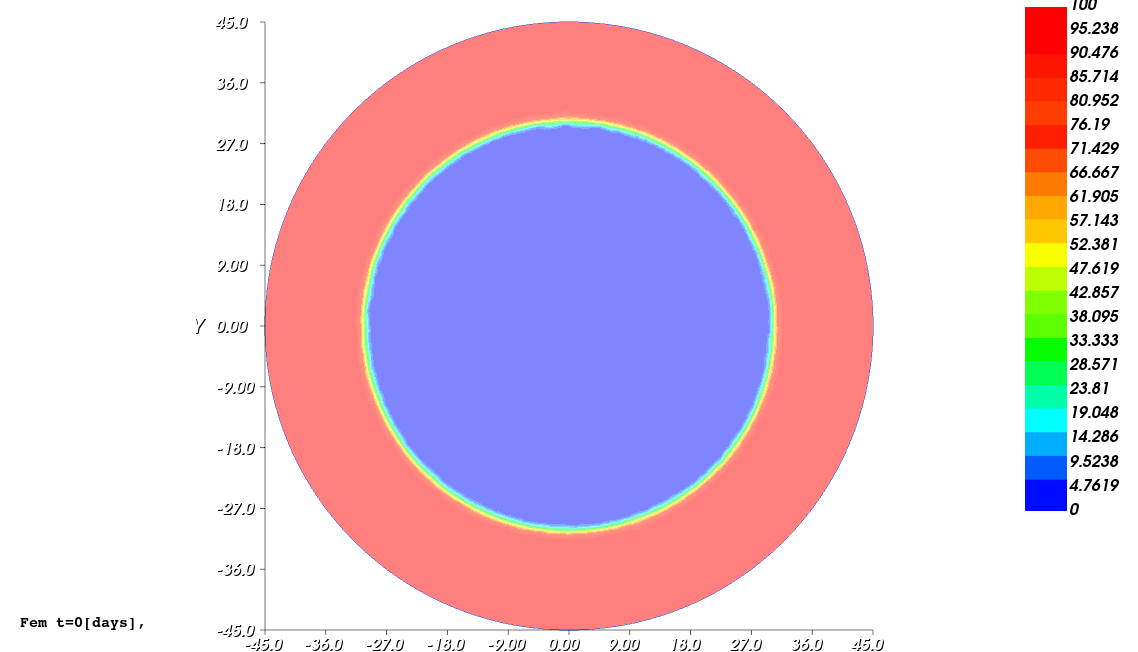}
        \caption{T = 0}
    \end{subfigure}
    \hfill
    \begin{subfigure}[t]{0.19\textwidth}
        \centering
        \includegraphics[width=\linewidth]{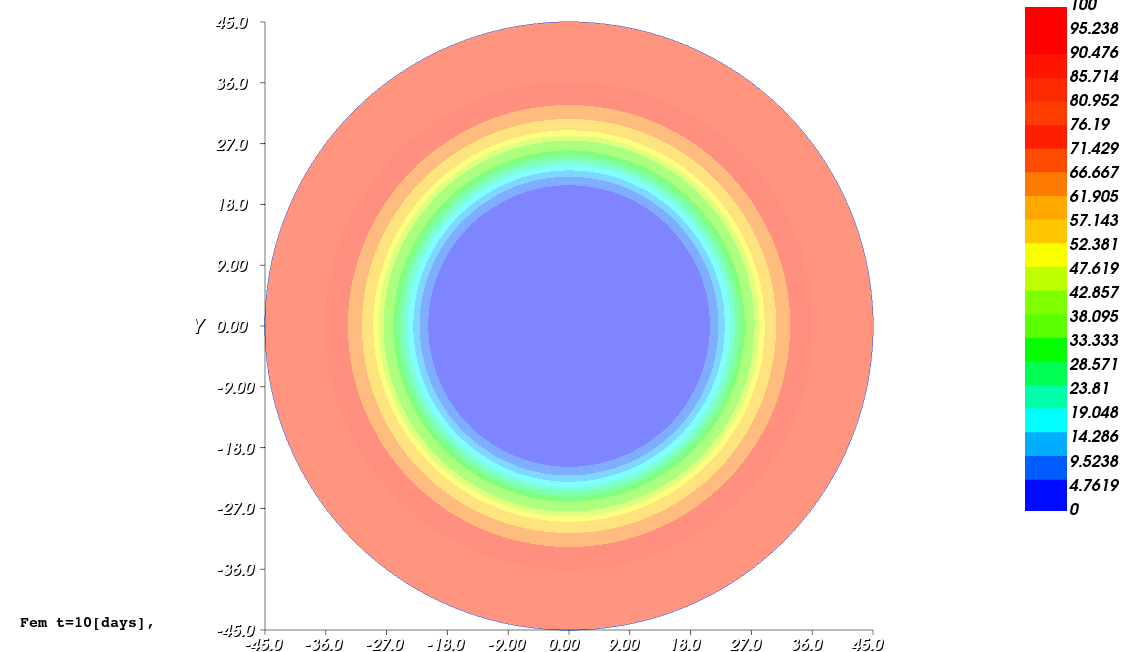}
        \caption{T = 10}
    \end{subfigure}
    \hfill
    \begin{subfigure}[t]{0.19\textwidth}
        \centering
        \includegraphics[width=\linewidth]{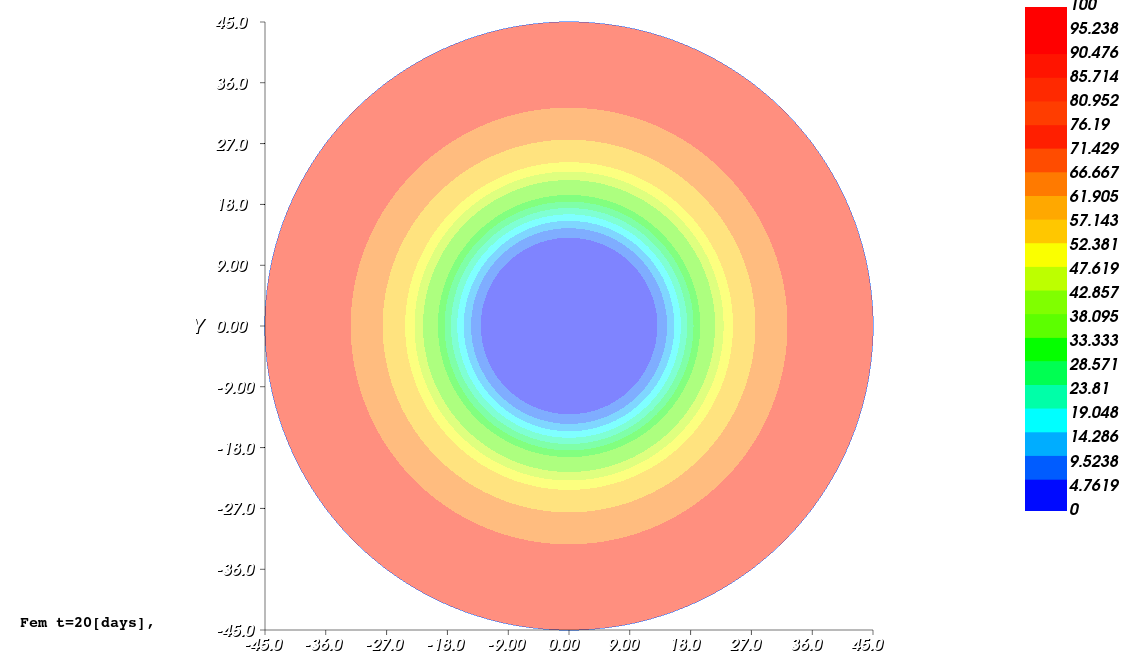}
        \caption{T = 20}
    \end{subfigure}
        \hfill
    \begin{subfigure}[t]{0.19\textwidth}
        \centering
        \includegraphics[width=\linewidth]{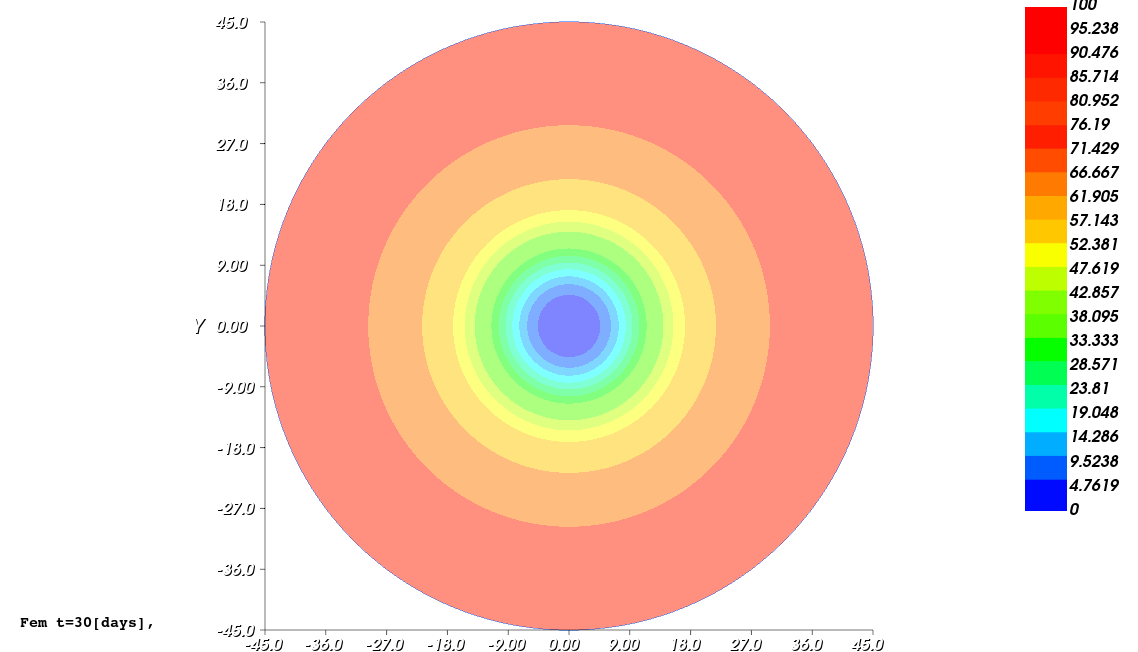}
        \caption{T = 30}
    \end{subfigure}
    \hfill
    \begin{subfigure}[t]{0.19\textwidth}
        \centering
        \includegraphics[width=\linewidth]{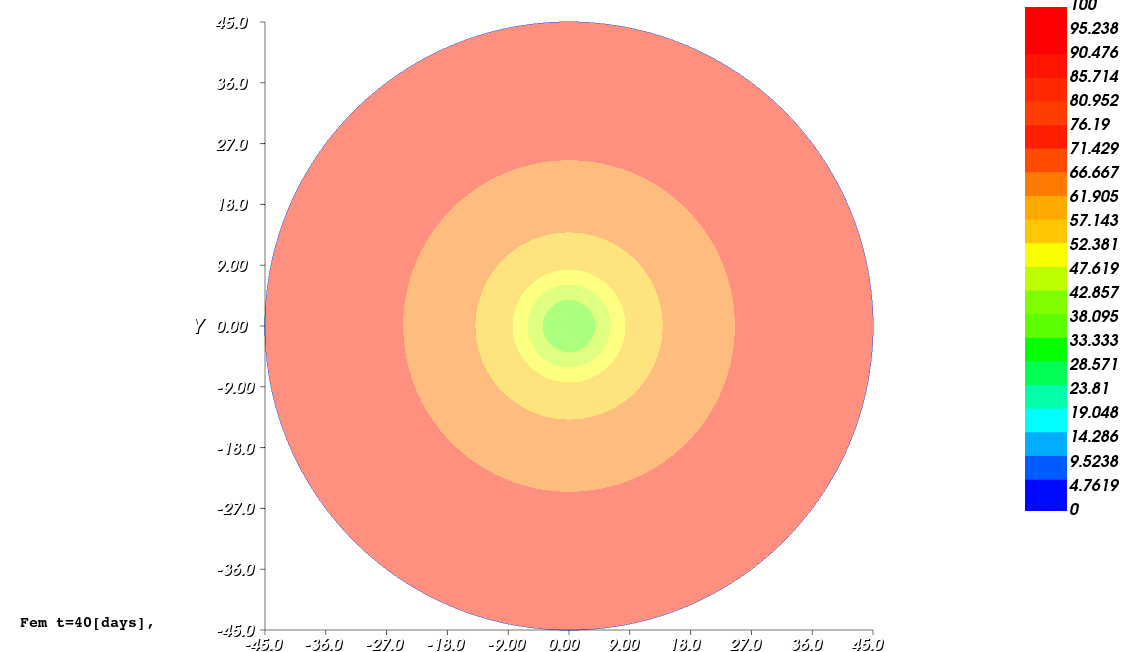}
        \caption{T = 40}
    \end{subfigure}
    \caption{Spatial distribution represented at different times T of female mosquitoes F, solution of \eqref{syst0} in a 2D homogeneous space without any releases of sterile males. Without control, we observe that mosquitoes are invading the central area.}
    \label{fig:2d:withoutst}
\end{figure}

With this choice of parameters, we have seen in Section \ref{sec:num1d} that without any intervention there is a natural invasion by the mosquito population. As expected, Figure~\ref{fig:2d:withoutst} shows that, in the absence of sterile males, the solution of the Cauchy problem \eqref{syst0} with initial data satisfying \eqref{hyp:init1} leads to the invasion of the central region by mosquitoes.

\begin{figure}[h!]
    \centering

    % Ligne 1
    \begin{subfigure}[t]{0.19\textwidth}
        \centering
        \includegraphics[width=\linewidth]{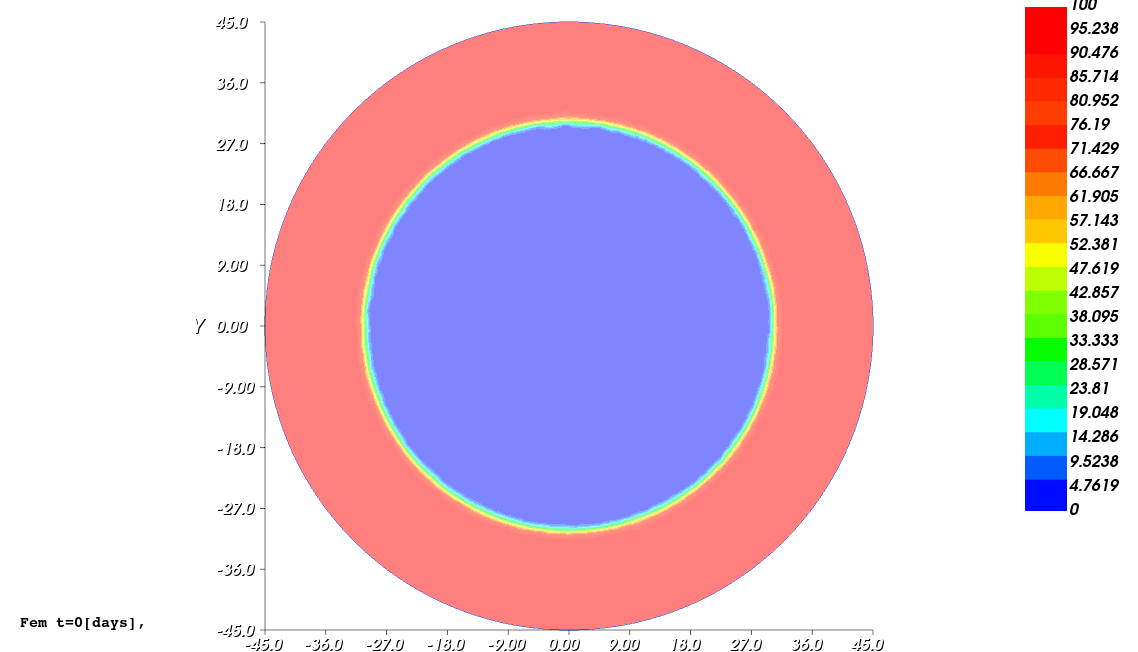}
    \end{subfigure}
    \hfill
    \begin{subfigure}[t]{0.19\textwidth}
        \centering
        \includegraphics[width=\linewidth]{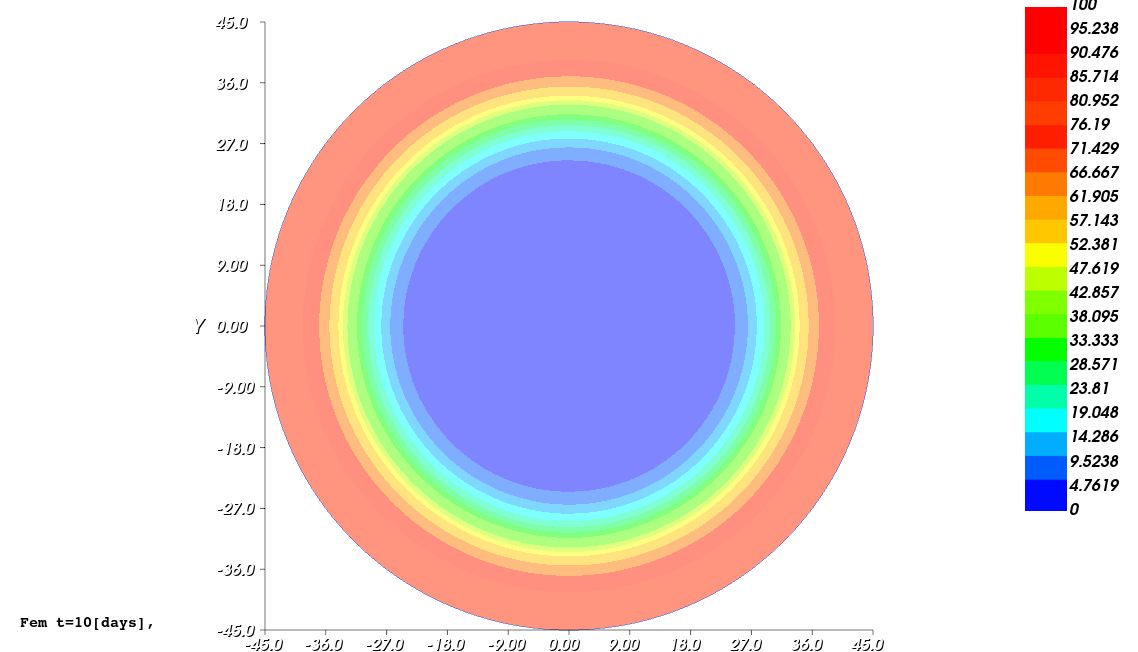}
    \end{subfigure}
    \hfill
    \begin{subfigure}[t]{0.19\textwidth}
        \centering
        \includegraphics[width=\linewidth]{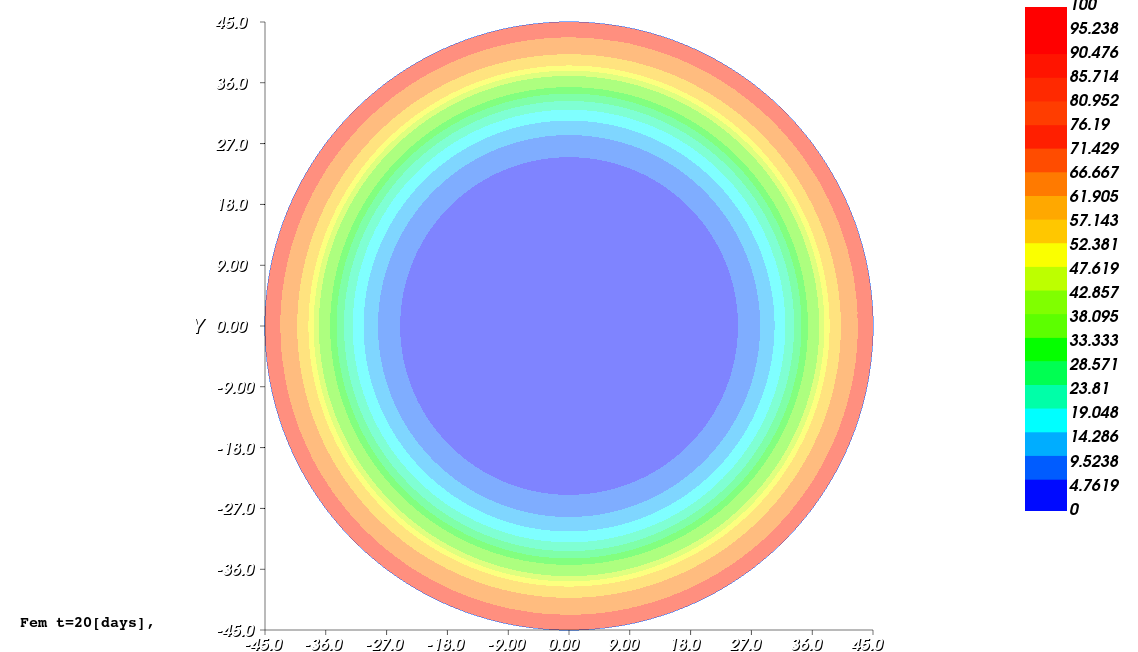}
    \end{subfigure}
    \hfill
    \begin{subfigure}[t]{0.19\textwidth}
        \centering
        \includegraphics[width=\linewidth]{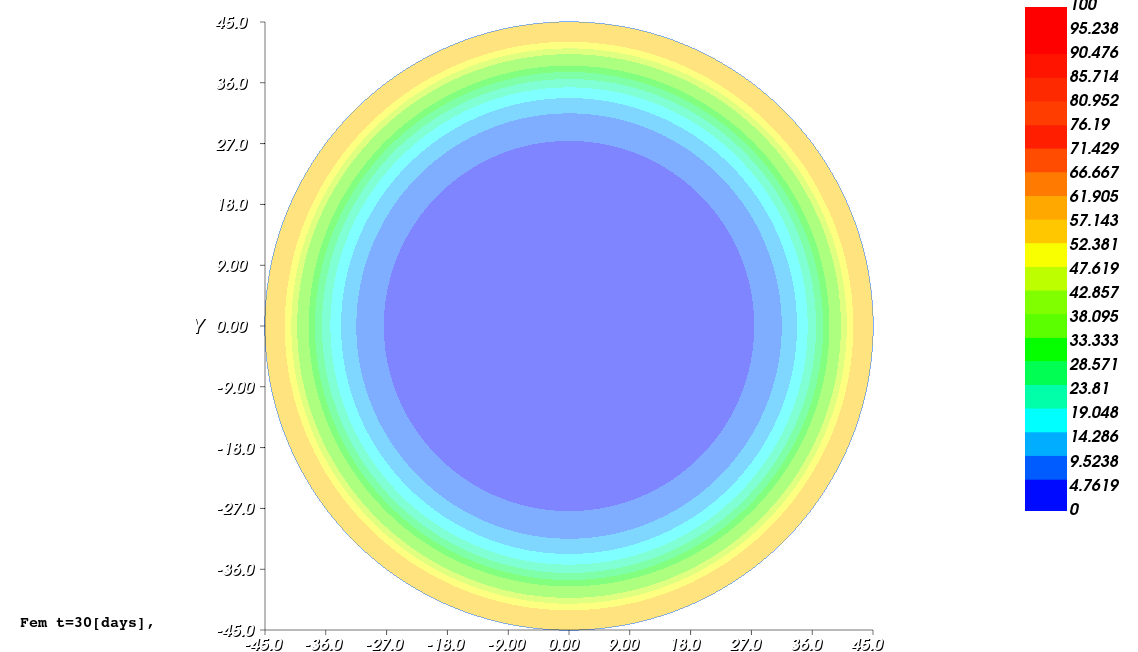}
    \end{subfigure}
           \hfill
    \begin{subfigure}[t]{0.19\textwidth}
        \centering
        \includegraphics[width=\linewidth]{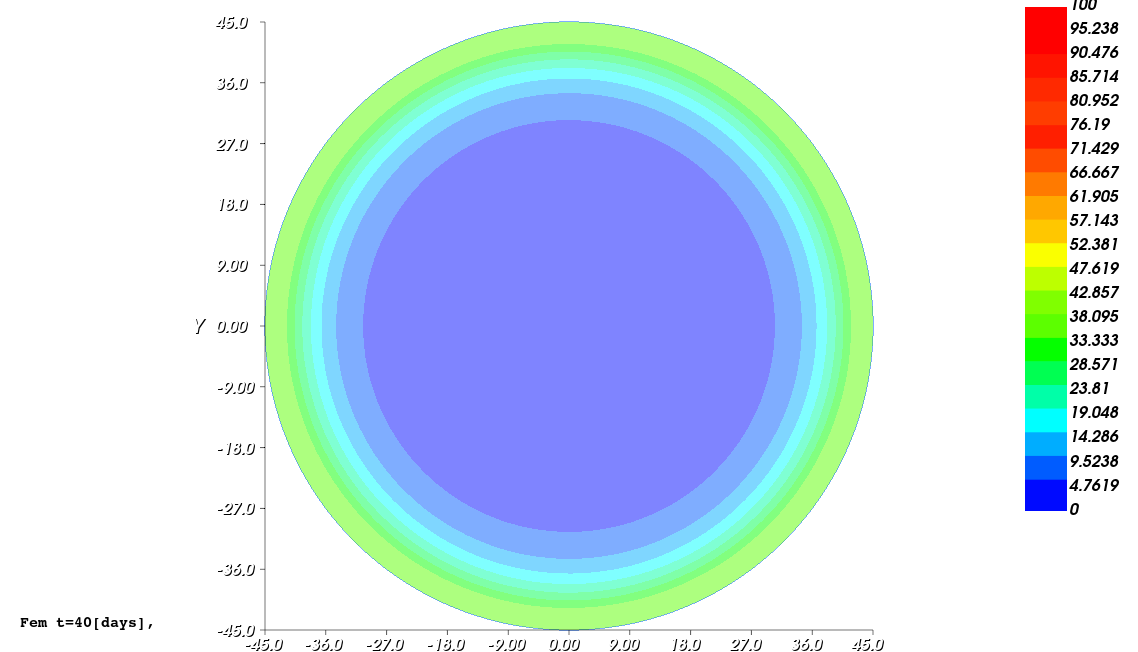}
    \end{subfigure}

    \vspace{1em} % Espace entre les deux lignes

    % Ligne 2
    \begin{subfigure}[t]{0.19\textwidth}
        \centering
        \includegraphics[width=\linewidth]{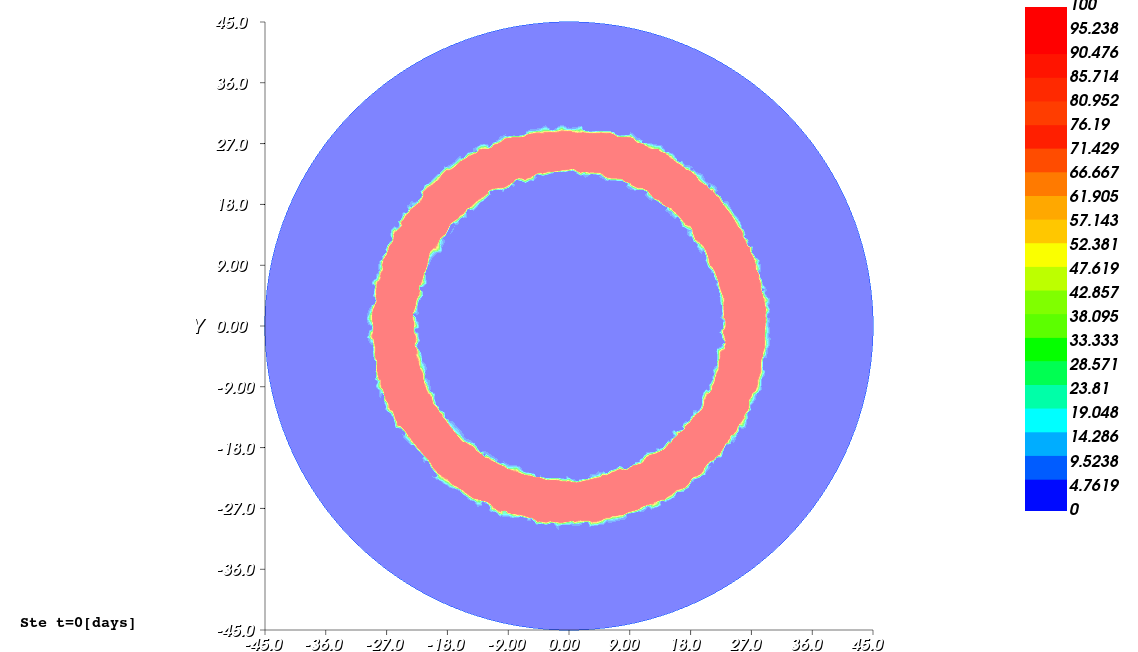}
        \caption{T = 0}
    \end{subfigure}
    \hfill
    \begin{subfigure}[t]{0.19\textwidth}
        \centering
        \includegraphics[width=\linewidth]{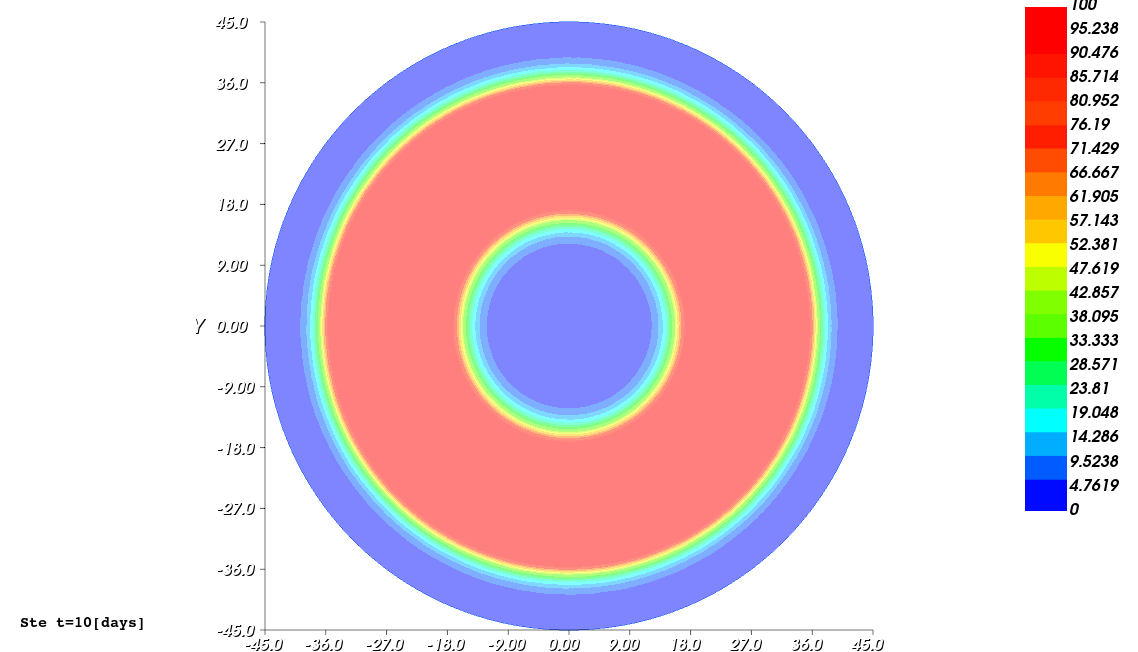}
        \caption{T = 10}
    \end{subfigure}
    \hfill
    \begin{subfigure}[t]{0.19\textwidth}
        \centering
        \includegraphics[width=\linewidth]{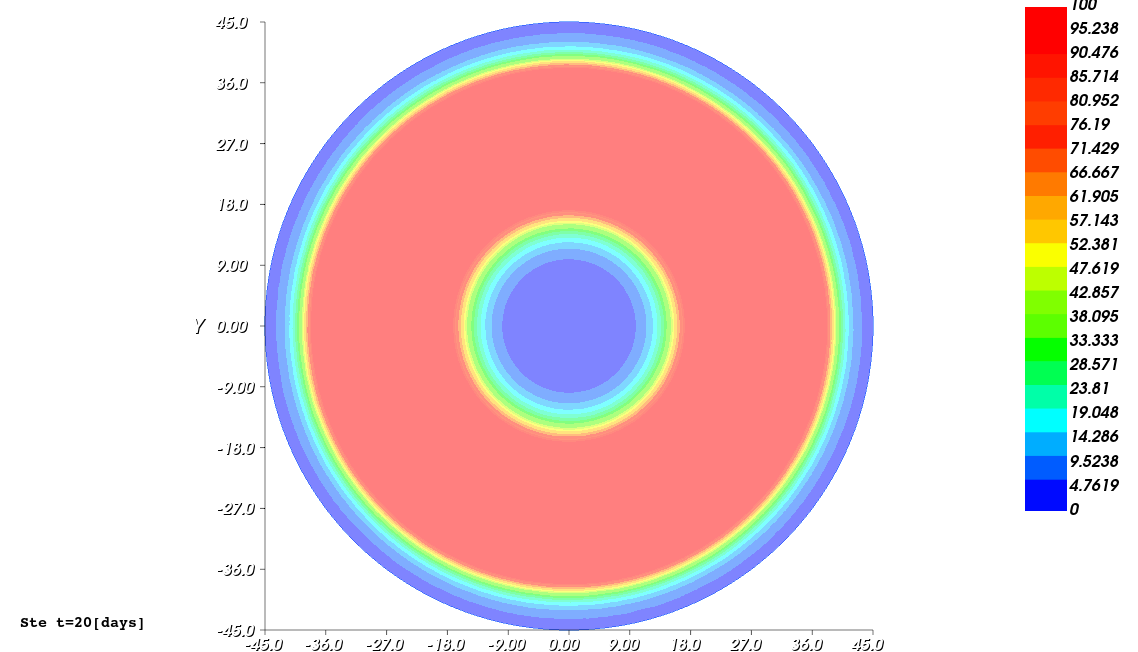}
        \caption{T = 20}
    \end{subfigure}
    \hfill
    \begin{subfigure}[t]{0.19\textwidth}
        \centering
        \includegraphics[width=\linewidth]{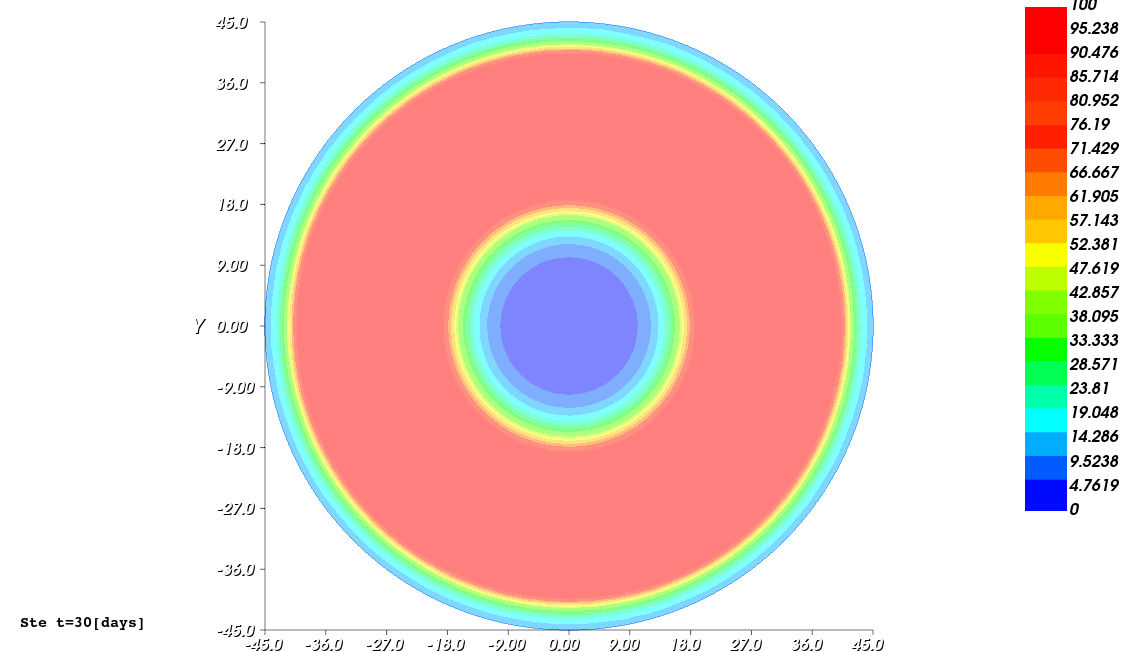}
        \caption{T = 30}
    \end{subfigure}
        \hfill
    \begin{subfigure}[t]{0.19\textwidth}
        \centering
        \includegraphics[width=\linewidth]{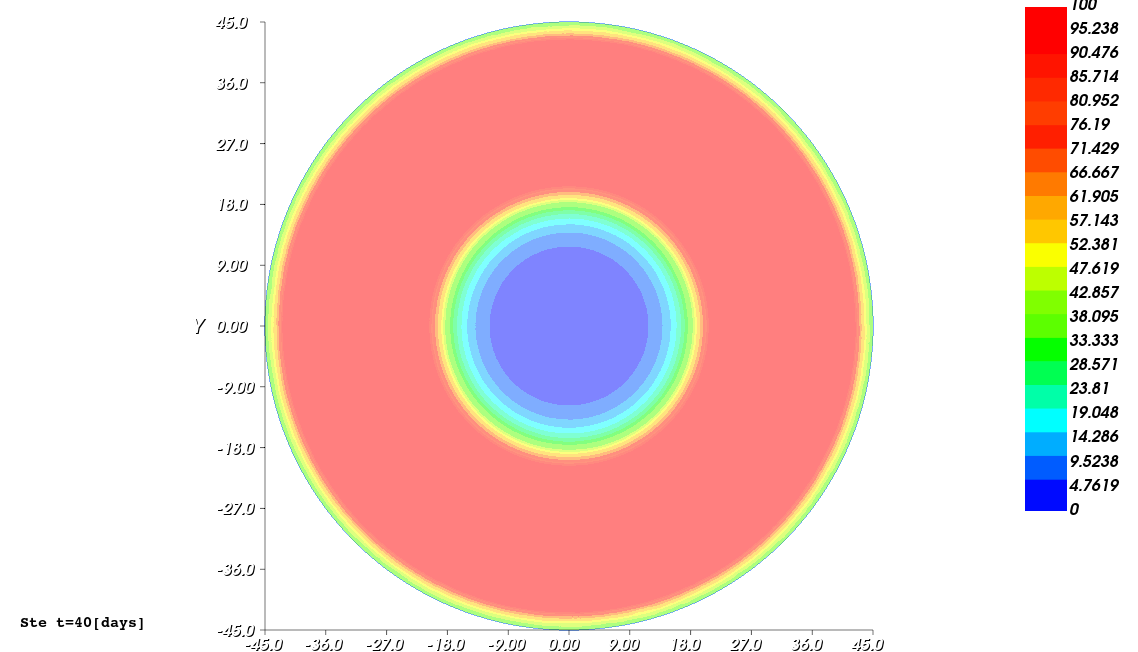}
        \caption{T = 40}
    \end{subfigure}

    \caption{Spatial distribution represented at different time T of female mosquitoes F (first line) and sterile males (second line) solutions of \eqref{syst0} in a 2D homogeneous space. With control, we observe extinction of the species in an expanding region.}
    \label{fig:2d:withst}    
\end{figure}

Then, Figure~\ref{fig:2d:withst} shows that the solution of the Cauchy problem \eqref{syst0}, with initial data satisfying \eqref{hyp:init1} and \eqref{hyp:init2}, and with a release of sterile males over time as in the statement of Theorem \ref{MainTheorem}, leads to a progressive decrease of the female population density to zero in an expanding region. In other words, the release of sterile mosquitoes allows us to enlarge the initial mosquito-free region, illustrating the success of the rolling carpet strategy in a two-dimensional domain.

\section*{Acknowledgments}
N.N. and N.V. acknowledge partial support from the STIC AmSud project BIO-CIVIP 23-STIC-02. A.L. acknowledges partial support from the ANR project “ReaCh” (ANR-23-CE40-0023-01). 
\noindent \begin{minipage}{0.10\textwidth}
	        \centering
            \includegraphics[width = \textwidth]{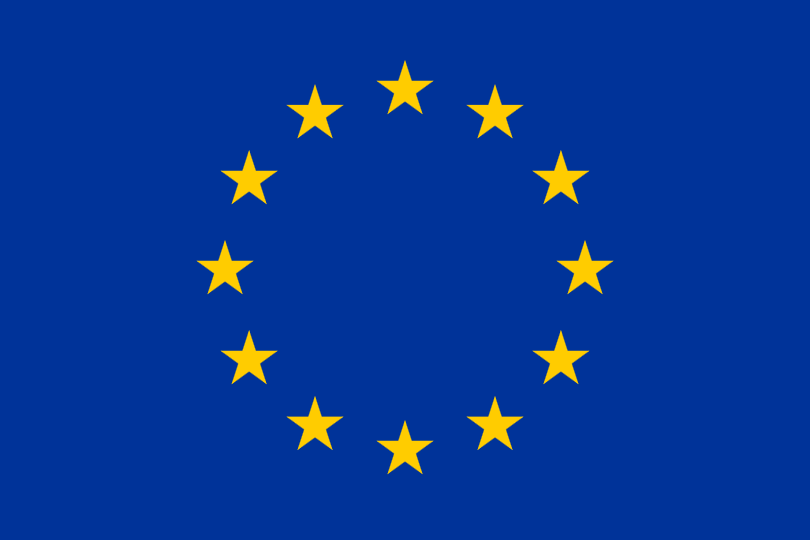} 
        \end{minipage}
        \hfill
        \begin{minipage}{0.85\textwidth}
            N.N. has received funding from the European Union's Horizon 2020 research and innovation program under the Marie Sklodowska-Curie grant agreement No 945332.
        \end{minipage}

\newpage
\appendix

\section{Appendix : Steady states in the bistable case}

This part is devoted to the proof of Lemma \ref{lem:stat} $(ii)$.
When $M_s=0$, we verify easily that the stationary solution $(E^*,M^*,F^*)$ satisfies
\begin{equation}\label{rel_stat1}
M^* = \frac{(1-\rho)\nu_E }{\mu_M} E^*, \qquad  E^*=\frac{b F^*}{\frac{b F^*}{K}+\mu_E+\nu_E}.
\end{equation}
We recall the definition of $\phi_0$:
$$
\phi_0(F) = \frac{(1-\rho) \nu_E b F}{\mu_M\frac{b F}{K}+\mu_M(\mu_E+\nu_E)}.
$$
Injecting into the stationary equation for $M$, we get $F^*=0$ or
\begin{equation}\label{rel_stat2}
\Gamma(\phi_0(F^*)) = \frac{\mu_F F^*}{\rho \nu_E K}+ \frac{1}{\mathcal{N}},
\end{equation}
where we recall that the basic offspring number $\mathcal{N}$ has been defined in \eqref{def:Nzeta}.

The condition $F^*=0$ gives the extinction equilibrium. The other equilibria (if they exist) are obtained by solving equation \eqref{rel_stat2}.
In order to solve this equation, we first notice that $\phi_0$ and $\Gamma$ are two increasing, continuous and concave functions on $\RR^+$. Hence, $F\mapsto \Gamma(\phi_0(F))$ is increasing, continuous, concave and bounded, whereas the right hand side of \eqref{rel_stat2} is affine~: we look for the intersection of an affine function with a concave function. Adding the fact that 
$\Gamma(\phi_0(0))=0 < \frac{1}{\mathcal{N}}$, we deduce that equation \eqref{rel_stat2} admits 0, 1 or 2 solutions (see Figure \ref{fig:Gamma}).
Moreover, in the case it has 2 solutions, denoted $F_1^*<F^*$, we have
\begin{equation}\label{rel_stat3}
\Gamma(\phi_0(F)) < \frac{\mu_F F}{\rho \nu_E K}+ \frac{1}{\mathcal{N}}, \text{ on } (0,F_1^*); \quad
\Gamma(\phi_0(F)) > \frac{\mu_F F}{\rho \nu_E K}+ \frac{1}{\mathcal{N}}, \text{ on } (F_1^*,F^*).
\end{equation}

\begin{figure}[ht]
\begin{center}
\begin{tikzpicture}[scale=1]
    %\draw[very thin,color=gray] (-0.1,-1.1) grid (3.9,3.9);
    \draw[->] (-0.1,0) -- (5,0) node[right] {$F$};
    \draw[->] (0,-0.1) -- (0,4.5);
    \draw[dashed,color=blue,domain=0:3.5] plot (\x,{\x+0.75});
    \draw[color=blue,domain=0:4.5]   plot (\x,{3.5*(1-exp(-0.5*\x))});   
    \draw[color=blue] (4,2.4) node {$\Gamma(\phi_0(F))$};
    \draw (0,-0.1) node[below left] {$0$};
\end{tikzpicture} 
\qquad
\begin{tikzpicture}[scale=1]
    %\draw[very thin,color=gray] (-0.1,-1.1) grid (3.9,3.9);
    \draw[->] (-0.1,0) -- (5,0) node[right] {$F$};
    \draw[->] (0,-0.1) -- (0,4.5);
    \draw[dashed,color=blue,domain=0:3.5] plot (\x,{\x+0.75});
    \draw[color=blue,domain=0:4.5]   plot (\x,{3.5*(1-exp(-\x))});   
    \draw[color=blue] (4,2.8) node {$\Gamma(\phi_0(F))$};
    \draw (0,-0.1) node[below left] {$0$};
    \draw (0.4,-0.1) node[below] {$F_1^*$};
    \draw[dotted] (0.4,-0.05) -- (0.4,1.1);
    \draw[color=blue] (0.4,1.1) node {$\bullet$};
    \draw (2.45,-0.1) node[below] {$F^*$};
    \draw[dotted] (2.45,-0.05) -- (2.45,3.18);
    \draw[color=blue] (2.45,3.17) node {$\bullet$};
\end{tikzpicture} 
\caption{Schematic representations of the function $\Gamma(\phi_0)$ and its intersections with the affine function (dotted line) defined by the right hand side of \eqref{rel_stat3} for small $\gamma$ (left) and for larger $\gamma$ (right). In the latter case there are two intersections $F_1^*<F^*$; moreover, since $\Gamma$ is increasing with respect to $\gamma$, we see clearly that the larger $\gamma$ is, the smaller $F_1^*$ and the larger $F^*$ are.}\label{fig:Gamma}
\end{center}
\end{figure}
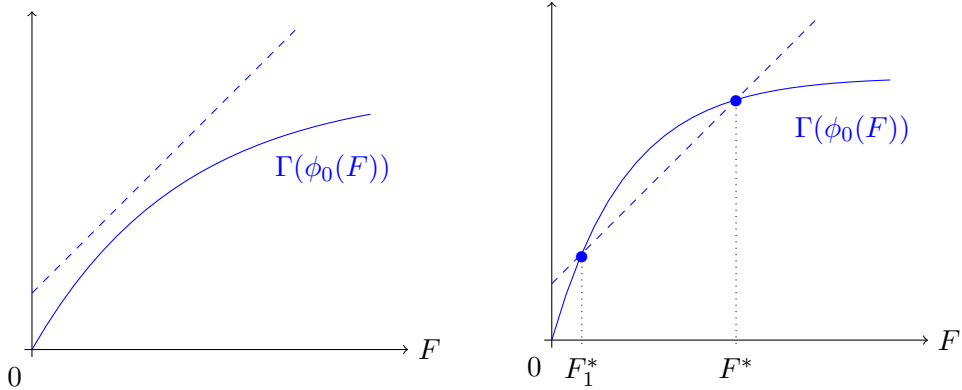

Setting $m=\dfrac{bF^*}{bF^*+K(\mu_E+\nu_E)}$, which is equivalent to $\dfrac{bF^*}{K} = \dfrac{(\mu_E+\nu_E) m}{1-m}$, and using the notation in \eqref{def:Nzeta}, we have $\phi_0(F^*) = \dfrac{m}{\zeta\gamma}$, and equation \eqref{rel_stat2} rewrites
\begin{equation}\label{eqapp:phi}
    \mathcal{N} (1-e^{-m/\zeta}) (1 - m) = 1.
\end{equation}

Let us denote $\varphi(m) := \mathcal{N} (1-e^{- m/\zeta}) \left(1 - m\right)$. 
We may verify easily that $\varphi(0) = 0$, $\varphi(1) = 0$ and $\varphi$ is concave on $(0,1)$.
As a consequence $\varphi$ admits a unique maximum on $(0,1)$ which is reached at point $m_0\in (0,1)$. The point $m_0$ is characterized by $\varphi'(m_0) = 0$, which is equivalent to 
\begin{equation}
    \label{eq:defm0}
1- m_0 = \zeta (e^{m_0/\zeta}-1).
\end{equation}
Notice that since the left-hand side is decreasing and the right-hand side increasing with respect to $m_0$, we deduce that for $m\in (0,1)$, we have 
\begin{equation}\label{eq:mm0}
m<m_0 \text{ if and only if } \zeta (e^{m/\zeta}-1) < 1- m.
\end{equation}
Since $\varphi$ is continuous and concave on $(0,1)$ and nonpositive elsewhere, equation \eqref{eqapp:phi} has a unique solution if and only if $\varphi(m_0) = 1$, and it has two positive solutions $m^*_-$ and $m^*_+$ such that $0<m^*_-<m_0<m^*_+$ if and only if $\varphi(m_0) > 1$ which is equivalent to
$$
\mathcal{N} (1-e^{-m_0/\zeta}) (1-m_0) > 1.
$$
With the relation \eqref{eq:defm0}, it implies
\begin{equation}
    \label{estimm0}
    \zeta \mathcal{N} (1-e^{-m_0/\zeta})(e^{m_0.\zeta}-1) > 1.
\end{equation}
Then,
$$
\zeta \mathcal{N} (e^{m_0/\zeta})^2 - (2\zeta \mathcal{N}+1) e^{m_0/\zeta} + \zeta \mathcal{N} >0.
$$
Solving this second order polynomial in $e^{m_0/\zeta}$ inequality, the latter inequality is equivalent (since $m_0>0$) to 
$$
e^{m_0/\zeta} > \frac{2\zeta \mathcal{N} + 1 + \sqrt{4\zeta \mathcal{N} + 1}}{2\zeta \mathcal{N}}.
$$
Let us denote 
$$
m^* := \zeta\ln \left(\frac{2\zeta \mathcal{N} + 1 + \sqrt{4\zeta \mathcal{N} + 1}}{2\zeta \mathcal{N}} \right).
$$
From \eqref{eq:mm0}, we deduce that $m^*<m_0$ if and only if
$$
\zeta (e^{m^*/\zeta}-1) < 1 - m^*.
$$
We conclude that there exist exactly two positive steady states if and only if
\begin{equation}
    \label{cond:stat}
\frac{1+\sqrt{4\zeta\mathcal{N}+1}}{2\mathcal{N}} < 1-\zeta \ln \left(\frac{2\zeta \mathcal{N} + 1 + \sqrt{4\zeta \mathcal{N} + 1}}{2\zeta \mathcal{N}} \right).
\end{equation}

We may verify that given $\mathcal{N}>1$, the function $\zeta \mapsto \frac{1+\sqrt{4\zeta \mathcal{N}+1}}{2\mathcal{N}}$ is increasing whereas $\zeta\mapsto 1-\zeta \ln (\frac{2\zeta \mathcal{N} + 1 + \sqrt{4\zeta \mathcal{N} + 1}}{2\zeta \mathcal{N}})$ is decreasing. 
Moreover, for $\zeta=0$ condition \eqref{cond:stat} reads $\mathcal{N}>1$ which is assumed to be satisfied.
Therefore defining $\zeta_c$ as in the statement of Lemma \ref{lem:stat},
we have that condition \eqref{cond:stat} is equivalent to $\zeta < \zeta_c$, or equivalently $\gamma > \gamma_c$ where 
$\gamma_c = \dfrac{\mu_M}{(1-\rho) \nu_E \zeta_c K}$.
Moreover, since $\Gamma$ is increasing with respect to $\gamma$, we deduce from \eqref{rel_stat3} that $F^*$ is increasing with respect to $\gamma$ and $F_1^*$ is decreasing with respect to $\gamma$.

Finally, there exists exactly one positive steady state if the inequality in \eqref{cond:stat} is an equality, i.e. $\zeta=\zeta_c$ or equivalently $\gamma=\gamma_c$.
In this case, we have on $(0,F^*)$ 
\begin{equation}\label{rel_stat4}
\Gamma(\phi_0(F)) < \frac{\mu_F F}{\rho \nu_E K}+ \frac{1}{\mathcal{N}}.
\end{equation}

We have studied the condition of existence of the three equilibria in the bistable case. The analysis of their stability is similar to the one in \cite[Lemma 3]{STR19} and is not reproduced here.

\bibliographystyle{plain} 
{\footnotesize
\bibliography{SIT}}
\end{document}